\newcounter{braid}
\newcounter{strands}
\DeclareMathAlphabet{\bsf}{OT1}{cmss}{bx}{n}
\def\cross{%
  \@ifnextchar^{\message{Got sup}\cross@sup}{\cross@sub}}
\def\cross@sup^#1_#2{\render@cross{#2}{#1}}
\def\cross@sub_#1{\@ifnextchar^{\cross@@sub{#1}}{\render@cross{#1}{1}}}
\def\cross@@sub#1^#2{\render@cross{#1}{#2}}
\def\render@cross#1#2{
  \def\strand{#1}
  \def\crossing{#2}
  \pgfmathsetmacro{\cross@y}{-\value{braid}*\braid@h}
  \pgfmathtruncatemacro{\nextstrand}{#1+1}
  \foreach \thread in {1,...,\value{strands}}
  {
    \pgfmathsetmacro{\strand@x}{\thread * \braid@w}
    \ifnum\thread=\strand
    \pgfmathsetmacro{\over@x}{\strand * \braid@w + .5*(1 - \crossing) * \braid@w}
    \pgfmathsetmacro{\under@x}{\strand * \braid@w + .5*(1 + \crossing) * \braid@w}
    \draw[braid] \pgfkeysvalueof{/tikz/braid start} +(\under@x pt,\cross@y pt) to[out=-90,in=90] +(\over@x pt,\cross@y pt -\braid@h);
    \draw[braid] \pgfkeysvalueof{/tikz/braid start} +(\over@x pt,\cross@y pt) to[out=-90,in=90] +(\under@x pt,\cross@y pt -\braid@h);
    \else
    \ifnum\thread=\nextstrand
    \else
     \draw[braid] \pgfkeysvalueof{/tikz/braid start} ++(\strand@x pt,\cross@y pt) -- ++(0,-\braid@h);
    \fi
   \fi
  }
  \stepcounter{braid}
}
\tikzset{braid/.style={double=\pgfkeysvalueof{/tikz/braid colour},double distance=1pt,line width=2pt,white}}
\newcommand{\braid}[2][]{%
  \begingroup
  \pgfkeys{/tikz/strands=2}
  \tikzset{#1}
  \pgfkeysgetvalue{/tikz/braid width}{\braid@w}
  \pgfkeysgetvalue{/tikz/braid height}{\braid@h}
  \setcounter{braid}{0}
  \let\sigma=\cross
  #2
  \endgroup
}
\newtheorem{theorem}{Theorem}
\newtheorem{proposition}[theorem]{Proposition}
\newtheorem{lemma}[theorem]{Lemma}
\newtheorem{definition}[theorem]{Definition}
\def\Z{\mathbb{Z}}
\def\C{\mathbb{C}}
\def\C{\mathbb{C}}
\def\N{\mathbb{N}}
\def\F{\mathbb{F}}
\def\md{\mathcal{D}}
\def\Zpk{\mathbb{Z}/p^{k}}
\def\Zpk1{\mathbb{Z}/p^{k-1}}
\newcommand{\rref}[1]{(\ref{#1})}
\newcommand{\beg}[2]{\begin{equation}\label{#1}#2\end{equation}}
\def\F{\mathbb{F}}
\def\sl2{\widetilde{SL_{2}(\Z)}}
\def\md
\def\rank{\operatorname{rank}}
\title[]{Howe duality over finite fields II: explicit stable computation}
\author{Sophie Kriz}
\thanks{The author was supported by a 2023 National Science Foundation
Graduate Research Fellowship, no. 2023350430}
\begin{document}

\maketitle
\vspace{-10mm}

\begin{abstract}
In this second paper of a series dedicated to type I Howe duality for finite fields,
we explicitly describe the eta and zeta correspondences constructed
in the first paper in terms of G. Lusztig's
parametrization of the irreducible characters of finite groups of Lie type
in the two so-called stable ranges. This identifies the stable eta and zeta correspondences
among the pairs of irreducible representations whose occurence with non-zero multiplicity
in the type I Howe duality correspondence was proved by S.-Y. Pan.
\end{abstract}

\vspace{-7mm}

\tableofcontents

\section{Introduction}

This is the second paper of a series dedicated to Howe duality for finite fields.
This refers to the question of how an
oscillator representation, which, over a finite field $\F_q$ (for
$q$ a power of an odd prime), forms a representation
of a symplectic group, decomposes when restricted
to a reductive dual pair, which consists of a two subgroups in the symplectic group
which are each other centralizers.
We shall continue to specifically
study the restriction of an oscillator representation to a {\em type I reductive dual pair},
consisting of a symplectic group $\text{Sp} (V)$ and an orthogonal group $\text{O} (W,B)$,
considered as subgroups of the symplectic group $\text{Sp} (V\otimes W)$
(in which $\text{Sp} (V) \times \text{O}(W,B)$ is embedded by the tensor product). 
As in the first paper of this series \cite{TotalHoweI},
we shall continue to focus on the so-called stable ranges.

The finite field context for the oscillator representation was introduced in \cite{HoweFiniteFields} by R. Howe. The first development towards determining the decomposition of its restriction to a reductive dual pair was made by J. Adams and A. Moy \cite{AdamsMoy}, who proved that 
a unipotent cuspidal representation is always tensored with another unipotent cuspidal representation
when it first occurs in some restricted oscillator representation.
This result was used by A.-M. Aubert, J. Michel, and R. Rouquier to 
form a conjecture on the behavior of the {\em unipotent} part of the restriction of an oscillator representation to a type I dual pair, based
on a decomposition they proved for the type II dual pairs
(which consist of a pair of general linear or unitary groups).

Further progress was made by S. Gurevich and R. Howe \cite{HoweG, HoweGBook}
in a different direction, considering the full oscillator representation rather than the unipotent part.
In a certain {\em stable range} of dual pairs,
they constructed a one-to-one correspondence pairing the irreducible
representations of an orthogonal group with a newly occuring irreducible representation of each
symplectic group in the stable range. They called this the {\em eta correspondence}
and constructed it using a concept of {\em rank}
which plays a role in certain dynamical questions about the representation theory
of finite groups of Lie type. 

Finally, in \cite{Pan1}, S.-Y. Pan proved the type I conjecture of Aubert, Michel, and Rouquier
by passing through a process of {\em uniform projection} to linear combinations of
Deligne-Lusztig virtual characters. 
Pan's calculation of the uniform projection of the oscillator representation's character was also used
by D. Liu and Z. Wang \cite{LiuWang} to extend the results of Adams
and Moy and describe, roughly speaking,
the behavior of the Howe correspondence with an odd orthogonal group on the
unipotent cuspidal representations of the symplectic groups.
Later, by proving a compatibility with Lusztig's parametrization of irreducible characters, Pan \cite{Pan2} classified which tensor products of irreducible representations appear with non-zero multiplicity in the restricted oscillator representation.

\vspace{2mm}

In \cite{TotalHoweI}, we constructed explicit correspondences (called the
eta and zeta correspondences, where the eta correspondence was previously defined
by S. Gurevich and R. Howe \cite{HoweG, HoweGBook}) between the sets of 
representations on the symplectic and orthogonal side in the two 
stable ranges. In this paper, we shall describe these correspondences
explicitly in terms of G. Lusztig's parametrization of 
the irreducible representations of finite groups of Lie type
(see, for example, \cite{Lusztig}).
This also tells us how the eta and zeta correspondences
fit into the list of tensor products of pairs of representations occuring with non-zero
multiplicity as identified by S.-Y. Pan \cite{Pan2}, although Pan uses a modified notation,
with which the precise dictionary will be given in the third paper of this series
(where we will also be able to discuss cases outside of the stable range).

To be more specific, consider an oscillator representation $\omega [V\otimes W]$
of a symplecitc group $\text{Sp}(V\otimes W)$
restricted to a type I reductive dual pair $(\text{Sp}(V), \text{O}(W,B))$ along the tensor
product inclusion
\beg{RedDualPairKronInc}{\text{Sp} (V) \times \text{O}(W,B) \hookrightarrow \text{Sp} (V\otimes W).}
Let us write $\widehat{G}$ for the set of irreducible complex representations of a finite group $G$.
In the first
part of this series \cite{TotalHoweI}, we proved that for pairs in the {\em symplectic stable range}
defined by the condition $\text{dim} (W) \leq \text{dim} (V)/2$,
the restriction of the oscillator representation
along \rref{RedDualPairKronInc} decomposes in terms of (twisted)
Harish-Chandra induced modules (i.e. parabolic inductions)
and a system of injections with mutually disjoint images
\beg{EtaCorrIntro}{\eta^V_{W,B}: \widehat{\text{O}(W,B)} \hookrightarrow \widehat{\text{Sp}(V)}.} 
Similarly, for $(\text{Sp}(V), \text{O}(W,B))$ in the {\em orthogonal stable range}
defined by the condition that $\text{dim} (V)$ is less than
or equal to the dimension of the maximal isotropic subspace
of $W$, the restriction of $\omega [V\otimes W]$ along \rref{RedDualPairKronInc} decomposes
in terms of (twisted) Harish-Chandra induced modules and a system of injections with mutually
disjoint images
\beg{ZetaCorrIntro}{\zeta^{W,B}_V: \widehat{\text{Sp}(V)} \hookrightarrow \widehat{\text{O}(W,B)}.}
We omit subcripts from the notation
of \rref{EtaCorrIntro}, \rref{ZetaCorrIntro} when the source is already established. 

For a
subgroup $H\subseteq G$, we will use the standard notation $\text{Ind}^G_H$
for the induction of an $H$-representation to $G$
and $\text{Res}^G_H$ for the restriction of a $G$-representation to $H$.
When $G$ or $H$ is established by the
context and there is no amiguity, we may omit the superscript or subscript from the notation.

The main result of \cite{TotalHoweI} is

\begin{theorem}\label{TotalHoweITheorem}
Let $V$ be a $2N$-dimensional symplectic space and let $W$ be an $n$-dimensional
space with symmetric bilinear form $B$. Write $h_W$ for the maximal dimension of an isotropic subspace
of $W$. Consider $(\text{Sp} (V), \text{O} (W,B))$ as a reductive dual pair in $\text{Sp} (V\otimes W)$.

\begin{enumerate}

\item If $(\text{Sp}(V), \text{O}(W,B))$ is in the symplectic stable range (meaning $\text{dim} (W) \leq
\text{dim} (V)/2$), then
there exists a system of mutually disjoint injections $\eta^V_{W,B}$ of the form \rref{EtaCorrIntro}, 
the restriction of $\omega [ V\otimes W]$ to $\text{Sp}(V) \times \text{O}(W,B)$ decomposes as
\beg{EtaCorrThmDecomp}{
\bigoplus_{k=0}^{h_W}
\bigoplus_{\rho \in \widehat{\text{O}(W[-k], B[-k])}} \eta^V (\rho) \otimes \text{Ind}_{P_k^B} (\rho \otimes \epsilon (det))
}
where $\text{Ind}_{P_k^B}$ denotes parabolic induction from the maximal parabolic $P_k^B$
in $\text{O}(W,B)$ whose Levi factor is $\text{O}(W[-k], B[-k]) \times \text{GL}_k (\F_q)$.
We consider
$\rho \otimes \epsilon (det)$ as a representation of this Levi subgroup by considering $\epsilon (det)$ as
a representation of $\text{GL}_k(\F_q)$.

\vspace{2mm}

\item If $(\text{Sp}(V), \text{O}(W,B))$ is in the orthogonal stable range (meaning $\text{dim} (V) \leq h_W$), then there exists a system
of mutually disjoint injections $\zeta^{W,B}_V$ of the form \rref{ZetaCorrIntro},
the restriction of $\omega [ V\otimes W]$ to $\text{Sp}(V) \times \text{O}(W,B)$ decomposes as
\beg{ZetaCorrThmDecomp}{
\bigoplus_{k=0}^{h_W}
\bigoplus_{\rho \in \widehat{\text{Sp}(V[-k])}}  \text{Ind}_{P_k^V} (\rho \otimes \epsilon (det)) \otimes \zeta^{W,B} (\rho)
}
where $\text{Ind}_{P_k^V}$ denotes parabolic induction from the maximal parabolic $P_k^V$
in $\text{Sp}(V)$ with Levi factor $\text{Sp}(V[-k]) \times \text{GL}_k (\F_q)$.
We consider
$\rho \otimes \epsilon (det)$ a a representation of this Levi subgroup by considering $\epsilon (det)$ as
a representation of $\text{GL}_k(\F_q)$.

\end{enumerate}
\end{theorem}

\vspace{3mm}

To state the main result of this paper, we must briefly recall
the classification of the representations symplectic and orthogonal groups, obtained from
Lusztig's parametrization of irreducible characters \cite{Lusztig}.
For a finite group of Lie type $G$, we denote by $G^*$ the dual of $G$ (see \cite{DeligneLusztig}).
Most generally, we consider the data of 
\begin{itemize}
\item The conjugacy class of a semisimple element $s$ in the dual group $G^*$.

\item A unipotent representation $u$ of the dual $(Z_{G^*} (s)^\circ )^*$ of the 
identity component of the centralizer of $s$ in $G^*$.

\end{itemize}
Here by the identity component $G^\circ$, we mean the group of elements of $G$ which are points of the identity component  (in the Zariski topology)  of the corresponding group over the algebraic closure of the ground field.
The data $[(s), u]$ corresponds to a representation of $G$
(irreducible when $(Z_{G^*} (s))^\circ = Z_{G^*} (s)$ but not necessarily otherwise)
which 
we denote by $r^G[(s), u]$ of dimension equal to the dimension
of $u$ multiplied by the prime to $q$ part of the quotient order
$|G^*/ Z_{G^*} (s)^\circ|$. Intuitively, $r^G[(s),u]$ can be thought of
as a ``faked parabolic induction" of $u$, noting that over a finite field,
there are many cases of $s$ where there is no actual
maximal parabolic with Levi factor
$Z_{G^*} (s)^\circ$. 

In each case of $G$, for every irreducible representation $\rho \in \widehat{G}$, there exists
a unique choice of data $[(s), u]$ as described above, such that $\rho \subseteq r^G[(s),u]$.
If $G$ has a connected center,
e.g. in the case of the odd special orthogonal groups
$G = \text{SO}_{2m+1} (\F_q)$, the representations $r^{G}[(s), u]$ are irreducible, and therefore
the data of a semisimple conjugacy class and a unipotent representation parametrize the irreducible
representations of $G$. In this case, we call the data $[(s), u]$ the {\em $G$-classification data}
corresponding to an irreducible representation $\rho = r^G[(s),u]$.

However, if $G$ has a disconnected center, $r^G[(s), u]$ may split further.
For example, consider the case of a symplectic group
$G= \text{Sp}_{2N} (\F_q)$ which has center $\Z/2$.
In this case, a representation $r^{\text{Sp}_{2N} (\F_q)}[(s),u]$ turns out to split if and only
if $s$ has $-1$ eigenvalues (and the coressponding factor of $u$ is non-degenerate). In this case, $r^{\text{Sp}_{2N} (\F_q)}[(s), u]$ turns out to
split into two irreducible
non-isomorphic pieces
$$r^{\text{Sp}_{2N} (\F_q)} [(s), u] = r^{\text{Sp}_{2N} (\F_q)}[(s), u , +1] \oplus 
r^{\text{Sp}_{2N} (\F_q)}[(s), u , -1].$$
Both pieces are of dimension equal to exactly half of the dimension of $r^{\text{Sp}_{2N} (\F_q)}[(s), u]$.
In this case, we call the
data $[(s), u, \pm 1]$ the {\em $\text{\em Sp}_{2N} (\F_q)$-classification data} corresponding
to an irreducible representation $\rho = r^{\text{Sp}_{2N} (\F_q)}[(s), u, \pm 1]$
and call the sign $\pm 1$ its {\em central sign}.
If $s$ has no $-1$ eigenvalues,
then as before, we call $[(s), u]$ the {\em $\text{\em Sp}_{2N} (\F_q)$-classification data} corresponding to
the irreducible representation $\rho = r^{\text{Sp}_{2N} (\F_q)}[(s), u]$.

A more complicated effect occurs for even orthogonal groups
$G= \text{O}_{2m}^\pm (\F_q)$. We discuss this case
by inducing the situation for $\text{SO}_{2m}^\pm (\F_q)$, considering the representations
$\text{Ind}_{\text{SO}_{2m}^\pm (\F_q)}^{\text{O}_{2m}^\pm (\F_q)} (r^{\text{SO}_{2m}^\pm (\F_q)} [(s),u])$, which may decompose into one, two, or four
distinct irreducible summands, depending on the eigenvalues of $s$. This effect can also be
interpreted according to certain ``sign data",
alongside the data of $(s), u$,
which we call the {\em $\text{\em O}_{2m}^\pm (\F_q)$-extended classification data} associated to an irreducible representation.
We discuss this in Section \ref{BackgroundSect}.

\vspace{3mm}

Using this description, we construct a system of disjoint injections
$$\phi^V_{W,B} : \widehat{\text{O}(W,B)} \hookrightarrow \widehat{\text{Sp}(V)}$$
$$\psi^{W,B}_V : \widehat{\text{Sp}(V)} \hookrightarrow \widehat{\text{O}(W,B)}$$
roughly defined by altering the semisimple part $(s)$ of the input representation's
classification data by
adding $-1$ eigenvalues if $W$ is odd-dimensional
and adding $1$ eigenvalues if $W$ is even-dimensional;
the unipotent part of the data is altered
by concatenating a single coordinate to the symbol corresponding to the factor
of $u$ corresponding to the altered eigenvalues $s$ to achieve
the needed new rank and defect. (Depending on the case of $(\text{Sp} (V), \text{O} (W,B))$,
there may be a choice of how to
add eigenvalues and where to concatenate the new coordinate to the correpsponding symbol,
which is determined according to the central action
of the input representation.) The purpose of
Section \ref{LusztigClaimSect} is to describe this construction.

\vspace{3mm}

The main result of this paper is that these explicit constructions in fact describe
the eta and zeta correspondences of Theorem \ref{TotalHoweITheorem}.
\begin{theorem}\label{EtaExplicitIntro}
Consider a reductive dual pair $(\text{Sp} (V), \text{O}(W,B))$ in the group $\text{Sp} (V\otimes W)$.
\begin{enumerate}
\item\label{SympStabSide}
Suppose
$(\text{Sp}(V), \text{O}(W,B))$ is in the symplectic
stable range (meaning $\text{dim} (W) \leq \text{dim} (V)/2$). Then
$$\eta^V_{W,B} = \phi^V_{W,B}.$$

\vspace{1mm}

\item\label{OrthoStabSide}
Suppose $(\text{Sp}(V), \text{O}(W,B))$ is in $\text{Sp}(V\otimes W)$ in the orthogonal
stable range (meaning $\text{dim} (V)$ is less than or equal to the maximal
dimension of a $B$-isotropic subspace of $W$). Then
$$\zeta^{W,B}_V = \psi^{W,B}_V.$$

\end{enumerate}

\end{theorem}

\noindent {\bf Remark:}
In the case of $(\text{Sp}(V), \text{O}(W,B))$ in the symplectic or orthogonal stable range, the 
decomposition we have now computed
can be used to recover S.-Y. Pan's results \cite{Pan1, Pan2} classifying
the pairs of irreducible representations of symplectic and orthogonal groups whose tensor
product appears with non-zero multiplicity in the restricted oscillator representation
$\text{Res}_{\text{Sp}(V) \times \text{O}(W,B)} (\omega [ V\otimes W])$. 
We will in fact be explicitly calculating
in our proof of Theorem \ref{EtaExplicitIntro} that each of Pan's predicted pairs
appears with multiplicity exactly one, and the resulting dimension sum adds up to
$q^{\text{dim} (V)\cdot \text{dim} (W)/2} = \text{dim} (\omega[ V\otimes W])$.

In comparison with Pan's description of the appearing pairs of irreducible representations,
our organization of the summands in terms of systems of one-to-one functions
between sets of irreducible representations of symplectic and orthogonal groups
fulfills the program of a finite field Howe duality (as originally
proposed by Howe in \cite{HoweFiniteFields, HoweKobayashi, HoweG, HoweGBook}).
We shall explicitly compare our decomposition with Pan's result in the upcoming paper \cite{TotalHoweIII},
where we treat the restricted oscillator representation's decomposition for general $(\text{Sp} (V), 
\text{O} (W,B))$.

\vspace{3mm}

The main tool used to prove Theorems \ref{EtaExplicitIntro} is
actually dimension. We state here a key result, which, combined with some combinatorics,
will prove that the dimensions of the $\text{Sp}(V)$-representations
$\eta^V_{W,B}(\rho)$ and $\phi^V_{W,B} (\rho)$ (resp. the $\text{O}(W,B)$-representations
$\zeta^{W,B}_V(\rho)$ and $\psi^{W,B}_V (\rho)$) always match
for $\rho \in \widehat{\text{O}(W,B)}$ (resp. $\rho \in \widehat{\text{Sp}(V)}$) for
$N>>n$ (resp. $n>>N$). 
We can derive then that they must always match, since for a fixed $\rho$, the dimensions
of $\eta^V_{W,B} (\rho)$ and $\phi^V_{W,B} (\rho)$ both form polynomials of $q^{N}$ (resp. the
dimensions of $\zeta^{W,B}_V (\rho)$ and $\psi^{W,B}_V (\rho)$ both form
polynomials of $q^n$). 
In combination with the fact that the semisimple part
of the classification data and the sign data of $\eta^V_{W,B} (\rho)$
or $\zeta^{W,B}_V( \rho)$
are already determined
by considering the restriction of the oscillator
representation to the general linear group, this will suffice to prove the representations themselves match.

We define, for $\rho$ an irreducible representation of $\text{Sp}_{2N} (\F_q)$, its {\em $N$-rank} to be
$$rk_N (\rho) = \lceil \frac{deg_q (\text{dim} (\rho))}{N} \rceil .$$
Similarly, for $\rho$ an irreducible representation of $\text{O} (W,B)$ with $\text{dim} (W) = n$, define
its {\em $n$-rank} to be
$$rk_n (\rho)= \lceil \frac{deg_q (\text{dim} (\rho))}{n} \rceil .$$

\begin{proposition}\label{NnRank}
Assume the notation of Theorem \ref{TotalHoweITheorem}.

\begin{enumerate}
\item \label{N>>nPartOfRankProp} Consider $N>>n$. Then the
disjoint union of the images of the eta correspondences
$$\eta^V_{W,B}: \widehat{\text{O}(W,B)} \hookrightarrow \widehat{\text{Sp}(V)}$$
for the symplectic space $V$ of dimension $2N$ and
the two choices of orthogonal space $(W,B)$ of dimension $n$
is precisely the set of irreducible representations of $\text{Sp}(V)$ of $N$-rank $n$.

\item \label{n>>NPartOfRankProp} Consider $n>>N$. Then the
image of the zeta correspondences
$$\zeta^V_{W,B}: \widehat{\text{Sp}(V)} \hookrightarrow \widehat{\text{O}(W,B)} $$
for the symplectic space $V$ of dimension $2N$ and
an orthogonal space $(W,B)$ of dimension $n$
is precisely the set of irreducible representations of $\text{O}(W,B)$ of $n$-rank $N$.

\end{enumerate}
\end{proposition}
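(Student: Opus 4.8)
\Proofs The plan is to prove part \ref{N>>nPartOfRankProp}; part \ref{n>>NPartOfRankProp} is then handled by the entirely parallel argument, using the orthogonal stable range decomposition \rref{ZetaCorrThmDecomp} in place of \rref{EtaCorrThmDecomp} and interchanging the two stable ranges throughout. Write $R_n\subseteq\widehat{Sp(V)}$ for the set of irreducibles of $N$-rank $n$. Since the correspondences $\eta^V_{W,B}$ occurring in a single decomposition \rref{EtaCorrThmDecomp} have mutually disjoint images (Theorem \ref{TotalHoweITheorem}), it suffices to establish: (i) $\eta^V_{W,B}(\widehat{O(W,B)})\subseteq R_n$ for each orthogonal space $(W,B)$ of dimension $n$; (ii) every $\tau\in R_n$ lies in the image of $\eta^V_{W,B}$ for one of the two choices of $(W,B)$; and (iii) these two images are disjoint.

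For (i) I would prove the more precise statement that, for $(W,B)$ of any dimension $m\le n$ and any $\rho\in\widehat{O(W,B)}$, one has $rk_N(\eta^V_{W,B}(\rho))=m$ once $N$ is large compared with $m$. The bound $rk_N\le m$ is immediate, since $\eta^V_{W,B}(\rho)$ is a constituent of $\omega[V\otimes W]$, which has dimension $q^{Nm}$. For the reverse inequality I would apply $\operatorname{Hom}_{O(W,B)}(\rho,-)$ to \rref{EtaCorrThmDecomp}: because $Ind^{P_0^B}$ is the identity functor, $\eta^V_{W,B}(\rho)$ appears with multiplicity exactly one in the resulting $Sp(V)$-representation, while every other constituent is some $\eta^V_{W[-k],B[-k]}(\sigma)$ with $k\ge 1$, which by the bound just proved (applied to $W[-k]$) and the fact that $\dim Ind^{P_k^B}(\sigma\otimes\epsilon(det))$ is independent of $N$ has $q$-degree at most $N(m-2)+O(1)$. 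I would then compute $deg_q \dim \operatorname{Hom}_{O(W,B)}(\rho,\omega[V\otimes W])$ directly, realizing $\omega[V\otimes W]$ on $\C[L\otimes W]$ for a Lagrangian $L\subseteq V$: there $O(W,B)$ sits inside the Levi $GL(L\otimes W)$ of the Siegel parabolic stabilizing $L\otimes W$ and acts by a linear character $\gamma$ times the permutation action, an element $g$ having $q^{N\dim\mathrm{Fix}_W(g)}$ fixed points. Averaging characters gives
$$\dim\operatorname{Hom}_{O(W,B)}(\rho,\omega[V\otimes W]) \;=\; \frac{\dim\rho}{|O(W,B)|}\,q^{Nm} \;+\; (\text{terms }c_e(q)\,q^{Ne}\text{ with }e\le m-1),$$
a polynomial in $q$ of degree $Nm-deg_q|O(W,B)|+deg_q\dim\rho$; once $N>deg_q|O(W,B)|$ this degree lies strictly between $N(m-1)$ and $Nm$, and by the previous observation it dominates all the $k\ge 1$ contributions. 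Hence $deg_q\dim\eta^V_{W,B}(\rho)$ equals that degree and $rk_N(\eta^V_{W,B}(\rho))=m$. Taking $m=n$ proves (i).

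For (ii) I would take $\tau\in R_n$ and invoke the first-occurrence bounds of \cite{TotalHoweI} (equivalently, the conservation relations) to see that $\tau$ occurs in $\omega[V\otimes W']$ for some orthogonal space $W'$ with $\dim W'\le n$, hence within the symplectic stable range once $N\gg n$. Choosing such $W'$ of minimal dimension $m'$, decomposition \rref{EtaCorrThmDecomp} shows the $Sp(V)$-constituents of $\omega[V\otimes W']$ are exactly the $\eta^V_{W'[-k],B'[-k]}(\sigma)$, so $\tau=\eta^V_{W'[-k],B'[-k]}(\sigma)$ for some $k$; if $k\ge 1$ then $\tau$ already occurs in $\omega[V\otimes W'[-k]]$, contradicting minimality of $m'$. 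Therefore $k=0$ and $\tau=\eta^V_{W',B'}(\sigma)$ with $\dim W'=m'$, whence $rk_N(\tau)=m'$ by (i), forcing $m'=n$. For (iii): a representation in both $n$-dimensional images would, by the argument just given, first occur at dimension $n$ for both orthogonal types, which contradicts the conservation relation $m^+(\tau)+m^-(\tau)=2N+O(1)$ when $N\gg n$.

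The technical heart is the $q$-degree computation in the second paragraph, and the points I expect to need care are: (a) checking that the standard Lagrangian model of $\omega[V\otimes W]$ restricts to $O(W,B)$ as the asserted twisted permutation representation and that the fixed-point count really controls the $q$-degree uniformly in $q$ (so that ``$deg_q$'' of these dimensions makes sense as a polynomial degree); and (b) the external first-occurrence input used in (ii). If the latter is not directly available from \cite{TotalHoweI}, I would instead replace (ii) and (iii) by the combinatorial identity $|R_n|=\sum_{\text{Witt types }\epsilon}|\widehat{O(W^\epsilon,B^\epsilon)}|$, which stabilizes in $N$ and can be read off from Lusztig's classification, and conclude from it together with (i) and the injectivity and disjointness of the $\eta^V_{W,B}$. \qed
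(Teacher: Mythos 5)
Your argument for (i) takes a genuinely different route from the paper's, and it is a nice simplification of that step. The paper never estimates $rk_N(\eta^V_{W,B}(\rho))$ directly; instead it proves (Lemma \ref{All2m+1NRank}, via the explicit Lusztig dimension formulas, together with the trivial upper bound) that the constructed map $\phi^V_{W,B}$ lands exactly in the set of $N$-rank $n$ representations, and then transports this to $\eta^V_{W,B}$ through the dimension-matching Theorem \ref{DimsMatchTheoremEta} and a rearrangement argument. Your route — applying $\operatorname{Hom}_{O(W,B)}(\rho,-)$ to \rref{EtaCorrThmDecomp}, isolating the $k=0$ term, bounding the $k\geq 1$ terms by induction on $\dim W$, and then pinning down the $q$-degree of the multiplicity via the Schr\"odinger-model fixed-point count — obtains the rank statement for $\eta$ itself, without ever invoking Lusztig classification. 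The paper uses the permutation model only in Subsection \ref{SemisimpleSignPartsMatch} to identify semisimple and sign data, never for $q$-degree estimates, so your observation is new to the argument.

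However, step (ii) has a genuine gap as written. You appeal to ``first-occurrence bounds / conservation relations'' from \cite{TotalHoweI}, but Theorem \ref{TotalHoweITheorem} supplies only the stable-range decomposition \rref{EtaCorrThmDecomp} and the mutual disjointness of the $\eta$-images; no conservation relation is established there or elsewhere in the paper. Without that input, your minimal-$W'$ argument already presupposes that $\tau$ occurs in some $\omega[V\otimes W']$ within the stable range, which is not automatic. Your fallback — the counting identity $|R_n|=\sum_\epsilon|\widehat{O(W^\epsilon,B^\epsilon)}|$ — is exactly the content of Lemma \ref{All2m+1NRank} (the paper's analysis of which Lusztig data give $N$-rank $n$, matched against $\phi^V_{W,B}$), and ``reading it off from Lusztig's classification'' is in substance the same computation. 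So (ii) is precisely where the paper's content sits, and you have not filled it in. As a small correction, (iii) needs no conservation relation: Theorem \ref{TotalHoweITheorem} already packages \rref{EtaCorrIntro} as a system of injections with mutually disjoint images, which gives (iii) immediately.
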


The present paper is organized as follows: In Section \ref{BackgroundSect},
we describe the classification of irreducible representations
obtained from Lusztig's parametrization of characters.
In Section \ref{LusztigClaimSect}, we describe our proposed constructions of the eta and
zeta correspondences in more detail. In Section \ref{CombinatoricsSect}, we prove
combinatorial identities proving our claimed constructions can be plugged into 
the decompositions in Theorem \ref{TotalHoweITheorem} and add up to the correct dimension.
In Section \ref{InductiveProof}, we use an inductive argument to prove Proposition \ref{NnRank}
and conclude Theorem \ref{EtaExplicitIntro}.
In Section \ref{SL2Sect}, we write down the zeta correspondence in the example
where $\text{dim} (V) = 2$.
\vspace{3mm}

\noindent {\bf Acknowledgement:} The author is thankful to the GAP package
CHEVIE which was used
to verify the results of this paper in cases of small rank.

\section{The Classification of Irreducible Representations}\label{BackgroundSect}

The purpose of this section is to give more details about
the classification of irreducible representations obtained
from Lusztig's parametrization of characters, specifically
in the case
of the symplectic and orthogonal groups.

We consider a reductive group $G$ over $\mathbb{F}_q$, specifically with a focus
on the case of symplectic, orthogonal, and special orthogonal groups.
The dual group $G^*$ can be constructed as a reductive group (also over $\F_q$)
whose a roots are obtained as the coroots dual to the original roots of $G$.
For example, 
$$\begin{array}{c}
\text{Sp}_{2r}^* = \text{SO}_{2r+1}, \; \text{SO}_{2r+1}^* = \text{Sp}_{2r}, \; (\text{SO}_{2r}^\pm )^* = \text{SO}_{2r}^\pm .
\end{array}$$
The role of the dual group is so that the data,
up to conjugacy, of a pair of a
maximal torus $T$ in $G$ and a choice of an irreducible character $\theta$ on $T(\F_q)$
defines a well-defined conjugacy class
of a semisimple element $(s)$ in $G^*$.
For more details, see Section 5 of \cite{DeligneLusztig}.

For such a pair of a maximal torus $T\subseteq G$ and a character $\theta$,
one can construct a virtual character $R_T(\theta)$ of $G(\F_q)$
called the {\em Deligne-Lusztig induction} of $\theta$ 
(see, for example, \cite{DeligneLusztig, LusztigDLDisconn, DeligneLusztigDisconnected}).
For the purpose
of this paper, we will treat this construction as a black box. Every irreducible representation
of the group $G(\F_q)$ appears with a non-zero coefficient in some Deligne-Lusztig induction.
The irreducible representations of $G(\F_q)$ can be partitioned into
disjoint subsets of $\widehat{G(\F_q)}$ indexed
by the geometric conjugacy classes of a torus $T$ and a character $\theta$ 
in whose Deligne induction they appear (Corollary 6.3 of \cite{DeligneLusztig}).
In the language of the dual group, we may associate each of these subsets of $\widehat{G(\F_q)}$
called the Lusztig series corresponding to a semisimple conjugacy class $(s) \in G^*(\F_q)$.

The irreducible representations in the Lusztig series for $(1) \in G^* (\F_q)$ are called the {\em unipotent
irreducible representations} of $G$. 
We shall write $\widehat{G}_u$ for the set of irreducible unipotent representations
of $G$. We note that there is a bijection between the irreducible unipotent
representations of a group $G$ and its dual, which we denote by
$$\begin{array}{c}
\widehat{G}_u \rightarrow \widehat{(G^*)}_u\\
u \mapsto \widetilde{u}\hspace{5mm}
\end{array}
$$

To consider the case of $G=\text{Sp}_{2r}$ or $\text{SO}_{2r}^\pm$
which have disconnected center, these series can be further partioned
according to elements of $Z_{G^*(\F_q)} (s)/ Z_{G^*(\F_q)} (s)^\circ$, which in these cases
may be $\mu_2 = \{\pm 1\}$ as we will discuss later (see \cite{DeligneLusztigDisconnected}).

Next, each Lusztig series corresponding to $(s) \in G^* (\F_q)$ 
(and possibly a sign when $Z_{G^*(\F_q)} (s)/ Z_{G^*(\F_q)} (s)^\circ = \mu_2$)
can be identified in bijective correspondence with the unipotent
irreducible representations of the (dual of the) centralizer $Z_{G^* (\F_q)} (s)^\circ$
(see for example 
\cite{Lusztig} Theorem 4.23, \cite{DeligneLusztigDisconnected}
Proposition 3.4).

We now summarize the classification of the irreducible representations of
$G= \text{Sp}_{2r}$, $\text{SO}_{2r+1}$, $\text{SO}_{2r}^\pm$ obtained from this theory,
as indexed by {\em $G$-classification data}, consisting of 
\begin{itemize}
\item (``semisimple data"): a conjugacy class $(s)$
of a semisimple element $s$ of the dual group $G^*(\F_q)$

\item (``unipotent data"): a unipotent representation $u$
of the dual of the identity component $Z_{G^* (\F_q) } (s)^\circ$ of $s$'s centralizer $Z_{G^* (\F_q) } (s)$.

\item (for $G= \text{Sp}_{2r}$ or $\text{SO}_{2r}^\pm$,
possible ``central data"): a specification of
a sign $\pm 1$ when $Z_{G^*(\F_q)} (s)/ Z_{G^*(\F_q)} (s)^\circ = \mu_2$.
\end{itemize}
For every choice of this data, we denote the associated irreducible
representation by 
$r^G[(s),u ]$ when no central data occurs and $r^G[(s), u , \pm 1]$ when central data does occur.
Its dimension is the dimension of $u$ multiplied by the prime to $q$ part
of the quotient of the order of $G$ over the order of $s$'s centralizer (see \cite{DigneMichel} Remark 13.24,
\cite{LusztigDLDisconn}).
This can be expressed in our cases as
\beg{FinalDimIrredGenForm}{\begin{array}{c}
\text{dim} (r^G [(s), u] ) = \frac{|G|_{q'}}{|Z_{G^*(\F_q)} (s)^\circ|_{q'}} \text{dim}(u), \\[1ex]
 \text{dim} (r^G [(s), u, \pm 1] ) = \frac{|G(\F_q)|_{q'}}{2 |Z_{G^*(\F_q)} (s)^\circ|_{q'}} \text{dim}(u)
\end{array}}
where $|?|_{q'}$ denotes the prime to $q$ part of the group order.
In the case when central data occurs, we also may consider the sum
$$r^G [(s), u]  = r^G [(s), u, +1]  \oplus r^G [(s), u, -1] .$$

Finally, for our purposes, we will also need to consider cases of
the full orthogonal groups $G = \text{O}_{2m+1}$ and $G= \text{O}_{2m}^\pm $.
Each of these $G$'s is an extension of its identity component $G^\circ$ (i.e. the corresponding
special orthogonal group) by $\mu_2$.
We approach the full orthogonal groups
$G= \text{O}_{2r+1}$ and $\text{O}_{2r}^\pm$ by 
indexing the irreducible representations by the corresponding special orthogonal
group $G^\circ$'s classifiction data and
\begin{itemize}
\item (``extension data"): an index $\gamma$ specifying of which irreducible summand
of the induction of a representation of the corresponding special orthogonal group
we are referring to.
\end{itemize}
More specifically,
when $G= \text{O}_{2r+1} (\F_q)$, the index $\gamma$ is an element of $\{\pm 1\}$.
When $G= \text{O}_{2r}^\pm (\F_q)$, $\gamma$ is an element of $\{\pm 1\}^{a(s)+b(s)}$
where
$$a(s) = \begin{cases}
1 & \parbox{4in}{if $s$ has the eigenvalue $1$ and the factor of $u$ corresponding to the
eigenvalue $1$ is a non-degenerate Lusztig symbol}\\[2ex]
0 & \text{else}
\end{cases}$$
$$b(s) = \begin{cases}
1 & \parbox{4in}{if $s$ has the eigenvalue $-1$ and the factor of $u$ corresponding to the
eigenvalue $-1$ is a non-degenerate Lusztig symbol}\\[3ex]
0 & \text{else.}
\end{cases}$$
Note that in the case of $G= \text{O}_{2r}^\pm (\F_q)$, $(s)$ is an element
of $\text{SO}_{2r}^\pm (\F_q)$.

The purpose of this section is to give more detail about each part of the classification data
in each case of $G$ we consider in this paper.
In Subsection \ref{ToriSubSect}, we discuss the maximal tori
in symplectic and orthogonal groups and the form of the semisimple elements,
up to conjugation. In Subsection \ref{IrredOddSO}, we discuss the irreducible representations
of $\text{O}_{2m+1} (\F_q)$. 
In Subsection \ref{EvenSOIrredSubSect}, we discuss the irreducible representations of $\text{O}_{2m}^\pm
(\F_q)$.
In Subsection \ref{IrredSP}, we discuss the irreducible representations
of $\text{Sp}_{2N} (\F_q)$.

\subsection{Tori and the case of rank 1}\label{ToriSubSect}

Suppose $G (\F_q)$ is of the form
$\text{Sp}_{2r} (\F_q)$, $\text{SO}_{2r+1} (\F_q)$, or $\text{SO}_{2r}^\pm (\F_q)$.
The maximal tori in $G(\F_q)$ are all conjugate to a product of $\text{SO}_2^\pm (\F_{q^n})$
factors
\beg{ToriInAll}{\text{SO}_{2}^\pm (\F_{q^{n_1}}) \times \dots \times \text{SO}_{2}^\pm (\F_{q^{n_k}})}
of maximal rank, so that
$r = n_1+ \dots + n_k,$
and where, in the case of $G$ an even special orthogonal group $\text{SO}_{2r}^\pm (\F_q)$,
the sign in the superscript is equal to the product of the signs appearing in \rref{ToriInAll}.
Recall that
\beg{SO2+}{\text{SO}_2^+ (\F_q) = \{ \begin{pmatrix}
x & 0\\
0 & x^{-1}
\end{pmatrix} \mid x \in \F_q^\times \} \cong \mu_{q-1}}
\beg{SO2-}{\text{SO}_2^- (\F_q) = \{ \begin{pmatrix}
y & z\\
\varepsilon z & y
\end{pmatrix} \mid y,z \in \F_q \; y^2 - \varepsilon z^2 = 1\}\cong \F_{q^2}^\times /\F_q^\times \cong \mu_{q+1},}
where in \rref{SO2-}, $\varepsilon \in \F_q^\times $
is an element which is not a square, and the isomorphism 
follows by considering $\F_{q^2} = \F_q [ \sqrt{\varepsilon}]$, whose norm $1$ elements are isomorphic to
$\mu_{q+1}$. 
Note that the eigenvalues of an element 
$\begin{pmatrix} y & z\\
\varepsilon z & y
\end{pmatrix}$ of \rref{SO2-} are precisely the conjugate norm $1$ elements $y\pm \sqrt{\varepsilon}z$.

Every semisimple element $(s) \in G (\F_q)$ is in the $G(\F_q)$-conjugacy class of an element of \rref{ToriInAll}, which is classified by the orbit of the
list of its eigenvalues, which can lie in $\mu_{q-1} \cong \F_q^\times$
or the norm $1$ elements $\mu_{q+1}$ of $\F_{q^2}^\times$, under the action
of the Weyl group of $G$.

By changing the coordinates of the underlying orthogonal or symplectic form 
corresponding to $G$, we can in fact consider the tori \rref{ToriInAll} as
embedded in $G$ directly by taking a direct sum of matrices.
We note that in the case of $G$ an odd 
special orthogonal group $\text{SO}_{2r+1} $, to embed
a torus of the form \rref{ToriInAll} into $G$, we need to insert a ``forced" diagonal entry $1$
to obtain a matrix of size $2r+1$. In other words, every semisimple element
of $\text{SO}_{2r+1} (\F_q)$ has a single extra $1$ eigenvalue in addition to the eigenvalues
detected by its conjugacy class in \rref{ToriInAll}, so that the total number of $1$ eigenvalues
can be odd.
The placement
of this entry is according to whether the product \rref{ToriInAll} is a subgroup of $\text{SO}_{2r}^+ (\F_q)$
or $\text{SO}_{2r}^- (\F_q)$. 

Therefore, the data of a list of elements
\beg{SemisimplEigenvalData}{(\lambda_1, \dots , \lambda_t) \in \prod_{i=1}^t \mu_{q^{r_i\pm 1}}}
for $\lambda_i \in \mu_{q^{r_i} \pm 1} $ determines a semisimple element
of $\text{SO}_{2r}^\pm (\F_q)$ obtained by taking a direct sum
of matrices in $\text{SO}_{2r_i}^\pm (\F_q)$
corresponding to each $\lambda_i$,
so that in $\text{SO}_{2r_i}^\pm  (\F_{q^{r_i}})$, each is conjugate to
\beg{BlocksHigherFieldExtSemisimpIntro}{\bigoplus_{j = 0}^{r_i -1} \begin{pmatrix}
\lambda_i^{q^j} & 0\\
0  & \lambda_i^{-q^j}
\end{pmatrix}.}
To make this form easier to discuss, let us introduce a notation for these blocks:
Write $A_{\lambda_i}$ for the element of $\text{SO}_{2r}^\pm (\F_q)$ conjugate
to \rref{BlocksHigherFieldExtSemisimpIntro}.

This is enough information about semisimple elements
for our purposes in the cases of even special orthogonal groups
and symplectic groups. We note that in the case of symplectic groups, we may embed each
factor $\text{SO}_2^\pm (\F_{q^{r_i}})$ of the torus into $\text{SL}_2 (\F_{q^{r_i}})$, in which
we may consider the Weyl group action,
so the conjugacy class corresponding to \rref{SemisimplEigenvalData} only depends on
the equivalence class
\beg{SemisimplEigenvalDataEquiv}{(\lambda_1, \dots , \lambda_t) \in \prod_{i=1}^t \mu_{q^{r_i} \pm 1}/ (\lambda \sim \lambda^{-1}).}

In the case of odd special
orthogonal groups $\text{SO}_{2r+1} (\F_q)$, there is one more subtlety.
As described above, the semisimple elements of $\text{SO}_{2r+1} (\F_q)$ correspond to semisimple
elements of $\text{SO}_{2r}^\pm (\F_q)$, embedded into $\text{SO}_{2r+1} (\F_q)$
by adding a single diagonal $1$ entry forced by the symmetric bilinear form.
For $2r$ by $2r$ matrices which can be considered
in groups \rref{ToriInAll} for more than one choice of signs (meaning some
factors are equal to the identity matrix $I$ or $-I$), then we must consider whether these two choices
of where to insert the final forced diagonal entry $1$ give different conjugacy classes, or not.
The only cases of eigenvalues which can correspond to elements of either 
choice of torus splitness are $1$ and $-1$ eigenvalues.
If only $1$ eigenvalues are present, it is indistinguishable where we add the extra $1$
diagonal entry. So we find that these two choices
give different conjugacy classes if and only if the
$2r$ by $2r$ element considered in \rref{ToriInAll}
has any $-1$ eigenvalues, in which case the resulting two choices
of elements will turn out to have different centralizers and cannot be conjugate.
We also note that odd special orthogonal groups, like symplectic groups,
do have a large enough Weyl group to consider eigenvalue data
only up equivalence class as in \rref{SemisimplEigenvalDataEquiv}.

\vspace{2mm}

To summarize, we may consider any semisimple element of a group $G$
as conjugate to a sum of blocks
\beg{SAsSumOfBlocks}{s \sim A_{\lambda_1} \oplus \dots \oplus A_{\lambda_t},}
with an additional $1$ inserted in the case of $G=\text{SO}_{2r+1} (\F_q)$, 
considering the extra data of a choice
of where precisely if one of the $\lambda_i$ is equal to $-1$.
Further, if we must refer to a maximal torus containing $s$,
let us also always choose to minimize the field extensions $\F_{q^{r_i}}$ needed
in \rref{ToriInAll} to contain the eigenvalues of $s$, so that there are no $r_i'< r_i$ such that
$\lambda_i \in \F_{q^{r_i'}} \subset \F_{q^{r_i}}$.

\begin{definition}
For $G$ a symplectic or (special) orthogonal group, we say a semisimple element $s$ is in
a {\em generic conjugacy class} if it has no $\pm 1$ eigenvalues
(not counting the forced $1$ eigenvalue for $G$ odd (special) orthogonal). Otherwise, say $s$'s
conjugacy class is {\em singular
of type $(p, \ell )$} if it corresponds to a sum of blocks of the from
$$(A_1)^{\oplus p} \oplus (A_{-1})^{\oplus \ell} \oplus \bigoplus_{i=1}^s A_{\lambda_i}$$
as in \rref{SAsSumOfBlocks}, for $\lambda_i \neq \pm 1$ (again disregarding the forced $1$ eigenvalue for $G$ odd (special) orthogonal).
\end{definition}

To see how our description of semisimple elements works in practice,
let us discuss the examples of rank 1 groups $G$. The even cases of
$\text{SO}_2^\pm (\F_q) = \mu_{q\mp 1}$ are not large enough to show any interesting effects,
and are abelian.

\vspace{1mm}

\noindent{\bf Example: $\mathbf{G} =\mathbf{SL}_2 (\F_q)$ and $\mathbf{SO}_3 (\F_q)$}
{\em For either case of $G = \text{SL}_2 (\F_q)$ or $\text{SO}_3 (\F_q)$, the only
maximal tori are isomorphic to the special orthogonal groups
$\text{SO}_2^\pm (\F_q) \sim \mu_{q\mp 1}$.

On the one hand, in the case of $G=\text{SL}_2 (\F_q)$, there are
\begin{enumerate}
\item two central elements 
$$A_1 = \begin{pmatrix} 1 & 0\\ 0 &1 \end{pmatrix}, \;
A_{-1}=\begin{pmatrix} -1 & 0\\ 0 &-1 \end{pmatrix},$$
of types $(1,0)$ and $(0,1)$ respectively, which can be considered in both $\text{SO}_2^\pm (\F_q)$.

\item $(q-3)/2$ generic semisimple conjugacy classes with representatives
$$A_\lambda = \begin{pmatrix} \lambda & 0\\ 0 &\lambda^{-1} \end{pmatrix} \sim  
\begin{pmatrix} \lambda^{-1} & 0\\ 0 &\lambda \end{pmatrix}$$
for $\lambda \in \F_q^\times \smallsetminus \{\pm 1\}$

\item $(q-1)/2$ generic semisimple conjugacy classes with representatives $A_\mu \in \text{SO}_2^- (\F_q)$ which are conjugate to
$$\begin{pmatrix} \mu & 0\\ 0 &\mu^{-1} \end{pmatrix} \sim  
\begin{pmatrix} \mu^{-1} & 0\\ 0 &\mu \end{pmatrix}$$
in $\text{SO}_2^- (\F_{q^2})$,
for $\mu \in \mu_{q+1} \smallsetminus \{\pm 1\}$
\end{enumerate}
In total, there are $q$ semisimple conjugacy classes in $\text{SL}_2 (\F_q)$.

On the other hand, in $G=\text{SO}_3 (\F_q)$, we must consider more carefully
how the blocks $A_\lambda$, $A_\mu$ can be embedded as $3\times 3$ matrices in $G$.
For this, let us suppose the symmetric bilinear form $B$ defining $G = \text{SO}(\F_q^3, B)$
is of the form 
$$B = \begin{pmatrix} 1 & 0 & 0\\ 0 & -1 & 0 \\ 0 & 0 & a \end{pmatrix}$$
with discriminant $-1$ (the case of discriminant $1$ is entirely
similar, with a reversed choice of where the forced $1$'s are placed).
For a choice of $\lambda \in \F_q^\times$, corresponding to a block $A_\lambda \in \text{SO}_2^+ (\F_q)$,
we can embed it as the element
$$
A_\lambda^+ := \begin{pmatrix}
\lambda & 0 & 0\\
0 & \lambda^{-1} & 0\\
0 & 0 & 1
\end{pmatrix}.
$$
(due to the ambiguity in the case when $\lambda = -1$, it is meaningful in this case to keep track
of which sign is in the superscript of the choice of $\text{SO}_2^\pm (\F_q)$ we start with).
To see why $A_\lambda^+$ and $A_{\lambda^{-1}}^+$ are conjugate, we have
$$
\begin{pmatrix}
0 & 1& 0\\
-1 & 0 &0\\
0 & 0 & 1
\end{pmatrix}\cdot \begin{pmatrix}
\lambda & 0 & 0\\
0 & \lambda^{-1} & 0\\
0 & 0 & 1
\end{pmatrix} \cdot \begin{pmatrix}
0 & -1 & 0\\
1 & 0 & 0\\
0 & 0 & 1
\end{pmatrix}=
\begin{pmatrix}
\lambda^{-1} & 0 & 0\\
0 & \lambda & 0\\
0 & 0 & 1
\end{pmatrix}.$$
Similarly, for choices of $\mu = y+\sqrt{\varepsilon} z$ of norm $1$ in $\F_{q^2}$, we can
embed $A_\mu \in \text{SO}_2^- (\F_q)$ as a $3\times 3$ matrix in $\text{SO}_3 (\F_q)$ as
$$A_\mu^- := \begin{pmatrix}
y & 0 & z\\
0 & 1 & 0\\
\varepsilon z & 0 & y
\end{pmatrix}$$
A similar argument gives that $A_\mu^-$ and $A_{\mu^{-1}}^-$ are conjugate in $\text{SO}_3 (\F_q)$.
For $\lambda \in \F_q^\times \smallsetminus \{\pm 1\}$ and $\mu \in \mu_{q+1} \smallsetminus \{\pm1\}$, we obtain $(q-3)/2$ and $(q-1)/2$ generic semisimple conjugacy classes, respectively.


Now consider the elements $A_{-1}^+$ and $A_{-1}^-$.
There are two singular conjugacy classes of type $(0,1)$: the
conjugacy classes of
$$\sigma_1^+ := \begin{pmatrix}
-1 & 0 & 0\\
0 & -1 & 0\\
0 & 0 & 1
\end{pmatrix}, \hspace{3mm}
\sigma_1^-:= \begin{pmatrix}
-1 & 0 & 0\\
0 & 1 & 0\\
0 & 0 & -1
\end{pmatrix},
$$
which have centralizers $\text{O}_2^+ (\F_q)$, $\text{O}_2^- (\F_q)$ in $\text{SO}_3 (\F_q)$, respectively
(since here
the centralizer is the orthogonal group on the two coordinates corresponding
to the two $-1$ entries in $\sigma_1^\pm$).
We therefore find that there
are a total of $q+1$ semisimple conjugacy classes in $\text{SO}_3 (\F_q)$.
}

\vspace{1mm}
In $\text{SO}_{2r+1} (\F_q)$,
elements $\sigma_n^\pm$ which play a role generalizing $\sigma_1^\pm$ exist and
play a very important role in considering oscillator representations.
We define them now:

\begin{definition}\label{SigmanDefn}
Let us consider an odd special orthogonal group $\text{SO}_{2r+1} (\F_q)$.
Consider the maximal tori obtained by emebedding
$$\begin{array}{c}
(\text{SO}_2^+ (\F_q))^r \subseteq \text{SO}_{2r}^+ (\F_q)\\
(\text{SO}_2^+ (\F_q))^{r-1} \times \text{SO}_2^- (\F_q) \subseteq \text{SO}_{2r}^- (\F_q)\\
\end{array}$$
into $\text{SO}_{2r+1} (\F_q)$. Consider the element consisting of a sum of $r$ copies of $A_{-1}$
lying in either choice of torus in $\text{SO}_{2r}^\pm (\F_q)$.
We write
$$ \sigma^+_r := ( A_{-1}^{\oplus r})^+, \hspace{5mm} \sigma^-_r := ( A_{-1}^{\oplus r})^-$$
for the $(2r+1)\times (2r+1)$ matrices in $\text{SO}_{2r+1} (\F_q)$
obtained by adding the $1$ forced by considering the sum of $A_{-1}$'s
as an element of the split and non-split special orthogonal group, respectively
(as in the above example, the only role of the superscript $\pm$ is to record the sign
of $\text{SO}_{2r}^\pm (\F_q)$ we consider $A_{-1}^{\oplus r}$ in).
\end{definition}

We find that the centralizers of these semisimple elements 
are the even orthogonal groups
\beg{OscRepsCentras}{
Z_{\text{SO}_{2r+1} (\F_q)} (\sigma_r^\pm) \cong \text{O}_{2r}^\pm (\F_q), \hspace{3mm}
Z_{\text{SO}_{2r+1} (\F_q)} (\sigma_r^\pm )^\circ \cong \text{SO}_{2r}^\pm (\F_q)}
In particular $\sigma_r^+$ and $\sigma_r^-$ cannot be conjugate in $\text{SO}_{2r+1} (\F_q)$.

\subsection{The representation theory of the symplectic group}\label{IrredSP}
We now describe the classification data
for the irreducible representations of $\text{Sp}_{2N }(\F_q)$.
The choices of semisimple data consist of semisimple conjugacy
classes in the dual group  $\text{Sp}_{2N}^* (\F_q) = \text{SO}_{2N+1} (\F_q)$.
Fix a semisimple conjugacy class
$(s) \in \text{SO}_{2N+1} (\F_q)$ and say it is conjugate to a sum of blocks
\beg{SO2N+1ABlocksCase}{s\sim A_{1}^{\oplus p} \oplus (A_{-1}^{\oplus \ell})^\alpha \oplus \bigoplus_{i=1}^r A_{\lambda_i}^{\oplus j_i} \oplus \bigoplus_{i=1}^t A_{\mu_i}^{\oplus k_i}}
for distinct choices of eigenvalues 
\beg{DistinctNontrivEigenvs}{\begin{array}{c}
\lambda_i \in \F_{q^{n_i'}}^\times \smallsetminus \{\pm 1\} \text{ for } i =1,\dots ,r\\[1ex]
 \mu_i \in \mu_{q^{n_i''} +1} \smallsetminus \{\pm 1\}\text{ for } i =1,\dots ,t
\end{array}
}
and where $\alpha$ denotes the sign for which we consider $A_{-1}^{\oplus  \ell} \in \text{SO}_{2\ell}^\alpha (\F_q)$. There is no condition on this sign. The size of the matrix requires $N$
\beg{Rankmatrizsizesemisimpdata}{p + \ell + \sum_{i=1}^r j_i n_i' + \sum_{i=1}^t k_i n_i'' .}
Note that $s$ is then of type $(p,\ell)$ and generic when $p = \ell = 0$.
Then the identity component of the
centralizer of $s$ in $\text{SO}_{2N+1} (\F_q)$ is isomorphic to the product
\beg{SO2lPmInCentForSO2N+1}{\prod_{i=1}^r U_{j_i}^+ (\F_{q^{n_i'}}) \times\prod_{i=1}^t U_{k_i}^- (\F_{q^{n_i''}}) \times \text{SO}_{2\ell}^\pm (\F_q)
\times \text{SO}_{2p+1}(\F_q).}
(We use the notation that $U_j^+ (\F_q) = \text{GL}_j (\F_q)$.)
The full centralizer of $s$ is obtained by replacing the factor $\text{SO}_{2\ell}^\pm (\F_q)$
by $\text{O}_{2\ell}^\pm (\F_q)$. In particular, note that
$Z_{\text{SO}_{2N+1} (\F_q)} (s) /Z_{\text{SO}_{2N+1} (\F_q)} (s)^\circ$ is $\mu_2$
precisely when $\ell>0$ and is trivial when $\ell =0$.


Given such semisimple data $(s) \in \text{SO}_{2N+1} (\F_q)$, let us consider
an irreducible unipotent representation $u$ of the dual group of
\rref{SO2lPmInCentForSO2N+1}
(taking the dual replaces the odd special orthogonal group factor by the corresponding
symplectic group of the same rank and leaves all other factors the same). 
Then $u$ can be expressed as a tensor product
\beg{SymplecticRepsUnipFactors}{\bigotimes_{i=1}^r u_{U^+_{j_i}} \otimes\bigotimes_{i=1}^t u_{U^-_{k_i}} \otimes u_{\text{SO}_{2\ell}^\pm}  \otimes u_{\text{Sp}_{2p}}}
where $u_{U_{j_i}^+}$, $u_{U_{k_i}^-}$,  $u_{\text{SO}_{2\ell }^\pm}$, and $u_{\text{Sp}_{2 p }}$
are unipotent irreducible representations of $U_{j_i}^+ (\F_{q^{n_i'}})$, $U_{k_i}^- (\F_{q^{n_i''}})$,
$\text{SO}_{2\ell}^\pm (\F_q)$, and $\text{Sp}_{2p} (\F_q)$, respectively.

The irreducible unipotent representations of these factors
can be further described using the theory of {\em symbols}
\cite{LusztigSymbols}.
We do not discuss the case of the unipotent representations of a finite group of Lie type
$A$ or ${}^2 A$ here.
For now, we consider the symbols of type $B$, $C$, $D$, and ${}^2D$ of rank $r$.

\begin{definition}\label{SymbolsDefn}
Consider the data of an equivalence classes of two rows of strictly increasing sequences
\beg{TwoRowSymbol}{
{\lambda_1 < \lambda_2 < \dots < \lambda_a
\choose \mu_1< \mu_2 < \dots < \mu_b}
}
under switching rows, for $\lambda_i, \mu_i \in \N_0 $ non-negative integers 
such that $(\lambda_1, \mu_1) \neq (0,0)$.
\begin{enumerate}
\item \label{OddSymbolsItem}
The data \rref{TwoRowSymbol} is called a {\em symbol of rank $r$ of type $C$ or $B$} if 
\beg{RankBCtypeSymbols}{ \sum_{i=1}^a\lambda_i + \sum_{i= 1}^b \mu_i = r + \frac{(a+b-1)^2}{4},}
and the {\em defect} $a-b$ is odd.

\item \label{EvenSymbolIsItem}
The data \rref{TwoRowSymbol} is called a {\em symbol of rank $r$ of type $D$} (resp. {\em of type ${}^2 D$}) if
\beg{RankDtypeSymbols}{
 \sum_{i=1}^a \lambda_i + \sum_{i=1}^b \mu_i =r +\frac{(a+b)(a+b-2)}{4}
}
and the {\em defect} $a-b$ is $0$ mod $4$ (resp. $2$ mod $4$). We call a
symbol of type $D$ {\em degenerate} if the two rows match, i.e. $a= b$ and $\lambda_i = \mu_i$.
\end{enumerate}
\end{definition}

The symbols of type $B$ or $C$ of rank $r$ classify the irreducible unipotent representations
of the group $\text{SO}_{2r+1} (\F_q)$ or $\text{Sp}_{2r} (\F_q)$
(since these groups have the same Weyl group, their irreducible unipotent representations have the same classification). The symbols of type $D$, resp. ${}^2 D$, of rank $r$ classify the irreducible unipotent
representations of the group $\text{SO}_{2r}^+ (\F_q)$, resp. $\text{SO}_{2r}^- (\F_q)$.

The dimension of the unipotent representation of $G$ corresponding
to a non-degenerate symbol \rref{TwoRowSymbol} is the factor
\beg{SymbolDimTwoRowsBigFrac}{
\frac{\displaystyle \prod_{1\leq i<j \leq a} (q^{\lambda_j} - q^{\lambda_i}) \cdot
\prod_{1\leq i<j \leq b} (q^{\mu_j} -q^{ \mu_i}) \cdot \prod_{\text{$\begin{array}{c}
1\leq i \leq a\\[-1.75ex]
1\leq j \leq b
\end{array}$}} (q^{\lambda_i} + q^{\mu_j})  }{\displaystyle 
\prod_{i=1}^a  \prod_{j=1}^{\lambda_i} (q^{2j} -1)
\cdot \prod_{i=1}^b  \prod_{j=1}^{\mu_i} (q^{2j} -1)\cdot q^{c[a,b]} }}
multiplied by $|G|_{q'}/
2^{\lfloor (a+b-1)/2\rfloor}$,
where, for the final power of $q$ in the denominator of \rref{SymbolDimTwoRowsBigFrac}, we write
$$ c[a,b] = \sum_{i=1}^{\lfloor (a+b)/2 \rfloor} {a+b-2i \choose 2} .$$

In particular, the dimension of the unipotent represent associated to a non-degenerate symbol 
${\lambda_1< \dots <\lambda_a \choose \mu_1 < \dots < \mu_b}$ of type
$D$ of ${}^2 D$ and rank $p$
is again the factor \rref{SymbolDimTwoRowsBigFrac},
multiplied by
\beg{DTypeFactorSymbol}{\frac{|SO_{2p}^\pm (\F_q)|_{q'}}{2^{(a+b-2)/2}}.}
Each degenerate symbol of $\text{SO}_{2r}^+ (\F_q)$ corresponds to a pair of distinct non-isomorphic
irreducible unipotent representations of $\text{SO}_{2r}^+ (\F_q)$, of dimension equal to
\rref{SymbolDimTwoRowsBigFrac} multiplied
by $|\text{SO}_{2r}^+ (\F_q)|_{q'}/ 2^{a+b}$ (i.e. by an additional half compared
to the factor for non-degenerate symbols).

\vspace{2mm}

For choices of semisimple and unipotent data $(s)$ of type $(p, 0)$ and $u$,
we can form irreducible $\text{Sp}_{2N} (\F_q)$-representations
$r^{\text{Sp}_{2N} (\F_q)}[(s), u]$ of dimension equal to the dimension of $u$ multiplied by the factor
\beg{indexSp2N}{\frac{|\text{Sp}_{2N} (\F_q)|_{q'}}{|\prod_{i=1}^r U_{j_i}^+ (\F_{q^{n_i'}}) \times\prod_{i=1}^t U_{k_i}^- (\F_{q^{n_i''}}) \times
\text{SO}_{2\ell}^\pm (\F_q) \times \text{Sp}_{2p} (\F_q)|_{q'}}.}

Say $s$ is of type $(p,\ell)$ for $\ell \neq 0$
and $u_{\text{SO}_{2\ell}^\pm }$ corresponds to a non-degenerate symbol.
In this case, for each $\alpha \in \{\pm 1\}$ in the quotient of $s$'s centralizer
over its identity component, there is an irreducible representation
$r^{\text{Sp}_{2N} (\F_q)}[(s), u, \alpha]$
of dimension equal to half the dimension of $u$ times
\rref{indexSp2N}.
%
In the case where $s$ has $-1$ as an eigenvalue (i.e. $\ell \neq 0$) and
$u_{\text{SO}_{2\ell}^\pm }$ corresponds to a degenerate symbol, only a single irreducible
representation $r^{\text{Sp}_{2N} (\F_q)} [(s) ,u]$ of $\text{Sp}_{2N} (\F_q)$ is produced
(i.e the same irreducible $\text{Sp}_{2N} (\F_q)$-representation is produced by using the other irreducible
unipotent of $\text{SO}_{2\ell}^\pm$ corresponding to the same symbol, ``merging" the two degenerate pieces).
In this case, we obtain a single irreducible representation
$r^{\text{Sp}_{2N} (\F_q)} [(s) ,u]$
of dimension equal to $2$ times $\text{dim} (u)$ times \rref{indexSp2N}.


\vspace{2mm}

\noindent {\bf Example:}
The oscillator representations 
\beg{OscRepsIntoHalves}{\omega_a = \omega_a^+ \oplus \omega_a^-,
\hspace{5mm} \omega_b = \omega_b^+ \oplus \omega_b^-}
of $\text{Sp}_{2N} (\F_q)$ can be recovered using this classification.
%
%
%
%
%
Take $\sigma_N^\pm \in \text{SO}_{2N+1} (\F_q)$ as the semisimple data.
Recalling \rref{OscRepsCentras}, the identity component of the centralizer
of $\sigma_N^\pm$ is $\text{SO}_{2N}^\pm (\F_q)$, which is self-dual.
The factor \rref{indexSp2N} is then
\beg{QNpm1IndexFact}{\frac{|\text{Sp}_{2N} (\F_q)|_{q'}}{|\text{SO}^\pm_{2N} (\F_q)|_{q'}} = 
\frac{\prod_{i=1}^N (q^{2i} -1)}{(q^N \mp 1)\prod_{i=1}^{N-1} (q^{2i}-1)}
=q^N\pm 1}
In both cases, take $u$ to be the trivial representation $1$.

The representations $r^{\text{Sp}_{2N} (\F_q)}[(\sigma_N^\pm), 1]$ of dimension $q^N\pm 1$ decompose according to the action of the center $\Z/2 = Z (\text{Sp}_{2N} (\F_q))$
$$r^{\text{Sp}_{2N} (\F_q)}[(\sigma_N^+), 1] =  \omega_a^+ \oplus \omega_b^+,
\hspace{5mm} r^{\text{Sp}_{2N} (\F_q)}[(\sigma_N^-), 1] =  \omega_a^- \oplus \omega_b^-$$
recovering the irreducible components of the oscillator representations \rref{OscRepsIntoHalves}. 
In our notation, we have
$\omega_{a}^\pm = r^{\text{Sp}_{2N} (\F_q)}[(\sigma_N^+), 1, \epsilon(a)]$,
where $\epsilon$ denotes the quadratic character 
\beg{QuadCharacter}{\epsilon: \F_q^\times \rightarrow \{\pm 1\}.}

\subsection{The representation theory of the odd orthogonal groups}\label{IrredOddSO}
Next, we describe the classification data for the irreducible representations of the odd orthogonal groups $\text{O}_{2m+1} (\F_q)$. In this case, we can split the center off
$$\text{O}_{2m+1} (\F_q) =  \Z/2 \times \text{SO}_{2m+1} (\F_q),$$ 
and therefore each irreducible representation can be considered as the tensor product
of a sign with its irreducible restriction to 
the special orthogonal group $\text{SO}_{2m+1} (\F_q)$.
Now since $\text{SO}_{2m+1} (\F_q)$ has no center,
the irreducible representations are precisely the representations $r^{\text{SO}_{2m+1} (\F_q)}[(s),u]$,
corresponding to choices of $\text{SO}_{2m+1} (\F_q)$-classification data consisting of
a conjugacy class $(s)$ of a semisimple element 
$s \in \text{Sp}_{2m} (\F_q) = \text{SO}_{2m+1}^* (\F_q)$,
and $u$ an irreducible unipotent representation of the dual of
the identity component of the centralizer of $s$ in $\text{Sp}_{2m} (\F_q)$. We call the sign defining the action of the 
center $\mu_2 = Z (\text{O}_{2m+1} (\F_q))$ the $\text{O}_{2m+1} (\F_q)${\em -extension data} corresponding
to a certain irreducible rerpesentation and write
\beg{ROExtSignData}{r^{\text{O}_{2m+1} (\F_q)} [ (s), u]^{\pm 1}: = (\pm 1) \otimes r^{\text{SO}_{2m+1} (\F_q)} [ (s), u].}
It suffices to describe the classification data $[(s), u]$ describing the irreducible
$\text{SO}_{2m+1} (\F_q)$-representations.

The choices of semisimple data for $\text{SO}_{2m+1} (\F_q)$ consist of
semisimple conjugacy classes in the dual group
$\text{SO}_{2m+1}^* (\F_q) = \text{Sp}_{2m} (\F_q)$.
Fix such an $(s) \in \text{Sp}_{2m} (\F_q)$ and, similarly as in the previous subsection,
say it is conjugate to a sum of blocks
\beg{SemiSimpElt}{s\sim A_{1}^{\oplus p} \oplus A_{-1}^{\oplus \ell} \oplus \bigoplus_{i=1}^r A_{\lambda_i}^{\oplus j_i} \oplus \bigoplus_{i=1}^t A_{\mu_i}^{\oplus k_i}}
for distinct eigenvalues $\lambda_i$, $\mu_i$ as in \rref{DistinctNontrivEigenvs}.
Again the matrix size gives an expression of $m$ as the sum
of products of field extension degrees and multiplicities \rref{Rankmatrizsizesemisimpdata}.
The centralizer of $s$ in $\text{Sp}_{2m} (\F_q)$ is
\beg{CentralizerinSp2mFq}{\prod_{i=1}^r U^+_{j_i} (\F_{q^{n_i'}}) \times \prod_{i=1}^t U^-_{k_i}
 (\F_{q^{n_i''}}) \times
\text{Sp}_{2\ell} (\F_q) \times \text{Sp}_{2p} (\F_q).}
Note that this is always connected for any choice of $(s)$.

Given a choice of such semisimple data $(s)\in \text{Sp}_{2m} (\F_q)$,
let us consider
an irreducible unipotent representation $u$ of the dual of $s$'s connected
centralizer \rref{CentralizerinSp2mFq}
(taking the dual replaces each symplectic group by the corresponding odd special orthogonal group of the
same rank and leaves all other factors the same).
Then $u$ may be expressed as a tensor product of unipotent representations
of each factor of \rref{CentralizerinSp2mFq}
\beg{UnipotentRepSp2mFqCentra}{\bigotimes_{i=1}^r u_{U^+_{j_i}} \otimes\bigotimes_{i=1}^t u_{U^-_{k_i}} \otimes u^{-1}_{\text{SO}_{2\ell +1}} \otimes u^{+1}_{\text{SO}_{2p+1}},}
where again, $u_{U^+_{j_i}}$, $u_{U^-_{k_i}}$,
$u^{-1}_{\text{SO}_{2\ell +1}}$, and $u^{+1}_{\text{SO}_{2p +1}}$ are unipotent irreducible representations of
$U^+_{j_i} (\F_{q^{n_i'}})$, $U^-_{k_i}(\F_{q^{n_i''}})$,
$\text{SO}_{2\ell+1} (\F_q)$, and $\text{SO}_{2p+1} (\F_q)$, respectively
(the superscript for the $C$-type factors indicates the sign of the eigenvalue $\pm 1$
of the blocks in $s$ which the factor corresponds to).
Both $u_{\text{SO}_{2\ell+1}}^{-1}$ and $u_{\text{SO}_{2p+1}}^{+1}$ correspond to symbols of type
$B$ of rank $\ell$ and $p$ respectively, as defined in Definition \ref{SymbolsDefn},
part (\ref{OddSymbolsItem}).

The irreducible $\text{SO}_{2m+1}(\F_q)$ representation $r^{\text{SO}_{2m+1} (\F_q)}[ (s), u]$ corresponding to 
such a choice of $(s)$, $u$ has dimension equal to the dimension of $u$ multiplied by the
prime to $q$ part of the quotient of orders
\beg{indexSo2m+1}{\frac{|\text{SO}_{2m+1} (\F_q)|_{q'}}{| \prod_{i=1}^r U^+_{j_i} (\F_{q^{n_i'}}) \times \prod_{i=1}^t U^-_{k_i} (\F_{q^{n_i''}}) \times
\text{SO}_{2\ell+1} (\F_q) \times \text{SO}_{2p+1} (\F_q)|_{q'}}.}
According to \rref{ROExtSignData}, for both choices of extension sign data $\pm 1$, 
$$\text{dim} (r^{\text{O}_{2m+1} (\F_q)} [(s), u]^{\pm 1}) =
\text{dim} (r^{\text{SO}_{2m+1} (\F_q)} [(s), u]).$$ 
Call the collection of data $[(s), u]$ together with an extension sign $\pm 1$
{\em $\text{\em O}_{2m+1} (\F_q)$-extended classification data}.

\subsection{The representation theory of the even orthogonal groups}\label{EvenSOIrredSubSect}
Finally, we consider the special orthogonal group on a $2m$-dimensional $\F_q$-vector
space $W$ with respect to a symmetric bilinear form $B$. Write
$\text{O}(W,B) = \text{O}^\alpha_{2m} (\F_q)$,
with sign $\alpha = +$ if $B$ is totally split (i.e. there is an $m$-dimensional
isotropic subspace of $W$), and with sign $\alpha = -$ otherwise. We call this sign $\alpha$
the {\em total sign} of the symmetric bilinear form $B$.

To classify an irreducible representation $\rho \in \widehat{\text{O}_{2m}^\alpha (\F_q)}$,
consider
the semisimple and unipotent $\text{SO}_{2m}^\alpha (\F_q)$-classification data $(s)$ and $u$
such that $\rho$ is a summand of the corresponding
representation's induction to $\text{O}^\alpha_{2m} (\F_q)$
\beg{IndSO2mAlph}{\rho \subseteq \text{Ind}_{\text{SO}_{2m}^\alpha (\F_q)}^{\text{O}^\alpha_{2m} (\F_q)} (
r^{\text{SO}_{2m}^\alpha (\F_q)}[(s),u]).}
The induction on the right hand side of \rref{IndSO2mAlph} may decompose into $1$, $2$, or
$4$ irreducible representations, which we enumerate according
to {\em $\text{\em O}_{2m}^\alpha (\F_q)$-extension data}. We note that unlike in the odd orthogonal choice
there is not an obvious natural choice of how to do this.

Therefore the $\text{O}_{2m}^\alpha (\F_q)$-classification
data can be considered as to consist of semisimple part the conjugacy class of $s$
as an element of $\text{O}_{2m}^\alpha (\F_q)$, unipotent part $u$, and this extension
sign data.
(Note that the unipotent representations of a group are the same after removing the center,
and for simplicity to compare with the
odd case, we may consider unipotent parts of classification data as irreducible
unipotent representations of $\text{SO}_{2m}^\alpha (\F_q)$.)

\vspace{1mm}

Let us begin by describing the special orthogonal group's classification data.
The group $\text{SO}_{2m}^\alpha (\F_q)$ is its own dual, so the semisimple component of its
classification data only consists of a semisimple conjugacy class $(s) \in \text{SO}_{2m}^\alpha (\F_q)$.
Say $(s)$ is conjugate to a sum of blocks
\beg{sSO2mAlphaBlocks}{s\sim A_{1}^{\oplus p} \oplus A_{-1}^{\oplus \ell} \oplus \bigoplus_{i=1}^r A_{\lambda_i}^{\oplus j_i} \oplus \bigoplus_{i=1}^t A_{\mu_i}^{\oplus k_i}}
for eigenvalues as in \rref{DistinctNontrivEigenvs},
ranks such that $m$ equals \rref{Rankmatrizsizesemisimpdata}.
%
%
%
%
%
%
Then the identity component of the centralizer of $s$ is
\beg{EvenCentralizerChoices}{
\prod_{i=1}^r U_{j_i}^+ (\F_{q^{n_i'}}) \times \prod_{i=1}^t U_{k_i}^- (\F_{q^{n_i''}})
\times \text{SO}_{2 \ell}^\pm(\F_q) \times \text{SO}_{2p}^\pm (\F_q)}
(where, the total product of signs appearing in \rref{EvenCentralizerChoices} is the total sign of $B$).

Given such semisimple data $(s) \in \text{SO}_{2m}^\alpha (\F_q)$,
the unipotent part of the $\text{SO}_{2m}^\alpha (\F_q)$-classification data
consists of a unipotent irreducible representation $u$ of the identity component of $s$'s centralizer
\rref{EvenCentralizerChoices}, which can be
factored as
\beg{EvenUnipotentFactorization}{ \bigotimes_{i=1}^r u_{U_{j_i}^+} \otimes \bigotimes_{i=1}^t u_{U_{k_i}^-}
\otimes u_{\text{SO}_{2\ell}^\pm}^{-1} \otimes u_{\text{SO}_{2p}^\pm}^{+1},}
using the same notation as in the case of the odd special orthogonal
groups.

Again, the $\text{SO}_{2m}^\pm (\F_q)$-representation $r^{\text{SO}_{2m}^\pm (\F_q)} [(s), u]$
corresponding to such a choice of $(s), u$ is of dimension $\text{dim} (u)$ multiplied by
\beg{}{\begin{array}{c}
|\text{SO}_{2m}^\pm (\F_q)|_{q'}/ |Z_{\text{SO}_{2m}^\pm (\F_q)} (s)^\circ |_{q'} = \\[1ex]
\displaystyle \frac{|\text{SO}_{2m}^\pm (\F_q)|_{q'}
}{|\prod_{i=1}^r U_{j_i}^+ (\F_{q^{n_i'}}) \times \prod_{i=1}^t U_{k_i}^- (\F_{q^{n_i''}})
\times \text{SO}_{2\ell}^\pm (\F_q) \times \text{SO}_{2p}^\pm (\F_q)|_{q'}.}
\end{array}}
The unipotent representations $u_{\text{SO}_{2\ell}^\pm}^{-1}$,
$u_{\text{SO}_{2p}^\pm}^{+1}$ correspond to the symbols of type $D$ or ${}^2 D$ of ranks
$\ell$ and $p$.


The dimension of the unipotent represent associated to a symbol 
${\lambda_1< \dots <\lambda_a \choose \mu_1 < \dots < \mu_b}$ of type
$D$ of ${}^2 D$ and rank $p$
is again the factor \rref{SymbolDimTwoRowsBigFrac},
multiplied by
\beg{DTypeFactorSymbol}{\frac{|SO_{2p}^\pm (\F_q)|_{q'}}{2^{(a+b-2)/2}}}
(in the case of $SO_{2p}^+ (\F_q)$, if the two rows of the symbol are exactly the same,
it is called {\em degenerate}, and splits into two additional equi-dimensional non-isomorphic halves).

%

It remains to describe the decomposition of the induction
\beg{InduceSemisimplUnipotentSO2r2O2r}{\text{Ind}_{\text{SO}_{2m}^\pm (\F_q)}^{\text{O}^\pm_{2m} (\F_q)} (
r^{\text{SO}_{2m}^\pm (\F_q)}[(s),u])}
First, we notice that as long as $(s)$ has some eigenvalues not equal to $\pm 1$, there
are choices of semisimple data $(s')$ for $\text{SO}_{2m}^\pm (\F_q)$ such that $s$ and $s'$ are not
conjugate in the special orthogonal group, but they are conjugate in $\text{O}_{2m}^\pm (\F_q)$
(precisely since replacing $A_\lambda$ by $A_{\lambda^{-1}}$).
For such cases, we find
$$\text{Ind}_{\text{SO}_{2m}^\pm (\F_q)}^{\text{O}_{2m}^\pm (\F_q)} ( r^{\text{SO}_{2m}^\pm (\F_q)} [(s), u])
\cong \text{Ind}_{\text{SO}_{2m}^\pm (\F_q)}^{\text{O}_{2m}^\pm (\F_q)} ( r^{\text{SO}_{2m}^\pm (\F_q)} [(s'), u]).
$$

We also see another effect. Inducing an irreducible unipotent representation splits
into two non-isomorphic equidimensional irreducible unipotent representations
$$\begin{array}{c}
 \text{Ind}_{\text{SO}_{2m}^\pm (\F_q)}^{\text{O}_{2m}^\pm (\F_q)} ({\lambda_1 < \dots < \lambda_a
\choose \mu_1< \dots < \mu_b}) = \\[1ex]
 {\lambda_1 < \dots < \lambda_a
\choose \mu_1< \dots < \mu_b}^{+} \oplus {\lambda_1 < \dots < \lambda_a
\choose \mu_1< \dots < \mu_b}^{-}.
\end{array}$$
We note that, in the split case,
for both irreducible $\text{SO}_{2m}^+ (\F_q)$-summands of the unipotent representation
corresponding to the degenerate symbol ${\lambda_1 < \dots < \lambda_a \choose \lambda_1 <\dots < \lambda_a}$,
their inductions to an $\text{O}_{2m}^+ (\F_q)$ are irreducible and isomorphic.
Ultimately, we find that \rref{InduceSemisimplUnipotentSO2r2O2r}
splits into $2^{a (s) +b (s)}$ irreducible equidimensional $\text{O}_{2m}^\pm (\F_q)$-representations,
where we put
$$\begin{array}{c}
a (s) = \begin{cases}
1, \text{ if } 1 \text{ is an eigenvalue of } s \text{ and } u_{SO_{2p}^\pm}^1 \text{ is non-degenerate}\\
0, \text{ else}
\end{cases}\vspace{1mm}\\[1ex]
b (s) = \begin{cases}
1, \text{ if } -1 \text{ is an eigenvalue of } s \text{ and } u_{SO_{2\ell}^\pm}^{-1} \text{ is non-degenerate}\\
0, \text{ else.}
\end{cases}
\end{array}$$
To summarize, we may enumerate the irreducible representations of $\text{O}_{2m}^\pm (\F_q)$
according to the {\em $\text{O}_{2m}^\alpha (\F_q)$-extended classification data}
consisting of 
\begin{enumerate}
\item The {\em semisimple data}
of the conjugacy class of a semisimple element $s \in \text{SO}_{2m}^\pm (\F_q)$,
under conjugacy by elements in the full orthogonal group $\text{O}_{2m}^\pm (\F_q)$.

\item The {\em unipotent data}
of a unipotent representation of the dual of the identity component of $s$'s centralizer
$(Z_{\text{SO}_{2m}^\pm (\F_q) }(s)^\circ)^*$, consisting of a tensor product
of symbols, allowing the 
possible degenerate symbols (which we do not decompose in order to avoid 
over-counting).

\item The {\em extension sign data} $\gamma$, consisting of $a (s) + b (s)$
independent choices of sign. If $s$ has both
$+1$ and $-1$ eigenvalues, we write $\gamma = (\pm 1, \pm 1)$, listing the sign associated to
the presence of $1$ eigenvalues first. When only one of $a(s)$ and $b (s)$ is
non-zero, we write $\gamma $ as the single choice of sign itself. When both $a (s)$
and $b (s)$ are $0$.
\end{enumerate}
We denote the corresponding irreducible $\text{O}_{2m}^\alpha (\F_q)$-representation by
$r^{\text{O}_{2m}^\alpha (\F_q)} [ (s), u]^\gamma$.

%
%

\section{The claimed construction}\label{LusztigClaimSect}

The purpose of this section is to describe in more detail the claimed
construction that we are proposing gives the eta and zeta correspondences.
This will identify the ``top" partner of an input representation appearing
in the Howe correspondence of S.-Y. Pan \cite{Pan1, Pan2} (we discuss this in \cite{TotalHoweIII} in more detail).
We define the proposed constructions
$$\phi^V_{W,B}: \widehat{\text{O}(W,B)} \hookrightarrow \widehat{\text{Sp}(V)}$$
in the symplectic stable range, and
$$\psi^{W,B}_V: \widehat{\text{Sp}(V)} \hookrightarrow \widehat{\text{O}(W,B)}$$
in the orthogonal stable range.

We recall now that 
the unipotent irreducible representations of a
group $G$ are the same as the unipotent irreducible representations of its dual $G^*$.
For an irreducible unipotent representation
$u$ of $G$, we denote by $\widetilde{u}$ the corresponding irreducible unipotent representation
of $G^*$.

In Subsection \ref{OddSympStSubSect}, we treat the case of 
$(\text{Sp}(V), \text{O}(W,B))$ in the symplectic 
symplectic stable range where the dimension of $W$ is odd.
In Subsection \ref{EvenSympStSubSect}, we treat the case of the
symplectic stable range where the dimension of $W$ is even.
In Subsection \ref{OddOrthoStSubSect}, we treat the case of 
a reductive dual pair $(\text{Sp}(V), \text{O}(W,B))$ in the orthogonal
stable range where the dimension of $W$ is odd.
In Subsection \ref{EvenOrthoStSubSect}, we treat the case of the
orthogonal stable range where the dimension of $W$ is even.

\subsection{The odd symplectic stable case}\label{OddSympStSubSect}
Consider a choice of type I reductive dual pair
$(\text{Sp}(V), \text{O}(W,B))$ in the symplectic stable range
for odd-dimensional $W$. Write $\text{dim} (V) = 2N$, $\text{dim} (W) = 2m+1$. The range condition
requires that $N \geq 2m+1$. 

In this case, the center splits off of the orthogonal group, and
we may consider $\text{O}(W,B) = \Z/2 \times \text{SO}_{2m+1} (\F_q)$.
Our goal is to define a construction whose input is an irreducible representation of
$\pi = r^{\text{O}_{2m+1} (\F_q)} [(s), u]^{\pm 1}$ of $\text{O}(W,B)$ 
for $O_{2m+1} (\F_q)$-extended classification data
consisting of a semisimple conjugacy class 
$(s)$ in the dual group $\text{SO}_{2m+1}^* (\F_q) = \text{Sp}_{2m} (\F_q)$,
an irreducible unipotent
representation $u$ of $(Z_{\text{Sp}_{2m} (\F_q)} (s)^\circ )^*$, and a sign $\pm 1$;
the output should be a unique irreducible representation $\phi^V_{W,B} (\pi)$
of $\text{Sp}(V) = \text{Sp}_{2N} (\F_q)$.
In other words, we must produce from $(s)$, $u$ and the sign $\pm 1$, a choice of new 
$\text{Sp}_{2N} (\F_q)$-classification data:
\beg{NewLusztigClassDataOddSympSt}{[(\phi^\pm (s)), \phi^\pm (u), \text{disc}(B) \cdot \varepsilon (s)]}
(consisting of semisimple $(\phi^\pm (s)) \in \text{Sp}_{2N}^* (\F_q) = \text{SO}_{2N+1} (\F_q)$
and an irreducible unipotent representation $\phi^\pm (u)$ of its 
the dual of the identity component of its centralizer).
Broadly, we construct $\phi^\pm (s)$ by adding $-1$ eigenvalues to $s$, and we alter the symbol
in the affected factor of the unipotent part by adding a single coordinate to one of the rows
(to obtain the needed rank and defect).

\vspace{3mm}


We begin with describing the semisimple part $(\phi^\pm (s))$. Considering
$s$ as an element of a maximal torus of the form \rref{ToriInAll}, and it is determined by the data
of the orbit of its eigenvalues.
Recalling Definition
\ref{SigmanDefn}, we then define $\phi^\pm (s)$ to be the semisimple element
$$\begin{array}{c}
\displaystyle \phi^\pm (s) := s\oplus \sigma_{N-m}^\pm \in \prod_{i=1}^k \text{SO}_{2}^\pm (\F_{q^{n_i}})
\times \text{SO}_{2(N-m)+1} (\F_q) \\
\vspace{1mm} \subseteq \text{SO}_{2N+1} (\F_q) = \text{Sp}_{2N }^* (\F_q).
\end{array}$$
On the level of eigenvalues, $\phi^\pm (s)$ are obtained precisely by adding $2(N-m)$ eigenvalues
$-1$ and a single $1$ eigenvalue to $s$'s original eigenvalues, in a position where projecting away from
the coordinate of the $1$ eigenvalue gives a subspace of $W$ where $B$ is completely split
if the sign is $+$ and non-split if the sign if $-$.
Suppose $s$ is of type $(p, \ell)$, i.e. $s$
has eigenvalue $-1$ of multiplicity $2\ell$, and $1$ of multiplicity $2p$, and the identity component of
its centralizer is of the form \rref{CentralizerinSp2mFq}.
For simplicity, let us separate the factors corresponding to the eigenvalues of $s$ not equal to $-1$
$$H= \prod_{i=1}^r U_{j_i}^+ (\F_{q^{n_i'}}) \times \prod_{i=1}^t U_{k_i}^- (\F_{q^{n_i''}})  \times \text{Sp}_{2p} (\F_q),$$
so that $Z_{\text{Sp}_{2m} (\F_q)} (s)^\circ = H \times \text{Sp}_{2\ell} (\F_q)$.
The identity component of the centralizer of this element $\phi^\pm (s)$ in $\text{SO}_{2N+1} (\F_q)$ is precisely
\beg{IdComponentZSO2N+1PhiPmHSOEv}{\begin{array}{c}
\displaystyle Z_{\text{SO}_{2N+1} (\F_q)} (\phi^\pm (s))^\circ = 
 H^* \times \text{SO}_{2(N-m+\ell)}^\pm (\F_q).
\end{array}}

Now let us describe the unipotent part $\phi^\pm (u)$ of \rref{NewLusztigClassDataOddSympSt}.
Let us factor $u$ as in \rref{UnipotentRepSp2mFqCentra}, and let us consider ths symbol
$ u_{\text{SO}_{2\ell+1}}^{-1} = {\lambda_1< \dots < \lambda_a \choose \mu_1< \dots < \mu_b}$
and write $u_H$ for the unipotent $H$-representation consisting of a product of the other factors $\bigotimes_{i=1}^r u_{U^+_{j_i}} \otimes\bigotimes_{i=1}^t u_{U^-_{k_i}} \otimes u^{+1}_{\text{SO}_{2p+1}}$,
so that we can write
$u = u_H \otimes {\lambda_1< \dots < \lambda_a \choose \mu_1< \dots < \mu_b}$.
The defect $a-b$ of the symbol  is odd, so we may switch rows to assume
without loss of generality that $a-b$ is $1$ mod $4$.
We define a certain natural number associated to $\pi = r^{\text{O}_{2m+1} (\F_q)} [(s), u]^{\pm 1}$
$$N'_{\pi} = N-m + \frac{a+b-1}{2}$$
(note that by the symplectic stable range condition, we automatically have $N'_{\pi}
\geq N-m \geq m+1$).
Then we may concatenate $N'_{\pi}$ onto the end of either row of the symbol
${\lambda_1< \dots < \lambda_a \choose \mu_1< \dots < \mu_b}$, obtaining new symbols
$$\textstyle\phi^+ ({\lambda_1< \dots < \lambda_a \choose \mu_1< \dots < \mu_b}) = 
{\lambda_1< \dots < \lambda_a \choose \mu_1< \dots < \mu_b < N'_{\pi}},$$
describing a unipotent representation of $\text{SO}_{2(N-m+\ell)}^+ (\F_q)$ and
$$\textstyle  \phi^- ({\lambda_1< \dots < \lambda_a \choose \mu_1< \dots < \mu_b}) = 
{\lambda_1< \dots < \lambda_a < N'_\pi \choose \mu_1< \dots < \mu_b },$$
describing a unipotent representation of $\text{SO}_{2(N-m+\ell)}^- (\F_q)$.
We then put
$$\textstyle \phi^\pm (u) := \widetilde{u_H} \otimes \phi^\pm ({\lambda_1< \dots < \lambda_a \choose \mu_1< \dots < \mu_b}),$$
giving a unipotent representation of the group \rref{IdComponentZSO2N+1PhiPmHSOEv}.

Finally, we need a central sign to complete the classification
data \rref{NewLusztigClassDataOddSympSt} since by definition $\phi^\pm (s)$ has $-1$
eigenvalues.
Consider again $s$ as an element of the torus \rref{ToriInAll}.
Further, consider each factor
$\text{SO}_2^\pm (\F_{q^{r_i}}) \cong \mu_{q^{r_i} \mp 1}$.
Then define $\epsilon (s)$ to be the product of applying the quadratic character on
each $\Z/ (q^{r_i } \mp 1)$ to each coordinate, giving a total sign.
Multiplying with the discriminant $\text{disc}(B)$ gives the central sign of \rref{NewLusztigClassDataOddSympSt}.

\begin{definition}Given the above notation we define
$\phi^V_{W,B} (\pi)$ to be the irreducible $\text{Sp}(V)$-representation with the new
classification data we constructed:
$$\phi^{V}_{W,B} (r^{\text{\em O}(W,B)}[(s),u]^{\pm 1}) := r^{\text{\em Sp} (V)} [(\phi^\pm (s)), \phi^\pm (u),
\text{\em disc}(B) \cdot \epsilon (s)].$$
\end{definition}

\subsection{The even symplectic stable case}\label{EvenSympStSubSect}
Now suppose the reductive dual pair
$(\text{Sp}(V), \text{O}(W,B))$ is in the symplectic stable case and
$W$ is even dimensional. Write $\text{dim} (V) = 2N$ and $\text{dim} (W) = 2m$.
Write $\alpha$ for the sign
so that $\text{O} (W,B) = \text{O}^\alpha_{2m} (\F_q)$. In both cases, the orthogonal stable range
condition requires that $N \geq 2m$.

Fix an input irreducible $\text{O}_{2m}^\alpha (\F_q)$-representation $\pi = r^{\text{O}_{2m}^\alpha (\F_q)} [(s), u]^\gamma$, taking $(s)$ to be a $\text{O}_{2m}^\alpha (\F_q)$-conjugacy class
of a semisimple element $s\in \text{SO}_{2m}^\alpha (\F_q)$, a compatible unipotent
representation $u$ of the dual of 
the identity component of $s$'s centralizer $(Z_{\text{SO}_{2m}^\alpha (\F_q)} (s)^\circ)^*$,
and possible extension sign data $\gamma$ consisting of $a (s) +b (s)$ signs.
Write $\gamma = (\alpha, \beta)$, denoting the sign corresponding to possible
$1$ eigenvalues by $\alpha$ and the sign corresponding to possible $-1$ eigenvalues by $\beta$.
Broadly, we obtain new $\text{Sp}_{2N} (\F_q)$-classification data
$$[(\phi(s)),  \phi^\alpha (u), \beta ]$$
by adding $1$ eigenvalues to $s$, then altering the affect factor of the unipotent part
by adding a single coordinate to one row of the symbol (according to $\alpha$ if it occurs
which is precisely when there is a choice), and keeping the original $-1$ part $\beta$ of the sign data if it occurs.

\vspace{3mm}

Consider, again, $s$ as an element of a torus \rref{ToriInAll}.
One may take a direct sum with the identity matrix $I_{2(N-m)+1}$ to obtain a semisimple element
$$\begin{array}{c}
\displaystyle\phi (s) = s \oplus I_{2(N-m)+1} \in \prod_{i=1}^k \text{SO}_{2}^\pm (\F_{q^{n_i}})
\times \text{SO}_{2(N-m)+1} (\F_q) \\
\vspace{1mm} \subseteq \text{SO}_{2N+1} (\F_q) = \text{Sp}_{2N }^* (\F_q).
\end{array}$$
(Note that each different class $(s)$ considered as a conjugacy class in $\text{O} (W,B)$
corresponds to a different $\phi(s)$,
whereas if we only considered $(s)$ as a conjugacy class in $\text{SO}(W,B)$,
in cases with eigenvalues not equal to $\pm 1$, there would be another $\text{SO}(W,B)$-conjugacy class
$(s')$ with $(\phi (s)) = (\phi (s'))$ in $\text{SO}_{2N+1} (\F_q)$.)


Suppose $s$ is of type $(p, \ell)$, i.e. $s$
has $1$ as an eigenvalue of multiplicity $2p$ and $-1$ as an eigenvalue of multiplicity $2\ell$,
and suppose its centralizer is of the form \rref{EvenCentralizerChoices}.
We separate out the factors corresponding to
eigenvalues not equal to $1$ by writing
$$H= \prod_{i=1}^r U_{j_i}^+ (\F_{q^{n_i'}}) \times \prod_{i=1}^t U_{k_i}^- (\F_{q^{n_i''}})
\times \text{SO}_{2 \ell}^\pm(\F_q) $$
so that we have $Z_{\text{SO}_{2m}^\alpha (\F_q)} (s)^\circ = H \times \text{SO}_{2p}^\pm (\F_q),$
and then
\beg{IdComponentZSO2NPhiHSOEv}{Z_{\text{SO}_{2N+1} (\F_q)} (\phi(s))^\circ = H^* \times \text{SO}_{2(N-m+p)+1} (\F_q)}
(note that in this case $H = H^*$).
Factoring $u$ as in \rref{EvenUnipotentFactorization},
let us consider the symbol of the factor corresponding to the $1$ eigenvalue
$u_{\text{SO}_{2p}^\pm}^{+1} = {\lambda_1< \dots < \lambda_a \choose \mu_1< \dots < \mu_b}$
(considering the trivial representation in the case of $p=0$ as ${\emptyset \choose \emptyset}$)
and write $u^{H^*}$
for the representation of $H = H^*$ consisting of the other factors
$\bigotimes_{i=1}^r u_{U_{j_i}^+} \otimes \bigotimes_{i=1}^t u_{U_{k_i}^-}
\otimes u_{\text{SO}_{2\ell}^\pm}^{-1}$.
Suppose first that ${\lambda_1< \dots < \lambda_a \choose \mu_1< \dots < \mu_b}$ is non-degenerate.
In the case where the $1$ eigenvalues correspond to a non-split factor $SO_{2p}^- (\F_q)$ of
the identity component of $s$'s centralizer,
then switch rows so that $a-b+1$ is $1$ mod $4$.
In the case where the $1$ eigenvalues correspond to a split factor $SO_{2p}^+ (\F_q)$ of
the identity component of $s$'s centralizer,
then switch rows so that either $a>b$ or, if $a=b$, for the maximal $i$ such that
$\lambda_i \neq \mu_i$, we have $\lambda_i> \mu_i$.
%
Write
$$N_\pi' = N-m +\frac{a+b}{2}.$$
Then the symbols
${\lambda_1< \dots < \lambda_a < N'_\rho \choose
\mu_1< \dots < \mu_b}$, ${\lambda_1< \dots < \lambda_a \choose
\mu_1< \dots < \mu_b <N'_\rho}$
define unipotent representations of
$\text{SO}_{2(N-m+p)+1}^* (\F_q) = \text{Sp}_{2(N-m+p)} (\F_q)$.
Therefore,
$$\begin{array}{c}
\phi^- (u) = \widetilde{u_{H^*}} \otimes {\lambda_1< \dots < \lambda_a < N'_\pi \choose
\mu_1< \dots < \mu_b}\\[1ex]
\phi^+ (u) = \widetilde{u_{H^*}} \otimes {\lambda_1< \dots < \lambda_a \choose
\mu_1< \dots < \mu_b < N'_\pi}
\end{array}$$
respectively define irreducible unipotent representations of the dual group
\rref{IdComponentZSO2NPhiHSOEv}.
We use $\phi^\alpha (u)$ for the new unipotent data, again writing $\alpha$ for the component
of the $O_{2m}^\pm (\F_q)$-extension sign data corresponding to the $1$ eigenvalues of $s$.

Now suppose
$u_{\text{SO}_{2p}^\pm}^{+1} = {\lambda_1< \dots < \lambda_a \choose
\lambda_1< \dots < \lambda_a}$
is degenerate (counting the case of ${\emptyset\choose \emptyset}$ for $p=0$). These
are precisely the cases where there is no sign corresponding to $1$ eigenvalues
in the $O_{2m}^\pm (\F_q)$-extension sign data. In this case, we put
$$\textstyle \phi (u) = \widetilde{u_H} \otimes {\lambda_1< \dots < \lambda_a <N_\pi' \choose
\lambda_1< \dots < \lambda_a}$$

Finally, to define an output irreducible $\text{Sp}_{2N} (\F_q)$-representation, we 
need to also choose output
central sign data precisely when $\phi (s)$ has $-1$ eigenvalues. By definition, $\phi (s)$ has the same number
of $-1$ eigenvalues as $s$. Therefore, in this case, the original $s$ has $-1$ eigenvalues also,
so the $\text{O}(W,B)$-classification data supplies us with the data of one more central sign
$\beta$, which we use as the output central sign data.

\begin{definition}
Given the above notation, we define $\phi^V_{W,B} (\pi)$ to be the irreducible
$\text{Sp}(V)$-representation with the new classification data we constructed
\beg{EtaDefnSympMetastEven}{\phi^V_{W,B} (r^{\text{O}(W,B)} [(s),u]^\gamma) = 
r^{\text{Sp} (V)} [(\phi (s)), \phi^\alpha (u), \beta ]}
writing $\gamma = (\alpha, \beta)$ for the extension sign data (listing the sign
associated to $1$ eigenvalues before the $-1$ eigenvalues, and omitting either sign
if the corresponding eigenvalue is not present for $s$).
\end{definition}

\subsection{The odd orthogonal stable case}\label{OddOrthoStSubSect}

Suppose $(\text{Sp}(V), \text{O}(W,B))$ is in the orthogonal stable case and $W$ is odd dimensional.
Write $\text{dim} (V) = 2N$ an $\text{dim} (W) = 2m+1$. In
this case, the range condition gives that $m\geq 2N$.

Consider an irreducible representation $\rho=r^{\text{Sp}_{2N} (\F_q)} [(s),u, \pm 1]$ of $\text{Sp}(V)=\text{Sp}_{2N} (\F_q)$ corresponding to the classification data
consisting of a semisimple conjugacy class $(s) \in \text{Sp}_{2N }^*(\F_q) =
\text{SO}_{2N+1} (\F_q)$, a unipotent representation $u$ of
$(Z_{\text{SO}_{2N+1} (\F_q)} (s)^\circ)^*$, and central sign data $\pm 1$ (which we omit
if $-1$ is not an eigenvalue of $s$).
Now our goal is to specify an
irreducible representation $\psi^{W,B}_V (\rho)$ of $\text{O}_{2m+1} (\F_q)$,
by describing $\text{O}_{2m+1} (\F_q)$-extended classification data consisting of
a semisimple conjugacy class $(\psi (s)) \in \text{SO}_{2m+1}^* (\F_q) =\text{Sp}_{2m} (\F_q)$,
a unipotent irreducible representation $\psi^\pm (u)$ of $(Z_{\text{Sp}_{2m} (\F_q)} (\psi (s))^\circ )^*$
(with the superscript $\pm$ specified by the central sign data of the input classification data
and omitting if no such data is given),
and taking $\text{O}_{2m+1} (\F_q)$-extension data corresponding to the sign
$(\epsilon (s) \cdot \text{disc}(B) )$.
Broadly, in this case, we construct $\psi (s)$ by adding $-1$ eigenvalues to $s$
and adding a single new coordinate to the symbol corresponding to the affected factor of
the unipotent part (to achieve
the new needed rank and defect), according the central sign data of $\rho$ if it occurs.

\vspace{3mm}

To be more specific, recall again that we can consider $s$ as an element of a torus of the form \rref{ToriInAll},
by removing the single ``forced"
eigenvalue $1$ from $s$. Write
$\widetilde{s}$ for the $2N$ by $2N$ matrix obtained in this way.
Taking a direct sum with $-I_{2(m-N)}$,
$$\psi (s) := \widetilde{s} \oplus (-I)_{2(m-N)} $$
specifies a semisimple element of $\text{Sp}_{2m} (\F_q) $,
which has $-1$ as an eigenvalue of multiplicity $2(m-N+\ell )$. 

Say that $s$ has $-1$ as an eigenvalue of multiplicity $2\ell$ so that the identity component
of its centralizer is of the form
\rref{SO2lPmInCentForSO2N+1}. Again, let us separate the factors corresponding to eigenvalues
not equal to $-1$ and write
$$H = \prod_{i=1}^r U_{j_i}^+ (\F_{q^{n_i'}}) \times\prod_{i=1}^t U_{k_i}^- (\F_{q^{n_i''}})
\times \text{SO}_{2p+1}(\F_q)$$
so that $Z_{\text{SO}_{2N+1} (\F_q)} (s)^\circ = H \times \text{SO}_{2\ell}^\pm (\F_q)$
and
\beg{IdComponentZSp2mPsiHSOEv}{Z_{\text{Sp}_{2m}(\F_q)} (\psi (s))^\circ = H^* \times \text{Sp}_{2(m-N+\ell)} (\F_q).}

For the unipotent part of the classification data of $\rho$, write its factorization
as in \rref{SymplecticRepsUnipFactors}. Specially consider the symbol corresponding to the factor
$u_{\text{SO}_{2\ell}^\pm } =  {\lambda_1< \dots < \lambda_a \choose
\mu_1< \dots < \mu_b}$
and write $u^{H^*}$ for the unipotent $H^*$-representation corresponding to the
rest of the factors $\bigotimes_{i=1}^r u_{U^+_{j_i}} \otimes\bigotimes_{i=1}^t u_{U^-_{k_i}}\otimes u_{\text{Sp}_{2p}}$.
Again, first suppose the symbol ${\lambda_1< \dots < \lambda_a \choose
\mu_1< \dots < \mu_b}$ is non-degenerate. In the case where the $1$ eigenvalues correspond to a non-split factor $\text{SO}_{2\ell}^- (\F_q)$ of
the identity component of $s$'s centralizer,
then switch rows so that $a-b+1$ is $1$ mod $4$.
In the case where the $1$ eigenvalues correspond to a split factor
$\text{SO}_{2\ell}^+ (\F_q)$ of
the identity component of $s$'s centralizer,
then switch rows so that either $a>b$ or, if $a=b$, for the maximal $i$ such that
$\lambda_i \neq \mu_i$, we have $\lambda_i> \mu_i$.
Let us write
$$m'_\rho = m-N + \frac{a+b}{2}.$$
By the orthogonal stable range condition, we must have
$\lambda_a< m'_\rho$ and $\mu_b< m'_\rho$. Concatenating $m'_\rho$
to the end of one of the rows, we obtain symbols
${\lambda_1< \dots < \lambda_a < m'_\rho\choose
\mu_1< \dots < \mu_b}$, ${\lambda_1< \dots < \lambda_a \choose
\mu_1< \dots < \mu_b < m'_\rho}$
which have odd defect and rank precisely equal to $m-N+\ell$, and therefore specify
irreducible unipotent representations of $\text{SO}_{2(m-N+\ell)+1} (\F_q)
= \text{Sp}_{2(m-N+\ell)}^* (\F_q)$.
Putting
$$\begin{array}{c}
\psi^- (u) = \widetilde{u_{H^*}} \otimes {\lambda_1< \dots < \lambda_a < m'_\rho\choose
\mu_1< \dots < \mu_b}\\[1ex]
\psi^+ (u) = \widetilde{u_{H^*}} \otimes {\lambda_1< \dots < \lambda_a \choose
\mu_1< \dots < \mu_b < m'_\rho}
\end{array}$$
we obtain unipotent representations of \rref{IdComponentZSp2mPsiHSOEv}.

Suppose now that $u_{\text{SO}_{2\ell}^\pm} = {\lambda_1< \dots < \lambda_a \choose \lambda_1<\dots<\lambda_a}$ is degenerate (including the case of the trivial representation
${\emptyset\choose \emptyset}$ for $\ell = 0$). This is precisely the case where no central sign
data is provided in $\rho$'s original $\text{Sp} (V)$-classification data. In this case put
$$\textstyle \psi (u) = \widetilde{u_{H^*}} \otimes {\lambda_1< \dots < \lambda_a < m_\rho' 
\choose \lambda_1<\dots < \lambda_a}.$$


\begin{definition}
Suppose we are given the above notation.
We put 
\beg{ell=0orthoodd}{\psi^{W,B}_V (r^{\text{\em Sp} (V)} [(s),u]) = r^{\text{\em O}(W,B)}[(\psi (s)), \psi (u)]^{ \epsilon (s) \cdot \text{disc} (B)} }
or
\beg{ell>0orthoodd}{\psi^{W,B}_V (r^{\text{\em Sp} (V)} [(s),u, \pm 1]) = r^{\text{\em O}(W,B)}[(\psi (s)), \psi^\pm (u)]^{ \epsilon (s) \cdot \text{disc} (B)}.}
\end{definition}

\subsection{The even orthogonal stable case}\label{EvenOrthoStSubSect}
Suppose $(\text{Sp} (V), \text{O}(W,B))$ is in the orthogonal stable range
and $W$ is of even dimension $2m$,
Write $\text{dim} (V) = 2N$ and
fix the sign $\pm$ so that $\text{O}(W,B) =\text{O}^\pm_{2m} (\F_q)$.
Then the range condition gives $m \geq 2N$ in the split case 
$\text{O}(W,B) =\text{O}^+_{2m} (\F_q)$ and $m-1 \geq 2N$ in the non-split case
$\text{O}(W,B) =\text{O}^-_{2m} (\F_q)$.

Again, our goal is to produce a construction
with input an irreducible representation $\rho=r^{\text{Sp}_{2N} (\F_q)} [(s),u, \pm 1]$ of $\text{Sp}(V)=\text{Sp}_{2N} (\F_q)$ corresponding to the classification data
consisting of a semisimple conjugacy class $s \in \text{Sp}_{2N }^*(\F_q) =
\text{SO}_{2N+1} (\F_q)$, a unipotent representation $u$ of
$(Z_{\text{SO}_{2N+1} (\F_q)} (s)^\circ)^*$, and central sign data $\pm 1$ (which we omit
if $-1$ is not an eigenvalue of $s$).
We want to produce an irreducible $\text{O}_{2m}^\pm (\F_q)$-representation $\psi^{W,B}_V (\rho)$
which is defined by extended classification data
consisting of a semisimple $\text{O}_{2m}^\pm (\F_q)$-conjugacy class $(\psi(s)) \in \text{SO}_{2m}^\pm (\F_q)$, a unipotent 
representation $\psi(u)$ of 
the dual of the 
the identity component
of the centralizer of $\psi (s)$ in $\text{SO}_{2m}^\alpha (\F_q)$, and extension sign data depending on
whether $\psi (s)$ has $\pm 1$ eigenvalues.
Broadly, we will construct the new semisimple and unipotent parts of the $\text{O}_{2m}^\pm (\F_q)$-classification data
$$[(\psi (s)), \psi (u)]$$
by adding $1$ eigenvalues to $s$ and altering the symbol of the affect factor of the unipotent part
by adding a single new coordinate to one of the rows to achieve the new needed rank and defect.

To be more specific, 
as in the odd stable orthogonal case, we may remove a single ``forced" $1$ eigenvalue from $s$
to view it as a $2N$ by $2N$
element $\widetilde{s}$ of a maximal torus \rref{ToriInAll}.
Then consider the direct sum with the unique appropriate choice of
$2(m-N)$ by $2(m-N)$ identity matrix
$$\psi (s) = \widetilde{s} \oplus I_{2(m-N)},$$
configured to give
a $2m$ by $2m$ matrix that can be considered as an element of $\text{SO}(W,B) \subseteq \text{O}(W,B)$.
As in Subsection \ref{EvenSympStSubSect}, each distinct
$\text{SO}_{2N+1}(\F_q)$-conjugacy class
$(s)$ gives a distinct $\text{O}_{2m}^\pm (\F_q)$-conjugacy class $\psi (s)$.
Writing the identity component of the centralizer of $s$ as \rref{SO2lPmInCentForSO2N+1}, we again separate out
the factors corresponding to the eigenvalues not equal to $1$, writing
$$H =  \prod_{i=1}^r U_{j_i}^+ (\F_{q^{n_i'}}) \times\prod_{i=1}^t U_{k_i}^- (\F_{q^{n_i''}}) \times 
\text{SO}_{2\ell}^\pm (\F_q)$$
so that
$Z_{\text{SO}_{2N+1} (\F_q)} (s)^\circ = H \times \text{SO}_{2p+1} (\F_q)$
and
\beg{IdComponentZSp2mPsiHSOSigned}{Z_{\text{SO}_{2m}^\pm (\F_q)} (\psi (s))^\circ = H^* \times \text{SO}_{2(N-m+p)}^\beta (\F_q),}
for a single determined choice of sign $\beta$
(so that its product with the other signs appearing in $H$ agrees
with $\alpha$). Note that $H=H^*$ in this case.

To construct the unipotent part of the $\text{O}_{2m}^\pm (\F_q)$-classification data $\psi (u)$,
specially consider the symbol
$u_{\text{SO}_{2p+1}} = {\lambda_1 < \dots < \lambda_a \choose
\mu_1< \dots < \mu_b}$
and write $u^{H^*}$ for the product of the remaining factors 
$\bigotimes_{i=1}^r u_{U^+_{j_i}} \otimes\bigotimes_{i=1}^t u_{U^-_{k_i}} \otimes u_{\text{SO}_{2\ell}^\pm} $.
Switch the symbol rows so that the defect $a-b$ is $1$ mod $4$ (which is
possible since this symbol has odd defect). Let us write
$$m'_\rho = m-N +\frac{a+b-1}{2}.$$
Then, if $\beta = +$, if $\mu_b < m'_\rho$, putting
$$\textstyle \psi (u) = \widetilde{u_{H^*}} \otimes {\lambda_1< \dots < \lambda_a \choose
\mu_1< \dots < \mu_b < m'_\rho}$$
gives a unipotent representation of the group \rref{IdComponentZSp2mPsiHSOSigned}.
Similarly, if $\beta = -$, if $\lambda_a < m'_\rho$, putting
$$\textstyle \psi (u) = \widetilde{u_{H^*}} \otimes {\lambda_1< \dots < \lambda_a < \mu_\rho'\choose
\mu_1< \dots < \mu_b }$$
gives a unipotent representation of the group \rref{IdComponentZSp2mPsiHSOSigned}.

Now for the extension data, in the stable range $\psi (s)$ always has $1$ eignevalues
and has the same multiplicity of
$-1$ eigenvalues as the input semisimple element $s$.
First, we put the sign corresponding to the $1$ eigenvalues always to be $+1$
(when $(\text{Sp} (V), \text{O}(W,B))$ lies in the stable range).
When $\psi (s)$ has $-1$ eigenvalues, we take the sign data to match the central sign
data of the input $\text{Sp} (V)$-classification data.

\begin{definition}
Suppose we are given the above notation. We put
\beg{ZetaDefnOrthoEven}{\psi^{W,B}_V (r^{\text{\em Sp} (V)} [(s), u] ) := r^{\text{\em O}(W,B)} [(\psi (s)), \psi (u)]^{(+1)}}
(where the single piece of extension sign data corresponds to $1$ eigenvalues of $\psi (s)$) and
\beg{ZetaDefnOrthoEvenSign}{\psi^{W,B}_V (r^{\text{\em Sp} (V)} [(s), u, \pm 1] ) := r^{\text{\em O}(W,B)} [(\psi (s)), \psi (u)]^{(+1, \pm 1)}.}

\end{definition}

\section{A combinatorial identity}\label{CombinatoricsSect}

Now recalling \cite{TotalHoweI}, a key step in decomposition the restriction
of an oscillator representation $\omega [ V\otimes W]$ to $\text{Sp}(V)\times \text{O}(W,B)$
is to separate off its ``top part," which specifically singles out summands
arising from the eta or zeta correspondence with source corresponding to the
appropriate full-rank orthogonal or symplectic group, respectively. 
In the symplectic stable range, we write
$${ \omega} [V \otimes W]^{\text{top}} := \bigoplus_{\rho \in \text{O} (W,B)} 
\eta^V_{W,B} (\rho) \otimes \rho,$$
and call it the {\em top part} of $ \omega [V\otimes W]$.
Similarly, in the orthogonal stable range,
we write
$${ \omega} [V \otimes W]^{\text{top}'} := \bigoplus_{\rho \in \text{O} (W,B)} 
\rho \otimes \zeta^{W,B}_V (\rho),$$
and call it the {\em top part} of $ \omega [V \otimes W]$.

From here, the proof of Theorem \ref{EtaExplicitIntro} separates into two key steps: A combinatorial
verification that the dimension of the direct sum of 
matches the dimension of the top part of $ \omega [ V \otimes W]$,
and an inductive
argument showing that the claimed correspondence in Theorem \ref{EtaExplicitIntro} is the only possible
one. The first step is the goal of this section.

\begin{theorem}\label{DimsMatchTheoremEta}
If $(\text{Sp}(V), \text{O}(W,B))$ is in the symplectic stable range,
the dimension of the top part of the restriction of $\omega [V\otimes W]$ matches the
sum of products of the dimensions of irreducible representations of $\text{O}(W,B)$ and their
$\psi^V_{W,B}$ correspondences:
\beg{DimsForEtaIsPhi}{\text{dim} ({ \omega} [V \otimes W]^{\text{top}}) = \sum_{\rho \in \widehat{\text{O} (W,B)}}
\text{dim} (\rho) \cdot \text{dim} (\phi^V_{W,B} (\rho)).}
Similarly, if $(\text{Sp}(V), \text{O}(W,B))$ is in the orthogonal stable range,
the dimension of the top part of the restriction of $\omega [V\otimes W]$ matches the
sum of products of the dimensions of irreducible representations of $\text{Sp}(V)$ and their
$\phi^{W,B}_V$ correspondences:
\beg{DimsForZetaIsPsi}{\text{dim} ({ \omega} [V \otimes W]^{\text{top}'}) = \sum_{\rho \in \widehat{\text{Sp}(V)}}
\text{dim} (\rho) \cdot \text{dim} (\psi^{W,B}_V (\rho)).}
\end{theorem}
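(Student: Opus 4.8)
The plan is to compute both sides of \rref{DimsForEtaIsPhi} and \rref{DimsForZetaIsPsi} as explicit polynomials in $q$ and verify equality by a term-by-term comparison organized along the structure of the decompositions \rref{EtaCorrThmDecomp} and \rref{ZetaCorrThmDecomp}. First I would rewrite the left-hand side using Theorem \ref{TotalHoweITheorem}: the dimension of $\omega[V\otimes W]^{\text{top}}$, being the ``$k=0$ piece'' of the decomposition, is obtained from $dim(\omega[V\otimes W]) = q^{Nn}$ (or the appropriate power for the odd/even split of oscillator representations) by subtracting off the contributions of the parabolically induced terms with $k>0$; alternatively, and more usefully for the induction of Section \ref{InductiveProof}, one expresses $\sum_{\rho\in\widehat{O(W,B)}} dim(\rho)\cdot dim(\eta^V_{W,B}(\rho))$ directly and matches it summand-by-summand against $\sum_{\rho} dim(\rho)\cdot dim(\phi^V_{W,B}(\rho))$. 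Since the semisimple part and the sign data of $\eta^V_{W,B}(\rho)$ are forced to agree with those of $\phi^V_{W,B}(\rho)$ (by the restriction to the general linear group, as remarked after Theorem \ref{EtaExplicitIntro}), it suffices to check that the unipotent parts contribute the same dimension — i.e. to verify the combinatorial identity at the level of symbol-dimension formulas \rref{SymbolDimTwoRowsBigFrac}.

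The key computation, then, is the following: fix the input $\rho = (\pm1)\otimes\rho_{(s),u}$, and consider the single unipotent factor whose symbol $\binom{\lambda_1<\cdots<\lambda_a}{\mu_1<\cdots<\mu_b}$ gets modified. On the $\phi$ side, this factor is replaced by the symbol with $N'_{\rho}$ concatenated onto one row; on the other side, one must extract from Theorem \ref{TotalHoweITheorem} the total dimension contributed by all $\rho'$ whose $\eta$-image lies in the relevant block, which by the disjointness of images and Proposition \ref{NnRank} (in the stable range) is governed by summing the symbol-dimension formula \rref{SymbolDimTwoRowsBigFrac} over all ways of extending the symbol. The heart of the matter is therefore a pure identity among rational functions of $q$: that the ratio of symbol dimensions for a symbol of rank $\ell$ versus its extension to rank $N-m+\ell$ with defect shifted, multiplied by the index factor \rref{indexSp2N} (or its orthogonal analogue), telescopes against the binomial-type sum coming from parabolic induction $Ind^{P_k^B}$. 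I would prove this by induction on $k$ (the parabolic-depth index), using the standard recursion $q$-binomial identities and the product structure of \rref{SymbolDimTwoRowsBigFrac}, isolating the $q^{c[a,b]}$ denominator term and the factor $2^{(a+b-1)/2}$ (resp. $2^{(a+b-2)/2}$) and checking these match the change in defect parity.

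I expect the main obstacle to be bookkeeping the defect and the powers of $2$ correctly across the four cases ($W$ odd/even, symplectic/orthogonal stable range), since the symbol conventions differ (Definitions \ref{OddSymbols} and \ref{EvenSymbols}: odd defect versus defect $\equiv 0,2 \bmod 4$), and the choice of which row receives the new coordinate $N'_\rho$ or $m'_\rho$ is pinned down by the central sign data in a way that must be tracked compatibly with the $\epsilon(s)\cdot disc(B)$ twist. A secondary difficulty is confirming that the ``faked parabolic induction'' dimension formula \rref{FinalDimIrredGenForm} behaves multiplicatively enough under $H \mapsto H^D$ that the factors $\widetilde{u_{H^D}}$ contribute identically on both sides — this should reduce to the fact that $|H|_{q'} = |H^D|_{q'}$ together with the identification of unipotent representations under $G\leftrightarrow G^D$, but it needs to be stated carefully. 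Once the single-factor identity is established, the full statement follows by taking the product over the unchanged factors of $u$ and summing, so the global structure of the proof is: reduce to one symbol factor, prove the $q$-rational-function identity by induction on parabolic depth, and reassemble.
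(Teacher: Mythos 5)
You conflate the statement being proved with its later consequences, which makes your plan circular. Theorem \ref{DimsMatchTheoremEta} is an \emph{aggregate} dimension identity: it asserts that the numerical quantity $\dim(\omega[V\otimes W]^{\mathrm{top}})$ (computable from the recursion in \rref{EtaCorrThmDecomp}) equals $\sum_\rho \dim(\rho)\dim(\phi^V_{W,B}(\rho))$, where $\phi$ is the \emph{explicitly defined} construction of Section \ref{LusztigClaimSect}. Nowhere in this statement, nor in its proof, should the unknown map $\eta$ appear on the right-hand side. Your plan, however, proposes to ``express $\sum_\rho \dim(\rho)\dim(\eta^V_{W,B}(\rho))$ directly and match it summand-by-summand against $\sum_\rho \dim(\rho)\dim(\phi^V_{W,B}(\rho))$'', and then to invoke the matching of semisimple and sign data to reduce to a symbol identity. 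But knowing that $\dim(\eta(\rho))=\dim(\phi(\rho))$ for each fixed $\rho$ — which is what your reduction to ``unipotent parts contribute the same dimension'' amounts to — is precisely the conclusion of Section \ref{InductiveProof}, and that conclusion is proved \emph{using} Theorem \ref{DimsMatchTheoremEta} as an ingredient (together with Proposition \ref{NnRank} and the polynomial-in-$q^N$ argument). You cannot use the output to prove the input.

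The paper's actual proof never fixes $\rho$ and compares $\eta(\rho)$ to $\phi(\rho)$; it avoids the pairing entirely by a global counting argument. Concretely: it first solves the recursion for $\dim(\omega^{\mathrm{top}})$ (Proposition \ref{SympTopPartProp}) and re-expresses the answer as the level-indexed sum \rref{Step2Expression}, where the index $\ell$ is the multiplicity of the eigenvalue $-1$ in the semisimple part $(s)$, \emph{not} the parabolic depth $k$ that you propose to induct on. It then computes $\sum_\rho \dim(\rho)\dim(\phi(\rho))$ by a sequence of ``level-$\ell$ approximations'': at level $0$ one pretends all $\rho$ come from semisimple elements with no $-1$ eigenvalues, so $\dim(\phi(\rho))$ is a fixed multiple of $\dim(\rho)$ and the identity $\sum_\rho \dim(\rho)^2 = |SO(W,B)|$ collapses the sum; then each subsequent level corrects the error introduced at the previous stage, and the $\ell$th correction is shown to equal the $\ell$th term of \rref{Step2Expression}. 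This scheme leans on the group-order identity $\sum_\rho \dim(\rho)^2 = |G|$ applied to nested centralizers, and on a lemma controlling the sum of dimensions of the two symbol extensions; none of this machinery is present in your outline. Your proposal would need to be rebuilt from scratch around the singularity level $\ell$ and the group-order trick, and must drop all appeal to $\eta$ and to the claim that semisimple/sign data are ``already forced'' — that forcing belongs to a later stage of the argument.
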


\subsection{The dimension of the top part of the oscillator representation}
First, we need a more explicit formula for the left hand side of \rref{DimsForEtaIsPhi}:

\begin{proposition}\label{SympTopPartProp}
Consider symplectic and orthogonal spaces $V$ and $(W,B)$ whose dimensions
are in the symplectic stable range.
Writing $\text{dim} (V) = 2N$ and $\text{dim} (W) = 2m+1$,
the dimension of the top part of the restriction of $ \omega [ V \otimes W]$ is
\beg{TopDimFormula}{ \sum_{i=0}^m (-1)^{m-i}  q^{m-i \choose 2}  {m \choose i}_{\hspace{-1mm}q}   \prod_{k= i+1}^m (q^k +1)  q^{(2i + 1)N}}
\end{proposition}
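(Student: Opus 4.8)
The plan is to derive \rref{TopDimFormula} from a recursion in $m$ produced directly by the decomposition in Theorem \ref{TotalHoweITheorem}(1), and then to invert that recursion by a standard $q$-binomial identity.

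\textbf{Setting up the recursion.} The first step is to take dimensions in \rref{EtaCorrThmDecomp}. Since $\dim(V\otimes W)=2N(2m+1)$, the oscillator representation of $Sp(V\otimes W)$ has dimension $q^{N(2m+1)}=q^{(2m+1)N}$, the standard dimension of the Weil representation. In \rref{EtaCorrThmDecomp} we have $h_W=m$; the $k=0$ term is exactly $\omega[V\otimes W]^{\mathrm{top}}$ (as $P_0^B=O(W,B)$ and $\epsilon(\det)$ of $GL_0$ is trivial); and for each $k$, $\dim Ind^{P_k^B}(\rho\otimes\epsilon(\det))=[O(W,B):P_k^B]\cdot\dim(\rho)$, so the $k$-th term has dimension $[O(W,B):P_k^B]\cdot\sum_{\rho\in\widehat{O(W[-k],B[-k])}}\dim(\eta^V(\rho))\dim(\rho)=[O(W,B):P_k^B]\cdot\dim(\omega[V\otimes W[-k]]^{\mathrm{top}})$, the last quantity being well defined because $(Sp(V),O(W[-k],B[-k]))$ is again in the symplectic stable range for our fixed $V$. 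Hence, applying this with $W$ replaced by each $W[-k]$, one obtains, writing $T_j:=\dim(\omega[V\otimes W']^{\mathrm{top}})$ for $W'$ of dimension $2j+1$, the lower-triangular linear system
$$q^{(2m'+1)N}\;=\;\sum_{k=0}^{m'}\bigl[O_{2m'+1}(\F_q):P_k^B\bigr]\cdot T_{m'-k},\qquad 0\le m'\le m.$$

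\textbf{The parabolic index.} Next I would compute $[O(W,B):P_k^B]$, which is the number of totally isotropic $k$-dimensional subspaces of a $(2m+1)$-dimensional nondegenerate quadratic space over $\F_q$; this is independent of the form (all have Witt index $m$) and is the classical count ${m \choose k}_q\prod_{i=m-k+1}^{m}(q^i+1)$. Re-indexing $j=m'-k$ and setting $A_{m'}:=q^{(2m'+1)N}$, the system becomes $A_{m'}=\sum_{j=0}^{m'}f(m',j)\,T_j$ with $f(m',j)={m' \choose j}_q\prod_{i=j+1}^{m'}(q^i+1)$, a lower-triangular array with $1$'s on the diagonal.

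\textbf{Inverting the system.} It then remains to check that the inverse array is $g(m,i)=(-1)^{m-i}\,q^{m-i \choose 2}\,{m \choose i}_q\prod_{k=i+1}^{m}(q^k+1)$, for then $T_m=\sum_{i=0}^m g(m,i)A_i$ is precisely \rref{TopDimFormula}. In the product $\sum_{j=i}^m f(m,j)g(j,i)$ the two chains of $(q^k+1)$-factors telescope to $\prod_{k=i+1}^m(q^k+1)$ (independent of $j$), and ${m \choose j}_q{j \choose i}_q={m \choose i}_q{m-i \choose j-i}_q$, so the sum collapses to ${m \choose i}_q\prod_{k=i+1}^m(q^k+1)\cdot\sum_{l=0}^{m-i}(-1)^l q^{l \choose 2}{m-i \choose l}_q$, and the last sum equals $\prod_{r=0}^{m-i-1}(1-q^r)$ by the $q$-binomial theorem at $x=-1$, hence $\delta_{m,i}$. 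This gives $f g=I$ on every truncation, so $g$ is the two-sided inverse and the formula follows.

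\textbf{Main obstacle.} The substantive point is the bookkeeping in the first step: confirming that the $k$-th summand of \rref{EtaCorrThmDecomp} contributes $[O(W,B):P_k^B]$ copies of the top part of the smaller dual pair, and that the notion of top part is legitimately defined at each stage. The parabolic index is classical and the terminal inversion is a routine application of a well-known $q$-identity. (Two quick sanity checks: $m=0$ forces $T_0=q^N$, and $m=1$ gives $T_1=q^{3N}-(q+1)q^N$, both in agreement with \rref{TopDimFormula}.)
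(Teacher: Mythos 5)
Your proposal is correct and takes essentially the same route as the paper: both start from the recursion obtained by taking dimensions in \rref{EtaCorrThmDecomp} with the classical count of totally isotropic $k$-subspaces of an odd-dimensional quadratic space as the parabolic index, and both terminate in a standard $q$-binomial identity. The only cosmetic difference is that you package the inversion as verifying $fg=I$ for the lower-triangular array, whereas the paper unrolls the recursion into a sum over chains $i=\ell_1<\dots<\ell_j=m$ and then evaluates that sum by the same $q$-multinomial theorem; your version is arguably a bit tidier but the underlying computation is identical.
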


\vspace{3mm}

\begin{proof}
Write, for $j< i$
\beg{CCoefficientDefn}{\begin{array}{c}
\displaystyle C_{i,j} := - {i \choose j}_{\hspace{-1mm}q}   \prod_{k=j+1}^i (q^k +1)= \\
\vspace{-1.5mm}\\
\displaystyle \frac{(q^{2i} -1) (q^{2(i-1)}-1) \dots (q^{2(j+1)}-1)}{(q^{i-j} -1) (q^{i-j-1} -1) \dots (q-1)}\end{array}}
Let $X_m$ denote the dimension of the top part $ \omega [V \otimes W]^{\text{ top}}$,
where $W$ is a $2m+1$-dimensional $\F_q$-space.
Taking the dimension of \rref{EtaCorrThmDecomp} then gives the recursive equation
\beg{IterativeFormulaTopDim}{X_m = q^{(2m+1)N} + \sum_{i=0}^{m-1} C_{m, i}  X_i .}

Our goal to prove \rref{TopDimFormula} is to re-express the right-hand side of
\rref{IterativeFormulaTopDim} in terms of a sum of $q^{(2i+1)N}$ for $0 \leq i \leq m$ some lower
coefficient.
Now, iteratively applying \rref{IterativeFormulaTopDim}, we find that
$$X_m = \sum_{i=0}^m \left( \sum_{i = \ell_1 < \dots < \ell_j = m} \hspace{1mm} \prod_{k=1}^{j-1} C_{\ell_{k+1}, \ell_k} \right) q^{(2i+1)N}.$$

It suffices to prove
\beg{TopDimCsLastStep}{ \sum_{i = \ell_1 < \dots < \ell_j = m} \hspace{1mm} \prod_{k=1}^{j-1} C_{\ell_{k+1}, \ell_k}  = C_{m, i} \; q^{i \choose 2}.}
Using \rref{CCoefficientDefn}, each term $\prod_{k=1}^{j-1} C_{\ell_{k+1}, \ell_k}$
where $i = \ell_1 < \dots < \ell_j = m$,
factors as
$$\frac{(q^{2m} -1) (q^{2(m-1)} -1) \dots ( q^{2(m-i+1)} -1)}{\displaystyle \prod_{k=1}^{j-1}\; \prod_{r= 1}^{\ell_{k+1} - \ell_k} (q^r -1 )},
$$
which can be simplified as
$$
C_{m, i}  \frac{(q^{m-i} -1) (q^{m-i-1} -1) \dots (q-1)}{\displaystyle \prod_{k=1}^{j-1}\; \prod_{r= 1}^{\ell_{k+1} - \ell_k} (q^r -1)},
$$
reducing the claim to
\beg{QMultinomThmClaim}{q^{m-i \choose 2} = \sum_{i = \ell_1< \dots < \ell_j = m} \frac{(q^{m-i} -1) (q^{m-i-1} -1) \dots (q-1)}{\displaystyle \prod_{k=1}^{j-1}\; \prod_{r= 1}^{\ell_{k+1} - \ell_k} (q^r -1)}.}
The right-hand side of \rref{QMultinomThmClaim} can also be written as
$$
\sum_{0= \ell_1' < \dots < \ell_j' = m-i} {\ell_j' \choose \ell_{j-1}'}_{\hspace{-1mm}q} { \ell_{j-1}' \choose \ell_{j-2}' }_{\hspace{-1mm}q} \dots {
\ell_2' \choose \ell_1'}_{\hspace{-1mm}q},
$$
by substituting $\ell_j' = \ell_j -i$, so \rref{QMultinomThmClaim} follows from a $q$-version of the multinomial theorem.
\end{proof}

We re-write \rref{TopDimFormula} again as follows, to separate
it into terms which correspond to levels of singularity of semisimple elements
(more specifically, the multiplicity of eigenvalue $-1$) in the classification
of irreducible representations of $\text{SO}_{2m+1} (\F_q)$:
\begin{proposition}\label{CombinEleOdd}
The top dimension of $ \omega [V\otimes W]^{top}$ is 
\beg{Step2Expression}{
\sum_{\ell = 0}^m (-1)^\ell \; q^{N+ (m-\ell) (m-\ell-1) + \ell^2} \; {m \choose \ell}_{\hspace{-1mm}q^2}
 \prod_{j=0}^{m-\ell-1} (q^{2(N-i)}-1).
}
\end{proposition}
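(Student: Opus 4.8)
The plan is to start from the closed-form expression \rref{TopDimFormula} established in Proposition \ref{SympTopPartProp}, namely
$$\sum_{i=0}^m (-1)^{m-i} q^{\binom{m-i}{2}} \binom{m}{i}_q \prod_{k=i+1}^m (q^k+1) \cdot q^{(2i+1)N},$$
and rewrite it by the substitution $\ell = m-i$, so the running index becomes the intended multiplicity $\ell$ of the eigenvalue $-1$. After this substitution the exponent $q^{(2i+1)N}$ becomes $q^{(2(m-\ell)+1)N}$; the target expression \rref{Step2Expression} has a single power $q^N$ together with a product $\prod_{j=0}^{m-\ell-1}(q^{2(N-i)}-1)$ (here the index is presumably meant to be $j$, i.e. $\prod_{j=0}^{m-\ell-1}(q^{2(N-j)}-1)$). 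So the heart of the argument is an identity between the $q$-binomial/product data at level $i=m-\ell$ in \rref{TopDimFormula} and the $q^2$-binomial times $q$-power-times-product data at level $\ell$ in \rref{Step2Expression}.

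The key steps, in order: (1) Perform the reindexing $i \mapsto m-\ell$ in \rref{TopDimFormula}, collecting the sign $(-1)^{m-i} = (-1)^\ell$, which already matches. (2) Match the $q$-powers: on the left one has $q^{\binom{\ell}{2}} \cdot q^{(2(m-\ell)+1)N}$; one must check this equals $q^{N + (m-\ell)(m-\ell-1)+\ell^2}$ times whatever power of $q$ is absorbed into rewriting $\binom{m}{i}_q$ and $\prod(q^k+1)$ in terms of $\binom{m}{\ell}_{q^2}$ and $\prod(q^{2(N-j)}-1)$. The cleanest way is to expand everything into products of cyclotomic-type factors $q^a \pm 1$ and compare. (3) The substantive combinatorial identity is the conversion
$$\binom{m}{m-\ell}_q \prod_{k=m-\ell+1}^m (q^k+1) \;=\; q^{?}\,\binom{m}{\ell}_{q^2}\,\frac{\prod_{j=0}^{m-\ell-1}(q^{2(N-j)}-1)}{\prod_{i=0}^{\ell-1}(q^{2(N-i)}-1)}\cdot(\text{correction})$$
--- i.e. one must recognize that $\binom{m}{\ell}_q \binom{m}{\ell}$-type products of $(q^j-1)$ together with the product of $(q^k+1)$ telescope into a product of $(q^{2j}-1)$'s, since $(q^j-1)(q^j+1)=q^{2j}-1$. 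Concretely, $\prod_{k=m-\ell+1}^m(q^k+1)\cdot\prod_{k=m-\ell+1}^m(q^k-1)=\prod_{k=m-\ell+1}^m(q^{2k}-1)$, and the remaining $(q^k-1)$ factors must be supplied from inside $\binom{m}{m-\ell}_q$; the leftover factors reorganize into $\binom{m}{\ell}_{q^2}$ up to an explicit power of $q$, which one then reconciles with the $q^{(m-\ell)(m-\ell-1)+\ell^2-\binom{\ell}{2}}$ discrepancy and with the appearance of $q^{2(N-j)}-1$ rather than $q^{2j}-1$ (this last discrepancy is only a relabeling, since $N$ enters \rref{TopDimFormula} only through the exponent $q^{(2i+1)N}$, so the factors $q^{2(N-j)}-1$ in \rref{Step2Expression} must actually come from re-expressing $q^{(2(m-\ell)+1)N}$ --- more precisely, part of the large power of $q$ in \rref{TopDimFormula} is being traded for a product $\prod(q^{2(N-j)}-1) = \prod q^{2(N-j)}(1-q^{-2(N-j)})$, and one checks the leading-term matches the power count while the lower-order terms match the $q$-binomial expansion).

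I expect the main obstacle to be precisely bookkeeping step (3): disentangling which $(q^j-1)$ factors from the $q$-binomial $\binom{m}{m-\ell}_q$ pair up with the $(q^k+1)$ factors to form $q^2$-factors and which remain, and then verifying the residual power of $q$ exactly equals $(m-\ell)(m-\ell-1)+\ell^2 - \binom{\ell}{2} + (\text{the power lost from }q^{(2(m-\ell)+1)N}\text{ in forming }\prod(q^{2(N-j)}-1))$. A useful sanity check along the way is to specialize $q\to 1$ (where $\binom{m}{\ell}_{q^2}\to\binom{m}{\ell}$ and the products of $(q^a\pm1)$ have predictable orders of vanishing) and to test $\ell=0$ and $\ell=m$ separately, since those extreme cases pin down the overall normalizing power of $q$. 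Once the factor-by-factor identification is done, the proposition follows by term-by-term comparison of the two sums, with no genuine summation identity needed --- unlike Proposition \ref{SympTopPartProp}, this is a purely term-wise rearrangement.
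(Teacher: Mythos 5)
Your proposal has a genuine gap: the conclusion that ``the proposition follows by term-by-term comparison of the two sums, with no genuine summation identity needed'' is false, and it is not a bookkeeping issue. After the reindexing $\ell = m-i$, the $\ell$-th summand of \rref{TopDimFormula} is a single monomial $q^{(2(m-\ell)+1)N}$ with coefficient independent of $N$, whereas the $\ell$-th summand of \rref{Step2Expression} is a polynomial of $q^N$-degree $2(m-\ell)+1$ with many lower-order terms coming from $\prod_{j=0}^{m-\ell-1}(q^{2(N-j)}-1)$. These cannot match individually. Already at $m=1$: the $\ell=0$ term of \rref{Step2Expression} is $q^N(q^{2N}-1) = q^{3N}-q^N$, while the $\ell=0$ term of \rref{TopDimFormula} is $q^{3N}$; the $-q^N$ piece must be cancelled against the $\ell=1$ term. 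So the equality only holds after cross-$\ell$ cancellation.

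The actual argument (and what the paper does) proceeds by first carrying out your initial algebraic reorganization --- rewrite $\binom{m}{m-\ell}_q \prod_{k=m-\ell+1}^m(q^k+1)$ as $\binom{m}{\ell}_{q^2}\prod_{j=1}^\ell(q^j+1)$, and collect the prefactor of \rref{Step2Expression} into $q^{N+\ell^2}\binom{m}{\ell}_{q^2}\prod_{i=0}^{m-\ell-1}(q^{2N}-q^{2i})$ --- but then one must expand each product $\prod_{i=0}^{m-\ell-1}(q^{2N}-q^{2i})$ into a sum over powers $q^{2Nj}$ and compare the coefficient of each $q^{(2(m-\ell)+1)N}$ across the two sides. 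On the \rref{Step2Expression} side this coefficient collects contributions from all $k \leq \ell$, and matching it to the single term from \rref{Substituim-elltopcomb} reduces to a genuine $q$-binomial identity (\rref{LastQBinnom}), which after factoring out $\binom{m}{m-\ell}_{q^2}$ becomes a $q$-multinomial theorem. The ``lower-order terms match the $q$-binomial expansion'' phrase you use in step (3) is in fact the heart of the proof; it is a summation identity, not a factor-by-factor rearrangement, and your proposal as written does not carry it out.
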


\begin{proof}
Substituting $i = m-\ell$, \rref{TopDimFormula}
can be re-written as
\beg{Substituim-elltopcomb}{\sum_{\ell= 0}^m (-1)^\ell q^{{\ell \choose 2}} 
{m \choose \ell}_{\hspace{-1mm}q^2} (\prod_{j=1}^{\ell} (q^j +1) )q^{(2(m-\ell)+1)+N}.}

Now in \rref{Step2Expression}, using
$$(m-\ell-1) (m-\ell) = \sum_{j=0}^{m-\ell-1} 2k,$$
we have
$$q^{(m-\ell-1)(m-\ell)} \prod_{j=0}^{m-\ell -1} (q^{2(N-i)} -1)  = \prod_{j=0}^{m-\ell-1}
(q^{2N} - q^{2i}).$$
Hence, \rref{Step2Expression} reduces to
$$\sum_{\ell= 0}^m (-1)^\ell q^{N+\ell^2} {m \choose \ell}_{q^2}\prod_{i=0}^{m-\ell -1} 
(q^{2N} - q^{2i}).$$
Finally, at each $\ell$,
$$
\begin{array}{c}
\displaystyle \prod_{i= 0}^{m-\ell-1} (q^{2N} -q^{2i}) = \sum_{j=0}^{m-\ell} q^{2Nj} \sum_{1\leq i_1< \dots < i_{m-\ell-j} \leq m-\ell-1} q^{2(i_1 + \dots +i_{m-\ell-j})}=\\
\\
\displaystyle \sum_{j=0}^{m-\ell} q^{2Nj} {m-\ell \choose j}_{\hspace{-1mm}q^2}.
\end{array}$$
Therefore, the coefficient of $q^{(2(m-\ell)+1)N}$ in \rref{Substituim-elltopcomb} for
each $\ell$ is
$$\sum_{k= 0}^{\ell} (-1)^k q^{k^2} {m \choose m-k}_{\hspace{-1mm}q^2} {m-k \choose m-\ell}_{\hspace{-1mm}q^2}$$
(identifying ${m \choose k}_{\hspace{-1mm}q^2}$ with ${m \choose m-k}_{\hspace{-1mm}q^2}$).
Hence, the claim reduces to verifying that
\beg{LastQBinnom}{
\begin{array}{c}
\displaystyle \sum_{k=0}^\ell (-1)^k q^{k^2} {m \choose m-k}_{\hspace{-1mm}q^2} {m-k \choose m-\ell}_{\hspace{-1mm}q^2}=\\
\\
\displaystyle q^{\ell \choose 2} {m \choose m-\ell}_{\hspace{-1mm}q^2} \prod_{j=1}^\ell (q^j +1)
\end{array}}
Further, we have
$$
{m \choose m-k}_{q^2} {m-k \choose m-\ell}_{q^2} = {m \choose m-\ell}_{q^2} {\ell \choose \ell-k}_{q^2}= {m \choose m-\ell}_{q^2} {\ell \choose k}_{q^2},
$$
reducing \rref{LastQBinnom} again to a $q$-multinomial theorem.
\end{proof}

The purpose of re-writing the dimension of the top part of $ \omega [ V\otimes W]$
as \rref{Step2Expression} is because, for each $\ell$, the prime to $q$
part of the $\ell$th term of \rref{Step2Expression} is
\beg{WhyStep2Express}{\begin{array}{c}
\displaystyle {m \choose \ell}_{\hspace{-1mm}q^2} \; \prod_{i=0}^{m-\ell-1} (q^{2(N-i)} -1) =\\
\\
\displaystyle \frac{|\text{Sp}_{2m} (\F_q)|_{q'} \cdot |\text{Sp}_{2N} (\F_q)|_{q'}}{|\text{Sp}_{2(m-\ell)} (\F_q)|_{q'} \cdot |\text{Sp}_{2\ell} (\F_q)|_{q'} \cdot |\text{Sp}_{2(N-m-\ell)}(\F_q) |_{q'}.}
\end{array}
}

We use Proposition \ref{CombinEleOdd}
to conclude \rref{DimsForEtaIsPhi} by approximating the right hand recursively by considering
terms $\text{dim} (\pi) \text{dim} (\phi_{W,B} (\pi))$ separately for $\pi \in \widehat{\text{O}( W,B)}$ arising from a
conjugacy class of a semisimple element of the dual group $\text{Sp}_{2m} (\F_q)$,
which is singular of type $(m-\ell, \ell)$ (i.e. has $-1$ as an eigenvalue with multiplicity $2\ell$), using
the elementary fact that the sum of the squares of the dimensions of all irreducible
representations of a group $G$ recover its group order.
This gives that the ``level $\ell$" approximation of the right hand side of \rref{DimsForEtaIsPhi}
(which counts correctly the terms from $\pi$ arising from conjugacy classes of semisimple
elements $\text{Sp}_{2m} (\F_q)$ with eigenvalue $-1$ of multiplicity less than or equal to $2\ell$,
and miss-counts the terms from $\pi$ arising from conjugacy classes with eigenvalue $-1$ of multiplicity
more than $2\ell$) is the sum of the first $\ell$ terms of \rref{Step2Expression}.

More formally:
\begin{proof}[Proof of Theorem \ref{DimsMatchTheoremEta}]
Suppose $W$ is an $\F_q$-vector space of dimension $2m+1$ with symmetric bilinear form $B$.
First, consider irreducible representations $\pi \in \widehat{\text{O}(W,B)}$
whose restrictions $\text{Res}_{\text{SO}(W,B)} (\pi)$ to $\text{SO}(W,B)$ correspond
to a conjugacy class of a semisimple element $s \in \text{Sp}_{2m} (\F_q)$
where $-1$ is not an eigenvalue. Call such irreducible representations
of $\text{SO}(W,B)$ the {\em ``level $0$" representations
of $\text{SO}(W,B)$}. For each such $\pi' \in \widehat{\text{SO}(W,B)}$, say with classification
given by the Jordan decomposition $[(s), u]$, the identity component of the centralizer of $s$ first must be of the form
$$Z_{\text{Sp}_{2m } (\F_q)} (s)^\circ = \prod_{i=1}^r U_{j_i}^+ (\F_q) \times \prod_{i=1}^t U_{k_i}^-
(\F_q) \times \text{Sp}_{2p} (\F_q)$$
for $\sum_{i=1}^r j_i + \sum_{i=1}^t k_i +p = m$ (where $s$ has $1$ as an eigenvalue
with multiplicity $2p$), with the unipotent representation $u$ then consisting of the data of unipotent
representations of $U_{j_i}^+ (\F_q)$, $U_{k_i}^- (\F_q)$, and a symbol of rank $p$ and type $C$.
Then,
$$\begin{array}{c}
Z_{\text{SO}_{2N+1} (\F_q)} (s \oplus \sigma_{N-m}^\pm )^\circ
= \\[1ex]
\displaystyle \prod_{i=1}^r U_{j_i}^+ (\F_q) \times \prod_{i=1}^t U_{k_i}^- (\F_q) \times \text{SO}_{2p+1} (\F_q)
 \times \text{SO}_{2(N-m)}^\pm (\F_q)
\end{array},$$
wich has order
$$|Z_{\text{SO}_{2N+1} (\F_q)} (s \oplus \sigma_{N-m}^\pm )^\circ | = | Z_{\text{Sp}_{2m} (\F_q)}(s)^\circ | \cdot
|\text{SO}_{2(N-m)}^{\pm disc(B)} (\F_q)|.$$
For both choices, the dimension of $\phi_{W,B} (u)$ is equal to the dimension of $u$.
Hence, for every $\pi' \in \widehat{\text{SO}(W,B)}$,
the sum of dimensions $\text{dim} (\phi_{W,B} (\pi \otimes 1)) + \text{dim}(\phi_{W,B} (\pi \otimes -1))$
is equal to the dimension of $\pi$, multiplied by
$$
\begin{array}{c}
\displaystyle \frac{|\text{Sp}_{2N} (\F_q)|_{q'}}{2|\text{SO}_{2(N-m)}^+ (\F_q) \times \text{Sp}_{2m} (\F_q)|_{q'}} 
+\frac{|\text{Sp}_{2N} (\F_q)|_{q'}}{2|\text{SO}_{2(N-m)}^- (\F_q) \times \text{Sp}_{2m }(\F_q)|_{q'}} =\\
\\
\displaystyle \frac{1}{|\text{Sp}_{2m }(\F_q)|_{q'}} q^{N-m} \prod_{i=0}^{m-1} (q^{2(N-i)} -1) 
\end{array}$$
Hence, since the dimensions of the $\text{O}(W,B)$ representations $\pi \otimes 1$ and $\pi \otimes -1$
are equal to $\text{dim} (\pi)$, the sum of the two terms
\beg{Level0CombinatorialFinal}{\begin{array}{c}
\displaystyle \text{dim} (\pi' \otimes 1 ) \text{dim} (\phi_{W,B} (\pi' \otimes 1)) + \text{dim} (\pi '\otimes -1) \text{dim} (\phi_{W,B} (
\pi' \otimes -1)) = \\
\\
\displaystyle \frac{\text{dim} (\pi')^2}{|\text{Sp}_{2m }(\F_q)|_{q'}} q^{N-m} \prod_{i=0}^{m-1} (q^{2(N-i)} -1)
\end{array}}
If all representations $\pi' \in \widehat{\text{SO}(W,B)}$ satisfied \rref{Level0CombinatorialFinal},
then the right hand side of \rref{DimsForEtaIsPhi} would equal
\beg{Level0ApproximationFinalCombArg}{\begin{array}{c}
\displaystyle\frac{|\text{SO} (W,B) |}{|\text{Sp}_{2m} (\F_q)|_{q'}} q^{N-m} \prod_{i=0}^{m-1} (q^{2(N-i)} -1)=\\
\\
\displaystyle q^{N-m (m-1)} \prod_{i=0}^{m-1} (q^{2(N-i)} -1)
\end{array}}
(recalling that $|\text{SO} (W,B) | = |\text{Sp}_{2m} (\F_q)|$, with $q$-part equal to $q^{m^2}$).

We call \rref{Level0ApproximationFinalCombArg} the {\em level 0 approximation}
of \rref{DimsForEtaIsPhi}. Note that it is precisely equal to the $0$th term of \rref{DimsForEtaIsPhi}.
The remainder of the argument consists of considering
the ranges of irreducible representations arising from semisimple elements
one $\ell$ at a time (from $\ell = 1$ to $\ell = m$).
We must compute that adding the $\ell$th term of \rref{Step2Expression}
cancels the ``level $\ell$-error,"
arising from miscounting the terms \rref{DimsForEtaIsPhi} 
for $\pi'$ arising from $(s)$ with exactly $2\ell$
in the level $(\ell -1)$-approximation of \rref{DimsForEtaIsPhi} (though it may create
more error at higher levels), so that we can take the sum
$$
\sum_{i=0}^{\ell} (-1)^i q^{N+ (m- i)(m- i -1) + i ^2} {m \choose \ell}_{q^2} \prod_{j = 0}^{m- i -1}
(q^{2(N-j)} -1) 
$$
to be the {\em level $\ell$ approximation} of \rref{DimsForEtaIsPhi}.

\vspace{3mm}

We may therefore prove Theorem \ref{DimsMatchTheoremEta}
inductively by verifying that for every $\ell$, the level $\ell$-approximation
is equal to the sum of the $0$th to $\ell$th terms of \rref{Step2Expression},
up to an error of terms with $N$-degree
less than or equal to $2(m-\ell ) +1$.

\begin{lemma}
Fix a symbol $\lambda_1 < \dots < \lambda_a \choose \mu_1 < \dots < \mu_b$
of rank $\ell$, type $C$, and write $c= (a+b-1)/2$.
Choosing the sign of $\text{SO}_{2N}^\pm (\F_q)$ in the denominator according to matching
the defect of the written symbols with the appropriate groups (depending on $a-b$ mod $4$),
then the sum
\beg{SumLemma}{\begin{array}{c}
\displaystyle \frac{|\text{Sp}_{2N} (\F_q)|_{q'}}{|\text{SO}_{2N}^\pm (\F_q)|_{q'}} \text{dim} (
{\lambda_1< \dots < \lambda_a \choose \mu_1< \dots < \mu_b < N-\ell +c})+\\
\\
\displaystyle \frac{|\text{Sp}_{2N} (\F_q)|_{q'}}{|\text{SO}_{2N}^\mp (\F_q)|_{q'}} \text{dim} (
{\lambda_1< \dots < \lambda_a<N-\ell+c \choose \mu_1< \dots < \mu_b}) 
\end{array}
}
is the product
\beg{MainCancellableTerm}{
\begin{array}{c}
\displaystyle (\frac{|\text{Sp}_{2N} (\F_q)|_{q'}}{|\text{SO}_{2\ell+1} (\F_q) \times 
\text{SO}_{2(N-\ell)}^+ (\F_q) |_{q'}} + \frac{|\text{Sp}_{2N} (\F_q)|_{q'} }{
|\text{SO}_{2\ell+1} (\F_q) \times \text{SO}_{2(N-\ell)}^- (\F_q) })\cdot\\
\\
\displaystyle \text{dim} ({\lambda_1< \dots < \lambda_a \choose
\mu_1 < \dots < \mu_b}),
\end{array}
}
up to an error term equal to \rref{MainCancellableTerm}, multiplied again
by $\text{dim} ({\lambda_1< \dots < \lambda_a \choose \mu_1< \dots < \mu_b})$
and the factor
$$q^{N-(m-\ell)(m-\ell -1)} \frac{|\text{Sp}_{2N} (\F_q)|_{q'}}{|\text{Sp}_{2(N-m+\ell)} (\F_q)|_{q'}}\frac{|\text{SO}_{2m+1} (\F_q)|_{q'}}{|\text{SO}_{2\ell+1} (\F_q) \times \text{SO}_{2(m-\ell)+1} (\F_q)|_{q'}|}
$$
\end{lemma}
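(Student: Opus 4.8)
The plan is to expand both sides of \rref{SumLemma} with the explicit unipotent dimension formula \rref{SymbolDimTwoRowsBigFrac}--\rref{DTypeFactorSymbol}, collapse the group-order prefactors, reduce the left side to a single rational function attached to the original symbol $S={\lambda_1<\dots<\lambda_a\choose\mu_1<\dots<\mu_b}$, and then peel \rref{MainCancellableTerm} off it. Set $M=N-\ell+c$ for the entry being appended. Appending $M$ turns the odd defect $a-b$ of $S$ into $a-b-1$ or $a-b+1$, so exactly one of the two concatenated symbols has defect $\equiv0\pmod4$ (type $D$, an $SO^+_{2N}(\F_q)$-representation) and the other has defect $\equiv2\pmod4$ (type ${}^2D$, an $SO^-_{2N}(\F_q)$-representation), matched to the two summands of \rref{SumLemma} exactly as in the statement; the rank conditions of Definitions \ref{OddSymbols} and \ref{EvenSymbols}, together with $c=(a+b-1)/2$, force both concatenated symbols to have rank $N$ (here one uses, as in the stable ranges, that $M$ exceeds every $\lambda_i,\mu_j$).

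The first reduction rests on the cancellation $|Sp_{2N}(\F_q)|_{q'}/|SO^{\pm}_{2N}(\F_q)|_{q'}=(q^{2N}-1)/(q^N\mp1)=q^N\pm1$ (as in \rref{QNpm1IndexFact}): the signs in the group orders annihilate the signs $q^N\pm1$, so that $\bigl(|Sp_{2N}(\F_q)|_{q'}/|SO^{\pm}_{2N}(\F_q)|_{q'}\bigr)\,|SO^{\pm}_{2N}(\F_q)|_{q'}=|Sp_{2N}(\F_q)|_{q'}$. Writing $D(\,\cdot\,)$ for the factor \rref{SymbolDimTwoRowsBigFrac}, and noting that $(a+b-1)/2=c$ also equals $(a'+b'-2)/2$ for a symbol with $a'+b'=a+b+1$ parts, each term of \rref{SumLemma} becomes $2^{-c}\,|Sp_{2N}(\F_q)|_{q'}\,D(S_{\mathrm{bot}})$, resp.\ $2^{-c}\,|Sp_{2N}(\F_q)|_{q'}\,D(S_{\mathrm{top}})$; since $D(S)=2^{c}\dim(S)/|SO_{2\ell+1}(\F_q)|_{q'}$, the whole left side of \rref{SumLemma} equals $\frac{|Sp_{2N}(\F_q)|_{q'}}{|SO_{2\ell+1}(\F_q)|_{q'}}\dim(S)\cdot\frac{D(S_{\mathrm{bot}})+D(S_{\mathrm{top}})}{D(S)}$.

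Next I compute that ratio: appending $M$ to the $\mu$-row multiplies the numerator of \rref{SymbolDimTwoRowsBigFrac} by $\prod_i(q^M-q^{\mu_i})\prod_j(q^M+q^{\lambda_j})$ (and by the $\lambda\leftrightarrow\mu$ version for the $\lambda$-row), multiplies the denominator by $\prod_{j=1}^{M}(q^{2j}-1)$, and changes $q^{c[a,b]}$ to $q^{c[a,b]+c^{2}}$ (one checks $c[a+1,b]-c[a,b]=c[a,b+1]-c[a,b]=c^{2}$ directly from $c[a,b]=\sum_i{a+b-2i\choose2}$). Thus, with $P(x)=\prod_i(x-q^{\lambda_i})$ and $Q(x)=\prod_i(x-q^{\mu_i})$, the ratio is $q^{-c^{2}}R(q^{M})/\prod_{j=1}^{M}(q^{2j}-1)$ where $R(x)=(-1)^{a}P(-x)Q(x)+(-1)^{b}P(x)Q(-x)$; expanding $P,Q$ in elementary symmetric functions of the $q^{\lambda_i},q^{\mu_j}$ shows $R$ is odd of degree $a+b=2c+1$ with leading coefficient $2$, and more precisely $R(x)=2\sum_{n\ \mathrm{even}}\beta_n x^{a+b-n}$ with $\beta_n=[x^n]\prod_i(1-q^{\lambda_i}x)\prod_j(1+q^{\mu_j}x)$, so $\beta_0=1$. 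Since $M=N-\ell+c\ge N-\ell$, split $\prod_{j=1}^{M}(q^{2j}-1)=\prod_{j=1}^{N-\ell}(q^{2j}-1)\cdot\prod_{j=N-\ell+1}^{N-\ell+c}(q^{2j}-1)$; a comparison of $q$-degrees (using the exponent identity above) shows the $\beta_0$-part of $q^{-c^{2}}R(q^{M})$ is exactly $2q^{N-\ell}\prod_{j=N-\ell+1}^{N-\ell+c}(q^{2j}-1)$, whose contribution after dividing and multiplying back is, via $\frac1{q^{N-\ell}-1}+\frac1{q^{N-\ell}+1}=\frac{2q^{N-\ell}}{q^{2(N-\ell)}-1}$, precisely \rref{MainCancellableTerm}. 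The leftover is $\frac{|Sp_{2N}(\F_q)|_{q'}}{|SO_{2\ell+1}(\F_q)|_{q'}}\dim(S)\cdot\frac{E(q)}{\prod_{j=1}^{M}(q^{2j}-1)}$, where $E(q)$ collects the $\beta_2,\beta_4,\dots$ contributions, and this is to be matched with the asserted error term by pushing the $\beta_n$'s through the same group-order rewrites "one level down" and invoking a $q$-multinomial identity of the kind used in Propositions \ref{SympTopPartProp} and \ref{CombinEleOdd}.

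The main obstacle is precisely this final identification. Because $R$ is a \emph{sum} rather than a product of the two new-entry contributions, $E(q)$ does not factor, so showing that after division by $\prod_{j=1}^{M}(q^{2j}-1)$ it recombines into \rref{MainCancellableTerm} times the displayed factor $q^{N-(m-\ell)(m-\ell-1)}\,\frac{|Sp_{2N}(\F_q)|_{q'}}{|Sp_{2(N-m+\ell)}(\F_q)|_{q'}}\,\frac{|SO_{2m+1}(\F_q)|_{q'}}{|SO_{2\ell+1}(\F_q)\times SO_{2(m-\ell)+1}(\F_q)|_{q'}}$ will require careful tracking of the $c[a,b]$-exponents and the same $q$-binomial bookkeeping used earlier in this section; everything preceding it is routine substitution together with the cancellation $|SO^{\pm}_{2N}(\F_q)|_{q'}\,(q^{N}\pm1)=|Sp_{2N}(\F_q)|_{q'}$.
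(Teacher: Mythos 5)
Your strategy is the paper's: expand the symbol dimension formula \rref{SymbolDimTwoRowsBigFrac}--\rref{DTypeFactorSymbol}, cancel group orders, reduce the left side of \rref{SumLemma} to a multiple of $\bigl(D(S_{\mathrm{bot}})+D(S_{\mathrm{top}})\bigr)/D(S)$, and observe that $R(x)=(-1)^aP(-x)Q(x)+(-1)^bP(x)Q(-x)$ is odd of degree $2c+1$ with leading coefficient $2$. Your setup (the cancellation $|Sp_{2N}(\F_q)|_{q'}/|SO^{\pm}_{2N}(\F_q)|_{q'}=q^N\pm1$, the identity $c[a',b']-c[a,b]=c^2$, the formula for $\beta_n$ as coefficients of $\prod_i(1-q^{\lambda_i}x)\prod_j(1+q^{\mu_j}x)$) is correct and matches what the paper does.

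The gap is in the main-term identification. You assert that ``the $\beta_0$-part of $q^{-c^2}R(q^M)$ is exactly $2q^{N-\ell}\prod_{j=N-\ell+1}^{N-\ell+c}(q^{2j}-1)$.'' By your own expansion, the $\beta_0$-part of $q^{-c^2}R(q^M)$ is the single monomial $2q^{M(2c+1)-c^2}$; the displayed product is a polynomial with lower-order terms. They share the leading $q$-degree $(N-\ell)(2c+1)+c(c+1)$, which is all that a ``comparison of $q$-degrees'' shows, but they are not equal. Consequently your ``$\beta_0$ contribution'' is not \rref{MainCancellableTerm}; it only agrees with it to leading order, and the discrepancy spills into $E(q)$, so your error term is not the one the lemma claims. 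The paper avoids this by factoring $2q^M=q^c\bigl((q^{N-\ell}-1)+(q^{N-\ell}+1)\bigr)$ out of the \emph{entire} polynomial $R(q^M)=2q^M\sum_{k=0}^c q^{2kM}\beta_{2(c-k)}$, not just its leading term. After combining $q^{c^2-c}\prod_{j=N-\ell+1}^{M}(q^{2j}-1)=\prod_{i=0}^{c-1}(q^{2M}-q^{2i})$, the left side of \rref{SumLemma} becomes \rref{MainCancellableTerm} times the \emph{exact} factor
\begin{equation*}
\frac{\displaystyle\sum_{k=0}^{c}q^{2kM}\,\beta_{2(c-k)}}{\displaystyle\prod_{i=0}^{c-1}\bigl(q^{2M}-q^{2i}\bigr)},
\end{equation*}
whose numerator and denominator both have leading term $q^{2cM}$; expanding the denominator in powers of $q^{2M}$ (producing $q^2$-binomial coefficients) one sees this fraction is $1$ plus an error whose numerator at level $k$ is $\beta_{2(c-k)}-\binom{c}{k}_{q^2}$. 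To repair your argument, do not split $\prod_{j=1}^{M}(q^{2j}-1)$ at $j=N-\ell$ and then try to match $\beta_0$ alone; keep $\prod_{i=0}^{c-1}(q^{2M}-q^{2i})$ together so that the main term separates off exactly and the $\binom{c}{k}_{q^2}$ corrections appear automatically. You are right that the final identification of this error with the lemma's displayed factor requires further $q$-binomial bookkeeping; the paper's own proof stops at the error expression above without performing it explicitly either.
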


\begin{proof}
Suppose, without loss of generality, $a-b$ is $1$ mod $4$.
Recalling how to compute the dimensions of symbols, we have that
$$\begin{array}{c}
\displaystyle \frac{\displaystyle \text{dim}(
{\lambda_1< \dots < \lambda_a \choose \mu_1< \dots < \mu_b<N-\ell+c})
}{
q^{ {a+b-1 \choose 2} + {a+b-3 \choose 2} + {a+b-5 \choose 2} + \dots }\cdot |\text{SO}_{2N}^+ (\F_q)|_{q'}} = \\
\\
\displaystyle
\frac{\displaystyle \text{dim} ({\lambda_1< \dots < \lambda_a
\choose \mu_1< \dots < \mu_b})  \prod_{i=1}^a (q^{N-\ell + c} + q^{\lambda_i}) \prod_{i=1}^b (q^{N-\ell +c}- q^{\mu_i}) }{
\displaystyle q^{  {a+b-2 \choose 2} + {a+b-4 \choose 2} + {a+b-6 \choose 2}+  \dots} \cdot |\text{SO}_{2\ell+1} (\F_q)|_{q'} \prod_{i=1}^{N-\ell+c} (q^{2i}-1)}
\end{array}
$$
and, similarly,
$$\begin{array}{c}
\displaystyle
\frac{\displaystyle \text{dim}(
{\lambda_1< \dots < \lambda_a < N-\ell +c \choose \mu_1< \dots < \mu_b })
}{
q^{ {a+b-1 \choose 2} + {a+b-3 \choose 2} + {a+b-5 \choose 2} + \dots }\cdot |\text{SO}_{2N}^- (\F_q)|_{q'}} = \\
\\
\displaystyle
\frac{\displaystyle \text{dim} ({\lambda_1< \dots < \lambda_a
\choose \mu_1< \dots < \mu_b})  \prod_{i=1}^a (q^{N-\ell + c} - q^{\lambda_i}) \prod_{i=1}^b (q^{N-\ell +c} + q^{\mu_i}) }{
\displaystyle 
q^{ {a+b-2 \choose 2} + {a+b-4 \choose 2} + {a+b-6 \choose 2} + \dots }\cdot|\text{SO}_{2\ell +1} (\F_q)|_{q'} \prod_{i=1}^{N-\ell+c} (q^{2i}-1)}.
\end{array}
$$
Summing the terms \rref{SumLemma} then gives a product of the coefficient
$$\frac{|\text{Sp}_{2N} (\F_q)|_{q'}}{\displaystyle |\text{SO}_{2\ell+1} (\F_q)|_{q'} \prod_{i=1}^{N-\ell+c} (q^{2i} -1)} \text{dim} ({\lambda_1< \dots<\lambda_a
\choose \mu_1< \dots < \mu_b}),$$
with the factor
$$\frac{q^{  {a+b-2 \choose 2} + {a+b-4 \choose 2} + {a+b-6 \choose 2}+  \dots }}{
q^{ {a+b-1 \choose 2} + {a+b-3 \choose 2} + {a+b-5 \choose 2} + \dots }} =
\frac{1}{q^{ \sum_{i=0}^{c-1} (2(c-i)-1)}}
=\frac{1}{q^{c^2}},$$
with the sum
\beg{TotalSwapCOmb}{\begin{array}{c}
\displaystyle \prod_{i=1}^a (q^{N-\ell+c} - q^{\lambda_i}) \prod_{i=1}^b (q^{N-\ell+c}+ q^{\mu_i})
+\\
\\
\displaystyle \prod_{i=1}^a (q^{N-\ell+c} +q^{\lambda_i}) \prod_{i=1}^b (q^{N-\ell+c} - q^{\mu_i}).
\end{array}
}
Since the defect $a-b$ is odd, when multiplying out the factors \rref{TotalSwapCOmb}
as a sum of powers of $q^{N-\ell +c}$ (with lesser coefficients, not involving
$N$), we find that only odd powers $q^{(2k+1)(N-\ell +c)}$ have non-zero coefficient
(for $k = 0, \dots , c$).
Explicitly, it is
$$2 q^{N-\ell+c} (\sum_{k=0}^c q^{2k (N-\ell+c)} \cdot \sum (-1)^{r} q^{\sum_{s=1}^r \lambda_{i_s}
+ \sum_{s=1}^{2(c-k)-r} \mu_{j_s}} )$$
where the second sum runs over all choices of $r$ 
and $1\leq i_1< \dots < i_r \leq a$, $1 \leq j_1< \dots < j_{2(c-k)-r} \leq b$.
Consider
$$\begin{array}{c}
2 q^{N-\ell +c} = q^c ((q^{N-\ell} -1) + (q^{N-\ell} +1))
= \\
\\
\displaystyle 
q^c ( \frac{| \text{Sp}_{2(N-\ell)} (\F_q)|_{q'}}{|\text{SO}_{2(N-\ell)}^+ (\F_q)|_{q'}} +
\frac{|\text{Sp}_{2(N-\ell)} (\F_q)|_{q'}}{|\text{SO}_{2(N-\ell)}^- (\F_q)|_{q'}}).
\end{array}$$

\vspace{3mm}

Redistributing terms, this can be re-expressed as the product of
$$\begin{array}{c}
\displaystyle (\frac{|\text{Sp}_{2N} (\F_q)|_{q'}}{|\text{SO}_{2\ell+1} (\F_q) \times \text{SO}_{2(N-\ell)}^+ (\F_q)|_{q'}} + 
\frac{|\text{Sp}_{2N} (\F_q)|_{q'}}{|\text{SO}_{2\ell+1} (\F_q) \times \text{SO}_{2(N-\ell)}^- (\F_q)|_{q'}}) \cdot\\
\\
\displaystyle \text{dim} ({\lambda_1< \dots < \lambda_a \choose \mu_1 < \dots < \mu_b})
\end{array}$$
with the fraction
\beg{OppositeFactor}{\begin{array}{c}
\displaystyle
\frac{\displaystyle q^c \cdot \sum_{k=0}^c q^{2k(N-\ell+c)}\cdot  \sum (-1)^r q^{\sum_{s=1}^r \lambda_{i_s} +
\sum_{s=1}^{2(c-k)-r} \mu_{j_s}} }{ \displaystyle q^{c^2} \cdot
\prod_{i=1}^c (q^{2(N-\ell+i)}-1) } = \\
\\
\displaystyle
\frac{\displaystyle \sum_{k=0}^c q^{2k(N-\ell+c)}\cdot  \sum (-1)^r q^{\sum_{s=1}^r \lambda_{i_s} +
\sum_{s=1}^{2(c-k)-r} \mu_{j_s}} }{\displaystyle \prod_{i=0}^{c-1} (q^{2(N-\ell+c)} -q^{2i})}.
\end{array}.}
In particular, the top degree of $q$ in both
the numerator and demoniator of \rref{OppositeFactor}
is $2c(N-\ell+c)$.
Finally, therefore \rref{OppositeFactor} reduces as $1$ (contributing the claimed main term),
summed with
$$\frac{\displaystyle 
\sum_{k=0}^{c-1} q^{2k (N-\ell+c)} \cdot (\sum  (-1)^r q^{\sum_{s=1}^r \lambda_{i_s}
+ \sum_{s=1}^{2(c-k)-r} \mu_{j_s}} - { c\choose k}_{q^2})
}{\displaystyle \prod_{i=0}^{c-1} (q^{2(N-\ell+c)} -q^{2i})},$$
recalling
$$\sum_{0 \leq \ell_1 < \dots \ell_{c-k} \leq c-1} q^{2 (\ell_1 + \dots + \ell_{c-k})} = {c \choose k}_{q^2}.$$

\end{proof}

The previous terms arise from spillover from previous levele $\ell '$ corresponding
to representations arising from semisimple elements at stage $\ell'$ with $-1$ an eigenvalue
of multiplicity $2 (\ell - \ell')$.
Again, summing obtains a full sum of sqares of representations of $\text{Sp}_{2(N-(m-\ell))} (\F_q)$.

Summing these error terms then gives
$$\begin{array}{c}
\displaystyle
(-1)^\ell \frac{|\text{Sp}_{2N} (\F_q)|_{q'}}{|\text{Sp}_{2(N-m+\ell )} (\F_q)|_{q'}} \frac{|\text{Sp}_{2m}(\F_q)|_{q'}}{
\text{Sp}_{2\ell}(\F_q ) \times \text{Sp}_{2(m -\ell)} (\F_q)|_{q'}} \\
\\
\displaystyle \frac{|\text{Sp}_{2(m-\ell)}(\F_q)|}{|\text{Sp}_{2(m-\ell)} (\F_q)|_{q'}} q^{N-(m-\ell)} \frac{|\text{Sp}_{2\ell} (\F_q)|}{|\text{Sp}_{2\ell} (\F_q)|_{q'}}
\end{array}
$$
which equals the $\ell$th term of \rref{Step2Expression}.
\end{proof}

\subsection{Modifications for even-dimensional orthogonal spaces}
\label{EvenCombSubsect}

In the two cases of $W$ with even dimension $\text{dim} (W) = 2m$, similar arguments for
Theorem \ref{DimsMatchTheoremEta} apply, with the following modifications:

\vspace{2mm}

\noindent {\bf Case 1: $B$ is totally split}
In this case, the orders of the parabolic quotients of
$\text{O}(W,B) = \text{O}_{2m}^+ (\F_q)$
are
$$|\text{O}_{2m}^+ (\F_q)/P_{B,k}| = {m \choose k}_q \cdot \prod_{j= m-k}^{m-1} (q^j +1)$$
for $k = 0, \dots , m$, again writing $P_{B,k}$ for the parabolic subgroup of $\text{O}(W,B)$
with Levi subgroup $\text{O}_{2(m-k)}^+ (\F_q) \times \text{GL}_k (\F_q)$.
Again, the dimension of can be directly computed by taking the dimension of \rref{EtaCorrThmDecomp}
and recursively computing. The analogue of \rref{TopDimFormula} then is
$$
\begin{array}{c}
\text{dim} ( \omega [ V\otimes W]^{\text{top}}) = \\[1ex]
\displaystyle \sum_{i=0}^m (-1)^{m-i}
q^{m-i \choose 2} {m \choose i}_q \cdot \prod_{j=i}^{m-1} (q^j+1) \cdot q^{2iN}
\end{array}$$

\vspace{5mm}

The second step of processing the dimension of the top part of the oscillator representation,
analogous to Proposition \ref{CombinEleOdd}, is
\beg{SplitCaseStep2Analogue}{\begin{array}{c}
\text{dim} ( \omega [ V\otimes W]^{\text{top}}) = \\[1ex]
\displaystyle
\sum_{\ell =0}^m (-1)^\ell  q^{\ell (\ell -1) + (m-\ell) (m-\ell -1)} {m \choose \ell}_{q^2} \frac{(q^{m-\ell} + q^\ell)}{(q^m+1)}
\prod_{j=0}^{m-\ell -1} (q^{2(N-j)} -1).
\end{array}
}

\vspace{5mm}

The significance of the coefficients in \rref{SplitCaseStep2Analogue} (similar to \rref{WhyStep2Express}) is
that for each $\ell$,
$$\begin{array}{c}
\displaystyle {m \choose \ell}_{q^2} \frac{(q^{m-\ell } + q^\ell)}{(q^m +1)} \prod_{i=0}^{m-\ell-1} (q^{2(N-j)} -1) =\\
\\
\displaystyle \frac{1}{2} \left( \frac{|\text{SO}_{2m}^+ (\F_q)|_{q'} \cdot |\text{Sp}_{2N} (\F_q)|_{q'}}{
|\text{SO}_{2\ell}^+ (\F_q)|_{q'}\cdot |\text{SO}_{2(m-\ell)}^+ (\F_q) |_{q'} \cdot |\text{Sp}_{2(N-m+\ell)} (\F_q)|_{q'}}- \right.\\
\\
\displaystyle \left. \frac{|\text{SO}_{2m}^+ (\F_q)|_{q'} \cdot |\text{Sp}_{2N} (\F_q)|_{q'}}{
|\text{SO}_{2\ell}^- (\F_q)|_{q'}\cdot |\text{SO}_{2(m-\ell)}^- (\F_q) |_{q'} \cdot |\text{Sp}_{2(N-m+\ell )} (\F_q)|_{q'}} \right) .
\end{array}
$$

\vspace{5mm}

\noindent {\bf Case 2: $B$ is not totally split.}
In this case, the order of the parabolic quotients of $\text{O}(W,B) = \text{O}_{2m}^- (\F_q)$
are
$$|\text{O}_{2m}^- (\F_q)/ P_{B,k}| = { m -1 \choose k}_{q} \cdot \prod_{j=m-k+1}^{m} (q^j +1),$$
for $k = 0, \dots , m-1$, again writing $P_{B,k}$ for the parabolic subgroup of
$\text{O}(W,B)$ with Levi subgroup $\text{O}_{2(m-k)}^- (\F_q) \times \text{GL}_k (\F_q)$.
The analogue of \rref{TopDimFormula} then is
$$\begin{array}{c}
\text{dim} ( \omega [V\otimes W]^{top}) =\\[1ex]
\displaystyle \sum_{i=1}^{m} (-1)^{m-i} q^{m-i \choose 2} {m-1 \choose i}_q 
\cdot \prod_{j=i+1}^m (q^j+1) \cdot q^{2iN}
\end{array} $$

\vspace{5mm}

Then the second step re-expresses \rref{Step2Expression} as
\beg{Step2AnalogueNonSplit}{\begin{array}{c}
\text{dim}( \omega [ V\otimes W]^{top}) =\\[1ex]
\displaystyle 
\sum_{\ell = 0}^{m-1} (-1)^\ell q^{\ell(\ell-1) + (m-\ell) (m-\ell -1)} {m \choose \ell}_{q^2}
\frac{(q^{m-\ell} - q^\ell)}{(q^m -1)} \prod_{j= 0}^{m-\ell -1} (q^{2(N-j)} -1)
\end{array}}

\vspace{5mm}

Similarly as in the non-split case, the $\ell$th factor of
\rref{Step2AnalogueNonSplit} can be interpreted by
$$
\begin{array}{c}
\displaystyle { m \choose \ell}_{q^2} \frac{(q^{m-\ell} -q^\ell)}{(q^m-1)} \prod_{i=0}^{m-\ell -1}
(q^{2(N-j)} -1) = \\
\\
\displaystyle \frac{1}{2} \left( \frac{|\text{SO}_{2m}^- (\F_q)|_{q'}  \cdot |\text{Sp}_{2N} (\F_q)|_{q'} }{|\text{SO}_{2(m-\ell)}^- (\F_q)|_{q'} \cdot |\text{SO}_{2\ell}^+ (\F_q)|_{q'} \cdot |\text{Sp}_{2(N-m+\ell)} (\F_q)|_{q'}} - \right. \\
\\
\displaystyle \left. \frac{|\text{SO}_{2m}^- (\F_q)|_{q'}  \cdot |\text{Sp}_{2N} (\F_q)|_{q'} }{|\text{SO}_{2(m-\ell)}^+ (\F_q)|_{q'} \cdot |\text{SO}_{2\ell}^- (\F_q)|_{q'} \cdot |\text{Sp}_{2(N-m+\ell)} (\F_q)|_{q'}} \right).
\end{array}
$$

\subsection{The case of the odd orthogonal stable range}\label{TopDimCombinSubSect}
The same argument as in the previous subsections also work for a choice of reductive
dual pair $(\text{Sp}(V), \text{O}(W,B))$ in the orthogonal stable range.
The same calculation as in
Proposition \ref{SympTopPartProp} also holds in this case.

\begin{proposition}\label{LemmaPartOddOrthoStabTopDim}
Consider symplectic and orthogonal spaces $V$ and $(W,B)$ whose dimensions
are in the orthogonal stable range.
The dimension of the top part of $ \omega [ V\otimes W]$ is
$$\begin{array}{c}
\text{dim} ( \omega[ V\otimes W]^{\text{top}}) = \\[1ex]
\displaystyle \sum_{i = 0}^N (-1)^{N-i} \cdot q^{N-i \choose
2}\cdot {N \choose i}_q \cdot  \prod_{j=i+1}^N (q^j+1) \cdot q^{i \cdot \text{dim} (W)}.
\end{array}$$
\end{proposition}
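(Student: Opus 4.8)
The plan is to run the same recursion argument as in the proof of Proposition \ref{SympTopPartProp}, but with the orthogonal-stable decomposition \rref{ZetaCorrThmDecomp} replacing \rref{EtaCorrThmDecomp}. Write $\dim(V)=2N$, and for $0\le j\le N$ let $Y_j$ denote the dimension of the \emph{top part} of $\omega[V[-(N-j)]\otimes W]$, where $V[-(N-j)]$ is the symplectic space of dimension $2j$. Each pair $(Sp(V[-(N-j)]),O(W,B))$ is still in the orthogonal stable range, since the range inequality $\dim(V')\le h_W$ only becomes easier to satisfy as $\dim(V')$ decreases; in particular $h_W\ge\dim(V)>N$, so in \rref{ZetaCorrThmDecomp} the bound $k\le h_W$ is never active and the outer sum effectively runs over $0\le k\le N$ (for $k>N$ there is no space $V[-k]$). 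By the definition of the top part in this section, $Y_j=\sum_{\rho\in\widehat{Sp(V[-(N-j)])}}\dim(\rho)\,\dim(\zeta^{W,B}(\rho))$.

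First I would take total dimensions on both sides of \rref{ZetaCorrThmDecomp}. Since $\epsilon(\det)$ is one-dimensional, $\dim Ind^{P_k^V}(\rho\otimes\epsilon(\det))=|Sp_{2N}(\F_q)/P_k^V|\cdot\dim(\rho)$, and the inner sum over $\rho\in\widehat{Sp(V[-k])}$ of $\dim(\rho)\dim(\zeta^{W,B}(\rho))$ is precisely $Y_{N-k}$; using that $\omega[V\otimes W]$ has dimension $q^{N\dim(W)}$ (as $V\otimes W$ is symplectic of dimension $2N\dim(W)$ and the oscillator representation of $Sp_{2d}(\F_q)$ has dimension $q^d$), this yields
\[
q^{N\dim(W)}\;=\;\sum_{k=0}^{N}|Sp_{2N}(\F_q)/P_k^V|\cdot Y_{N-k}.
\]
Next I would identify $|Sp_{2N}(\F_q)/P_k^V|$ with the number of $k$-dimensional isotropic subspaces of a $2N$-dimensional symplectic space, namely ${N\choose k}_q\prod_{j=N-k+1}^N(q^j+1)$; writing $i=N-k$ this equals ${N\choose i}_q\prod_{j=i+1}^N(q^j+1)=-C_{N,i}$ in the notation \rref{CCoefficientDefn}, with $N$ now playing the role of $m$. (This is exactly the same formula as the orthogonal parabolic-quotient orders $|O_{2m+1}(\F_q)/P^B_{m-i}|$ that produced the $C_{m,i}$ in the proof of Proposition \ref{SympTopPartProp}, which is why the two arguments coincide.) Isolating the $k=0$ term ($P_0^V=Sp_{2N}(\F_q)$) then gives the recursion
\[
Y_N\;=\;q^{N\dim(W)}+\sum_{i=0}^{N-1}C_{N,i}\cdot Y_i,
\]
which is precisely \rref{IterativeFormulaTopDim} after replacing $m$ by $N$ and $q^{(2i+1)N}$ by $q^{i\dim(W)}$.

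From here the computation is verbatim that of Proposition \ref{SympTopPartProp}: iterating the recursion expresses $Y_N$ as $\sum_{i=0}^N\bigl(\sum_{i=\ell_1<\dots<\ell_j=N}\prod_{k=1}^{j-1}C_{\ell_{k+1},\ell_k}\bigr)\,q^{i\dim(W)}$, and the bracketed coefficient sum — a statement purely about the $C$'s, indifferent to which powers of $q$ are attached — is evaluated, exactly as in \rref{TopDimCsLastStep} and the $q$-multinomial identity \rref{QMultinomThmClaim}, to $(-1)^{N-i}q^{N-i\choose 2}{N\choose i}_q\prod_{j=i+1}^N(q^j+1)$. Substituting back yields the claimed formula; note that no parity hypothesis on $\dim(W)$ is used, since $W$ enters only through the scalars $q^{i\dim(W)}$ (this is why the even- and odd-dimensional cases of $W$ can be handled together here, unlike on the orthogonal side). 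I expect no genuine obstacle: the only points needing care are (i) confirming that every intermediate pair $(Sp(V[-k]),O(W,B))$ with $0\le k\le N$ stays in the orthogonal stable range so that \rref{ZetaCorrThmDecomp} applies to each and the $Y_{N-k}$ genuinely are dimensions of top parts, and (ii) the isotropic-subspace count in $Sp_{2N}(\F_q)$, which is what makes the resulting coefficients agree with the $C_{m,i}$ already analysed in this section, reducing everything downstream to combinatorics that is already established.
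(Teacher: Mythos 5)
Your proposal is correct and is exactly the argument the paper intends: the paper's own ``proof'' of Proposition~\ref{LemmaPartOddOrthoStabTopDim} consists of the remark that ``the same calculation as in Proposition~\ref{SympTopPartProp} also holds in this case,'' and you have filled in the details of why the two recursions coincide (the symplectic parabolic quotient order $|Sp_{2N}(\F_q)/P^V_k|$ equals the same $q$-binomial expression $-C_{N,i}$ with $i=N-k$, the range inequality is preserved on passing to $V[-k]$, and the only change is $q^{(2i+1)N}\mapsto q^{i\,\dim(W)}$). Your observation that $\dim(W)$ enters only through those scalars, so no parity hypothesis is needed, matches the paper's parenthetical note verbatim.
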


\noindent(Note again that nothing in the statement or proof of Proposition
\ref{LemmaPartOddOrthoStabTopDim}
uses the parity of the dimension of $W$.)

Again, we process this further:

\begin{proposition}\label{PropFinalOddOrthoStabTopDim}
Consider symplectic and orthogonal spaces $V$ and $(W,B)$
The dimension of the top part of the oscillator representation $ \omega[V \otimes W]^{\text{top}}$
is
\beg{FinalOddOrthoTopDim}{\begin{array}{c}
\text{dim} ( \omega[ V\otimes W]^{\text{top}}) = \\[1ex] 
\displaystyle \sum_{\ell=0}^N (-1)^\ell \cdot q^{(N-\ell)^2+ \ell (\ell -1)} \cdot {N \choose \ell}_{q^2}\cdot
\prod_{i=0}^{N-\ell -1} (q^{2(m-i)}-1).
\end{array}
}
\end{proposition}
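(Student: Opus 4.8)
The plan is to derive the identity \rref{FinalOddOrthoTopDim} from the closed form of Proposition \ref{LemmaPartOddOrthoStabTopDim} by exactly the algebraic reorganization that passes from Proposition \ref{SympTopPartProp} to Proposition \ref{CombinEleOdd}. In the odd orthogonal stable case $\dim(W)=2m+1$, so the expression of Proposition \ref{LemmaPartOddOrthoStabTopDim} is a polynomial in $q^{\dim(W)}=q\cdot q^{2m}$, and after absorbing Gaussian powers into the visible products one may regard both it and the expression \rref{FinalOddOrthoTopDim} as polynomials in the single quantity $q^{2m}$ with coefficients in $\Z[q]$. The strategy is to write the two sides in this form and then, for each $k$ with $0\le k\le N$, to match the coefficient of $q^{2mk}$; each such comparison will turn out to be an instance of the $q^2$-binomial theorem, just as in the displays \rref{QMultinomThmClaim} and \rref{LastQBinnom}.

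Concretely, I would first reindex the sum of Proposition \ref{LemmaPartOddOrthoStabTopDim} by $\ell=N-i$, using $\binom{N}{i}_q=\binom{N}{\ell}_q$ and $q^{(N-\ell)(2m+1)}=q^{N-\ell}(q^{2m})^{N-\ell}$; this exhibits the coefficient of $q^{2m(N-\ell)}$ on that side directly as $(-1)^\ell q^{\binom{\ell}{2}+(N-\ell)}\binom{N}{\ell}_q\prod_{j=N-\ell+1}^N(q^j+1)$. On the side \rref{FinalOddOrthoTopDim} I would absorb the Gaussian contribution: since $(N-\ell)(N-\ell-1)=\sum_{i=0}^{N-\ell-1}2i$, one has $q^{(N-\ell)(N-\ell-1)}\prod_{i=0}^{N-\ell-1}(q^{2(m-i)}-1)=\prod_{i=0}^{N-\ell-1}(q^{2m}-q^{2i})$, so the $\ell$-th summand of \rref{FinalOddOrthoTopDim} becomes $(-1)^\ell q^{(N-\ell)+\ell(\ell-1)}\binom{N}{\ell}_{q^2}\prod_{i=0}^{N-\ell-1}(q^{2m}-q^{2i})$. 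Expanding $\prod_{i=0}^{r-1}(q^{2m}-q^{2i})=\sum_{j=0}^r(-1)^j q^{j(j-1)}\binom{r}{j}_{q^2}\,q^{2m(r-j)}$ (the $q^2$-binomial theorem) with $r=N-\ell$ turns \rref{FinalOddOrthoTopDim} into a double sum over $\ell$ and $j$; collecting by the exponent of $q^{2m}$, i.e.\ setting $k=N-\ell-j$, and then applying the trinomial revision $\binom{N}{\ell}_{q^2}\binom{N-\ell}{k}_{q^2}=\binom{N}{k}_{q^2}\binom{N-k}{\ell}_{q^2}$ reduces the comparison of the coefficients of $q^{2mk}$ to a single identity in the variables $N-k$ and $q$.

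That surviving identity, after also converting between Gaussian binomials in base $q$ and base $q^2$ via $\binom{n}{k}_q\prod_{j=n-k+1}^{n}(q^j+1)=\binom{n}{k}_{q^2}\prod_{j=1}^{k}(q^j+1)$ (the same conversion already used implicitly between \rref{TopDimFormula} and \rref{Substituim-elltopcomb}), is an alternating sum of the shape $\sum_\ell(-1)^\ell q^{(\text{quadratic in }\ell)}\binom{r}{\ell}_{q^2}$, which evaluates by the same device as in \rref{QMultinomThmClaim} and \rref{LastQBinnom}: specialize the $q^2$-binomial theorem for a product $\prod_j(1\pm q^j)$. No input beyond these standard $q$-binomial manipulations is required; one may also, as a sanity check, verify \rref{FinalOddOrthoTopDim} against Proposition \ref{LemmaPartOddOrthoStabTopDim} directly for small values of $N$.

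The main obstacle is purely the bookkeeping of $q$-exponents: keeping track of the Gaussian powers through the absorption step, through the reindexing $k=N-\ell-j$, and through each passage between base $q$ and base $q^2$, and then recognizing that the one-variable identity that remains really is an instance of the $q$-multinomial theorem rather than something genuinely new. It is worth recording, exactly as in \rref{WhyStep2Express}, that the prime-to-$q$ part of the $\ell$-th summand of \rref{FinalOddOrthoTopDim}, namely $\binom{N}{\ell}_{q^2}\prod_{i=0}^{N-\ell-1}(q^{2(m-i)}-1)$, is a ratio of orders of symplectic groups; this is precisely why this particular reorganization — rather than the raw form of Proposition \ref{LemmaPartOddOrthoStabTopDim} — is the one suited to the inductive argument of Section \ref{InductiveProof}.
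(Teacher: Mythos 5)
Your proposal is correct and follows essentially the same route the paper implies. The paper offers no explicit proof of Proposition \ref{PropFinalOddOrthoStabTopDim}: at the start of Subsection \ref{TopDimCombinSubSect} it says ``the same argument as in the previous subsections also work[s],'' referring to the proof of Proposition \ref{CombinEleOdd}. Your plan — reindex the closed form of Proposition \ref{LemmaPartOddOrthoStabTopDim} by $\ell=N-i$, absorb the Gaussian power $q^{(N-\ell)(N-\ell-1)}$ into the product to get $\prod_{i=0}^{N-\ell-1}(q^{2m}-q^{2i})$, expand by the $q^2$-binomial theorem, regroup by the exponent of $q^{2m}$ via $k=N-\ell-j$, apply trinomial revision, and use the base-$q$ to base-$q^2$ conversion $\binom{n}{k}_q\prod_{j=n-k+1}^n(q^j+1)=\binom{n}{k}_{q^2}\prod_{j=1}^k(q^j+1)$ — is exactly that argument, and each of these steps is sound.

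One small correction of detail: after collecting terms with $k=N-\ell-j$ fixed, the sign $(-1)^{\ell+j}=(-1)^{N-k}$ is constant, so the surviving one-variable identity is \emph{not} alternating. What actually remains, with $n=N-k$, is
$$\sum_{\ell=0}^{n} q^{(n-\ell)^2+\ell(\ell-1)}\binom{n}{\ell}_{q^2} \;=\; q^{\binom{n}{2}}\prod_{j=1}^{n}(q^j+1),$$
with all summands positive. (This is the same identity that underlies \rref{LastQBinnom}, whose displayed form in the paper carries a spurious $(-1)^k$ and a mis-stated exponent; the corrected version is the display above.) This does not affect your argument's validity; it just means the final evaluation is a positive $q$-binomial identity (equivalently, a form of the $q$-binomial theorem applied twice, or a symplectic/orthogonal group-order identity as in \rref{WhyStep2Express}), not an alternating one. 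With that adjustment, your proof carries through.
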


\vspace{5mm}

Denote the $\ell$th term of \rref{FinalOddOrthoTopDim} by
\beg{XellNMDefn}{X_{\ell} (N, m) :=  (-1)^\ell \cdot q^{(N-\ell)^2+ \ell (\ell -1)} \cdot {N \choose \ell}_{q^2}\cdot
\prod_{i=0}^{N-\ell -1} (q^{2(m-i)}-1).}
In particular, note that
\beg{TopLevelXTerm}{X_{\ell} (\ell, m) = (-1)^\ell \cdot q^{\ell (\ell-1)}}
does not depend on $m$.
Recalling that, for any rank $r$, the order of the symplectic and odd special orthogonal
group is
$$|\text{Sp}_{2r} (\F_q)| = |\text{SO}_{2r +1} (\F_q)| = q^{r^2} \prod_{i = 1}^r (q^{2i} -1),$$
we in fact find that
\beg{InductionBetweenXS}{
\begin{array}{c}
\displaystyle  q^{(N-\ell)^2 } \cdot {N \choose \ell}_{q^2}\cdot
\prod_{i=0}^{N-\ell -1} (q^{2(m-i)}-1) =|\text{Sp}_{2(N-\ell)} (\F_q)| \cdot \\
\\
\displaystyle 
\frac{|\text{Sp}_{2N} (\F_q)|_{q'}}{
|\text{Sp}_{2(N-\ell)} (\F_q) \times \text{Sp}_{2\ell} (\F_q)|_{q'}} \cdot \frac{|\text{SO}_{2m+1} (\F_q)|_{q'}}{|\text{SO}_{2(m-N+\ell)+1} (\F_q) \times \text{SO}_{2(N-\ell)+1} (\F_q)|_{q'}}.
\end{array}}
In particular, using \rref{TopLevelXTerm}, we find that
\beg{XellsOddtoOddFinalLevelRecursion}{\begin{array}{c}
X_{\ell} (N, m) = X_{\ell} (\ell, m-N+ \ell) \cdot |\text{Sp}_{2(N-\ell)} (\F_q)| \cdot \\
\\
\displaystyle 
\frac{|\text{Sp}_{2N} (\F_q)|_{q'}}{
|\text{Sp}_{2(N-\ell)} (\F_q) \times \text{Sp}_{2\ell} (\F_q)|_{q'}} \cdot \frac{|\text{SO}_{2m+1} (\F_q)|_{q'}}{|\text{SO}_{2(m-N+\ell)+1} (\F_q) \times \text{SO}_{2(N-\ell)+1} (\F_q)|_{q'}}.
\end{array}}

\vspace{5mm}

It remains to produce the terms $X_\ell (N, m)$ from the summands on
the right hand side of \rref{DimsForZetaIsPsi}.

We will recursively compute
\beg{RHSofCombinOdd}{\sum_{\rho \in \widehat{\text{Sp}(V)}} \text{dim} (\rho ) \cdot
\text{dim}(\psi_{V}^{W,B} (\rho))}
using a series of $N$ increasingly accurate approximations.
For $\ell = 0, \dots , N$, the ``level $\ell$" approximation will be equal to
$$X_0 (N,m ) + X_1 (N,m ) + \dots + X_\ell ( N, m),$$
and will correctly count the terms
\beg{TrueTerms}{\text{dim} (\rho ) \cdot \text{dim}(\psi_{V}^{W,B} (\rho))}
for $\rho$ with Lusztig data consisting of a conjugacy class of a semisimple
element $s \in \text{SO}_{2N+1} (\F_q)$ with eigenvalue $-1$
occuring with multiplicity less than or equal to $2\ell$.

\begin{definition}
Say a representation $\rho$ of a finite group of Lie type
{\em occurs at level $\ell$} if the conjugacy class $(s)$
of a semisimple element in its Lusztig data has eigenvalue $-1$ with multiplicity $2\ell$.
\end{definition}

The level $\ell$ approximation of \rref{RHSofCombinOdd} will also generate
some error terms that must be accounted for in approximations at later levels.
At level $\ell = N$, we will have used all previous levels' errors, and correctly counted
the contribution of every $\rho \in \widehat{\text{Sp}_{2N} (\F_q)}$.

First, we describe the level $0$ approximation of \rref{RHSofCombinOdd}.
Consider irreducible representations $r^{\text{Sp} (V)} [(s), u]$ where $s$ is a conjugacy
class of a semisimple element with no $-1$ eigenvalues. We
then have
\beg{No-1EigenvsPsiOdd}{Z_{\text{SO}_{2m+1} (\F_q)} (\psi (s))^\circ = (Z_{\text{Sp}_{2N} (\F_q)} (s)^\circ)^* \times \text{Sp}_{2(m-N)} (\F_q),}
$\psi (u) = \widetilde{u} \otimes 1$ (where $1$ 
denotes the trivial representation of $\text{Sp}_{2(m-N)} (\F_q)$). Therefore,
\beg{Level0InductionFactorTerms}{\text{dim} (\psi_{V}^{W,B} (\rho)) =
\frac{|\text{Sp}_{2m} (\F_q)|_{q'}}{|\text{Sp}_{2(m-N)} (\F_q) \times \text{Sp}_{2N} (\F_q)|_{q'}}  \text{dim} (\rho).}
We define the level $0$ approximation of \rref{RHSofCombinOdd}, by
imagining that \rref{Level0InductionFactorTerms} holds for every $\rho \in \widehat{\text{Sp}_{2N} (\F_q)}$,
giving
$$\sum_{\rho \in \widehat{\text{Sp}_{2N} (\F_q)}} \frac{|\text{Sp}_{2m} (\F_q)|_{q'}}{|\text{Sp}_{2(m-N)} (\F_q) \times \text{Sp}_{2N} (\F_q)|_{q'}}  \text{dim} (\rho)^2.$$ 
We can see that this is
$$\frac{|\text{Sp}_{2m} (\F_q)|_{q'}}{|\text{Sp}_{2(m-N)} (\F_q) \times \text{Sp}_{2N} (\F_q)|_{q'}} 
 |\text{Sp}_{2N} (\F_q)| = X_N (0, N).
$$
The error of the level $0$ approximation consits of two kinds of contributions
for $\rho$ occuring at level $1 \leq \ell \leq N$:
the ``true terms" \rref{TrueTerms},
and the negative of the ``faked terms added at level $0$," which are precisely
\beg{Level0FakedTerms}{-\frac{|\text{Sp}_{2m} (\F_q)|_{q'}}{|\text{Sp}_{2(m-N)} (\F_q) \times \text{Sp}_{2N} (\F_q)|_{q'}} \text{dim} (\rho)^2.}

\vspace{5mm}

Now let us consider the level $\ell$ approximation for $1 \leq \ell \leq N$.
For a representation $r^{\text{Sp} (V)}[s, u, \pm 1]$ occuring at level $\ell$, we have
$$Z_{\text{SO}_{2N+1} (\F_q)} (s)^\circ = H \times \text{SO}_{2\ell}^\pm (\F_q),$$
where we may consider $H$ as the identity component of the centralizer of a semisimple element $s'$, which is conjugate to 
the diagonalization of $s$ restricted away from the $2\ell$ coordinates
with eigenvalues $-1$, in $\text{SO}_{2(N-\ell)+1} (\F_q)$:
\beg{CentralizerLevellIntermediate}{H = Z_{\text{SO}_{2(N-\ell)+1} (\F_q)} (s')^\circ.}
The identity components of the centralizers of semisimple elements of $\text{SO}_{2(N-\ell) +1} (\F_q)$ which appear
as \rref{CentralizerLevellIntermediate} are precisely those with no factors of type $D$ or ${}^2D$
(since $s'$ by definition has no $-1$ eigenvalues).
Write a unipotent representation $u$ of $H \times \text{SO}_{2\ell}^\pm (\F_q)$ as
$$u = u_H \otimes u_{\text{SO}_{2\ell}^\pm (\F_q)},$$
for $u_H \in \widehat{H}_u$,
$u_{\text{SO}_{2\ell} ^\pm  (\F_q)} \in \widehat{\text{SO}_{2\ell}^\pm (\F_q) }_u$.
Then the sum of the true terms contributed by $r^{\text{Sp} (V)}[(s),u, +1]$ and 
$r^{\text{Sp} (V)}[(s),u, -1]$ is
the product of the ``induction factor"
\beg{IndFactorGeneralLevelLIntermediateH}{\frac{|\text{Sp}_{2N} (\F_q)|_{q'} \cdot |\text{Sp}_{2m} (\F_q)|_{q'}}{|H
\times \text{SO}_{2\ell}^\pm (\F_q)|_{q'} \cdot |H \times \text{SO}_{2(m-n+\ell) +1} (\F_q)|_{q'}}\text{dim} (u_H)^2}
with
$$\begin{array}{c}
\displaystyle \frac{\text{dim} (u_{\text{SO}_{2\ell}^\pm (\F_q)})}{2}
\cdot (\text{dim} (\psi^{+1} (u_{\text{SO}_{2\ell}^\pm (\F_q)}) + \psi^{-1} (u_{\text{SO}_{2\ell}^\pm (\F_q)})).
\end{array}$$
Now \rref{IndFactorGeneralLevelLIntermediateH} can be re-written as
\beg{LevelL}{
\begin{array}{c}
\displaystyle
\frac{|\text{Sp}_{2N} (\F_q)|_{q'} |\text{Sp}_{2m} (\F_q)|_{q'}}{|\text{SO}_{2(N-\ell) +1}(\F_q)
\times \text{SO}_{2\ell}^\pm (\F_q)|_{q'} }\cdot \vspace{1mm}\\
\displaystyle \frac{ |\text{Sp}_{2m} (\F_q)|_{q'}}{ |\text{SO}_{2(N-\ell) +1}(\F_q) \times \text{SO}_{2(m-N+\ell) +1} (\F_q)|_{q'}}\cdot \vspace{1mm} \\
\displaystyle \text{dim} (r^{\text{SO}_{2(N-\ell)+1} (\F_q)}[(s'), u_H])^2,
\end{array}}
where $r^{\text{SO}_{2(N-\ell)+1} (\F_q)} [(s'), u_H]$ denotes the irreducible representation of $\text{SO}_{2(N-\ell)+1} (\F_q)$
associated to the $\text{SO}_{2(N-\ell)+1} (\F_q)$-classification data of $[(s'), u_H]$.
We introduce ``faked terms occuring at level $\ell$" which consist of a product of
\rref{LevelL} with $X_{\ell} (\ell ,N)$.

\vspace{3mm}

Hence, by induction on $N$, this reduces
\rref{DimsForZetaIsPsi} to checking the ``highest level" of singularity, i.e.
find terms matching the $N$th term. The ``true" new representations obtained at level $N$
arise from Lusztig data
$$\textstyle [\sigma_m^\pm, {\lambda_1 < \dots < \lambda_a \choose \mu_1< \dots < \mu_b}, \pm 1],$$
recalling Definition \rref{SigmanDefn},
where ${\lambda_1 < \dots < \lambda_a \choose \mu_1< \dots < \mu_b}$
denotes a symbol
specifying a unipotent representation of $\text{SO}_{2N}^\pm (\F_q)$.

\begin{proposition}
The sum of the ``true" level $N$ terms
$$ \begin{array}{c}
\displaystyle \sum_{u \in \widehat{\text{SO}_{2N}^+ (\F_q)}_u} \text{dim} (r^{\text{Sp}_{2N} (\F_q)}[\sigma_N^+, u, \pm1]) \cdot \text{dim} (\psi_{V}^{W,B} (r^{\text{Sp}_{2N} (\F_q)}[\sigma_N^+, u, \pm1]))+
\\
\displaystyle \sum_{u \in \widehat{\text{SO}_{2N}^- (\F_q)}_u} \text{dim} (r^{\text{Sp}_{2N} (\F_q)}[\sigma_N^-, u, \pm1]) \cdot \text{dim} (\psi_{V}^{W,B} (r^{\text{Sp}_{2N} (\F_q)}[\sigma_N^-, u, \pm1])) 
\end{array}$$
(where we sum over both central signs where left ambiguous)
and every level $\ell$ error contribution for $1 \leq \ell \leq N-1$
to the $N$th level
$$(-1)^{\ell +1} \cdot \left(\sum_{u \in \widehat{\text{SO}_{2(N-\ell)}^+ (\F_q)}_u } \text{dim} (u)^2+ \sum_{u \in \widehat{\text{SO}_{2(N-\ell)}^- (\F_q)}_u } \text{dim} (u)^2\right) \cdot X_{N} (\ell , m)$$
is equal to
$$X_{N}(N,m) = q^{N (N-1)}.$$
\end{proposition}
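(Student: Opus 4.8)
The plan is to evaluate the two kinds of summands on the left-hand side explicitly and to check that together they reassemble into $X_N(N,m)$, using three ingredients: the elementary identity $\sum_{\rho\in\widehat{G}}\dim(\rho)^2=|G|$ applied to the relevant intermediate groups, the symbol-dimension lemma of this subsection in the form adapted to the odd orthogonal stable range, and the recursion \rref{XellsOddtoOddFinalLevelRecursion} together with the base value \rref{TopLevelXTerm}. First I would handle the true level $N$ terms. The representations occurring at level $N$ have Lusztig data $[\sigma_N^\pm, u, \pm 1]$ with $u$ a unipotent representation of $SO_{2N}^\pm(\F_q)$, since $Z_{SO_{2N+1}(\F_q)}(\sigma_N^\pm)=SO_{2N}^\pm(\F_q)$ is self-dual; hence $\dim(\rho_{[\sigma_N^\pm,u,\pm1]})=\tfrac12\dim(u)\cdot|Sp_{2N}(\F_q)|_{q'}/|SO_{2N}^\pm(\F_q)|_{q'}$. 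By the construction in Subsection \ref{OddOrthoStSubSect}, $\psi^{W,B}_V$ of such a representation is the sign-twist of the irreducible $SO_{2m+1}(\F_q)$-representation whose unipotent part is obtained from the symbol of $u$ by concatenating $m-N+(a+b)/2$ onto one of the two rows (the choice governed by the central sign), so its dimension is $\dim(\psi^\pm(u))$. Summing over the two central signs replaces $\dim(\psi^+(u))+\dim(\psi^-(u))$, via the Lemma, by a main term proportional to $\dim(u)$ plus the lower-order error term supplied by the Lemma; summing the main term over all rank-$N$ symbols of type $D$ and ${}^2D$ --- i.e. over all unipotent representations of $SO_{2N}^+(\F_q)$ and of $SO_{2N}^-(\F_q)$ --- and applying the sum-of-squares identity collapses it into $X_N(N,m)$.

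Next I would identify the error contributions from levels $1\le\ell\le N-1$, which are the faked terms introduced at level $\ell$ that still survive at the top level. By the bookkeeping set up before the Proposition, a representation occurring at level $\ell$ contributes, after the Lemma is applied to its $SO_{2\ell}^\pm$-factor and the induction factor \rref{LevelL} to the complementary $H$-part, a faked term equal to \rref{LevelL} times $X_N(\ell,\cdot)$; here I use that $X_N(\ell,\cdot)$ is independent of its second argument, which follows from \rref{TopLevelXTerm}. Summing \rref{LevelL} over all $u_H$ and all admissible $(s')$ --- which, by the induction on $N$ built into this subsection, ranges over all unipotent representations of the intermediate orthogonal groups $SO_{2(N-\ell)}^+(\F_q)$ and $SO_{2(N-\ell)}^-(\F_q)$ --- and invoking the sum-of-squares identity once more, each such contribution collapses to $(-1)^{\ell+1}\bigl(\sum_{u\in\widehat{SO_{2(N-\ell)}^+(\F_q)}_u}\dim(u)^2+\sum_{u\in\widehat{SO_{2(N-\ell)}^-(\F_q)}_u}\dim(u)^2\bigr)\,X_N(\ell,m)$, exactly the form asserted in the statement.

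Finally I would assemble the identity. Adding the main term from the first step to the contributions from the second and rewriting every $X_N(\ell,m)$ by means of \rref{XellsOddtoOddFinalLevelRecursion}, the whole left-hand side becomes $q^{N(N-1)}$ multiplied by an alternating sum of $q^2$-binomial coefficients, which telescopes to $1$ by the same $q$-multinomial identities already used in the proofs of Proposition \ref{CombinEleOdd} and of the Lemma above. By \rref{TopLevelXTerm} this equals $X_N(N,m)=q^{N(N-1)}$, which both proves the Proposition and completes the induction establishing \rref{DimsForZetaIsPsi} in the odd orthogonal stable case.

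The step I expect to be the main obstacle is the bookkeeping in the second one: one must carry along simultaneously the central sign on the input $Sp_{2N}(\F_q)$-representation, the central sign on the output $O_{2m+1}(\F_q)$-representation, and the $\pm$ of the various orthogonal forms through the reduction, so that after the sum-of-squares collapse the error terms land in precisely the stated shape; in particular the auxiliary evaluation of $\sum_u\dim(u)^2$ over the unipotent representations of $SO_{2r}^+(\F_q)$ together with those of $SO_{2r}^-(\F_q)$, in a form compatible with the final telescoping, is the technical crux. The symbol-dimension manipulations themselves are routine once the Lemma is in hand.
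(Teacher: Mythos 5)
Your proof follows essentially the same route as the paper: compute the true level-$N$ contributions via the symbol-dimension lemma, feed in the lower-level error contributions, collapse via the sum-of-squares identity $\sum_{\rho\in\widehat G}\dim(\rho)^2=|G|$, and finish with the base value \rref{TopLevelXTerm}. The ingredients you list are exactly those the paper uses, and the overall strategy is correct.

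One point deserves more care, and it is precisely the bookkeeping issue you flag as the crux but do not resolve. Your first step asserts that summing the main term over unipotent $u\in\widehat{SO_{2N}^{\pm}(\F_q)}_u$ alone ``collapses into $X_N(N,m)$'' by the sum-of-squares identity. But that identity gives $\sum_{\rho\in\widehat G}\dim(\rho)^2=|G|$ over \emph{all} irreducible representations of $G$, not only the unipotent ones, and the unipotent sum $\sum_{u\in\widehat G_u}\dim(u)^2$ is strictly smaller than $|G|$. The true level-$N$ terms contribute only a unipotent sum; what supplies the missing non-unipotent pieces is exactly the collection of error contributions from levels $1\le\ell\le N-1$, each carrying a factor $\sum_{u\in\widehat{SO_{2(N-\ell)}^{\pm}(\F_q)}_u}\dim(u)^2$. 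The paper's proof ends by producing $\tfrac{(-1)^N}{2}\bigl(|SO_{2N}^+(\F_q)|/|SO_{2N}^+(\F_q)|_{q'}+|SO_{2N}^-(\F_q)|/|SO_{2N}^-(\F_q)|_{q'}\bigr)$, and the step that turns the unipotent-only sums from all levels into this full-group-order expression is the Jordan-decomposition reassembly: every irreducible of $SO_{2N}^{\pm}(\F_q)$ arises from a unipotent of the centralizer of some semisimple element, and the intermediate groups $SO_{2(N-\ell)}^{\pm}(\F_q)$ enumerate exactly those centralizers after projecting away the $-1$-eigenspace. Your outline skips from ``main term collapses to $X_N(N,m)$'' to ``adding everything telescopes to $q^{N(N-1)}$'' without reconciling these two claims; the resolution is that the main term alone does \emph{not} equal $X_N(N,m)$, and the error contributions are not corrections on top of it but the remaining summands needed to complete the full-group sum. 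A secondary notational point: the independence you invoke from \rref{TopLevelXTerm} holds for $X_\ell(\ell,\cdot)$, i.e.\ when the subscript equals the first argument; the paper's own phrasing $X_\ell(\ell,N)$ in the description of faked terms is what you should be citing, not $X_N(\ell,\cdot)$.
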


\begin{proof}
First, suppose ${\lambda_1< \dots < \lambda_a \choose \mu_1< \dots < \mu_b}$ is
a non-degenerate symbol of $\text{SO}_{2N}^\pm (\F_q)$. Let us write
$$x:= N-m + \frac{a+b}{2}$$
Then the sum of dimensions
$$
\begin{array}{c}
\text{dim}_{\text{SO}_{2N+1} (\F_q)} ({\lambda_1< \dots < \lambda_a<x \choose \mu_1< \dots < \mu_b}) +
\text{dim}_{\text{SO}_{2N+1} (\F_q)} ({\lambda_1< \dots < \lambda_a \choose \mu_1< \dots < \mu_b < x})
\end{array}$$
is equal to the product of
\beg{}{\textstyle (q^N \pm 1)\cdot \text{dim}_{\text{SO}_{2N}^\pm (\F_q)} ( {\lambda_1< \dots < \lambda_a \choose \mu_1 < \dots < \mu_b})}
with the factor
\beg{FalseInductionFactor}{
\frac{\displaystyle \prod_{i=1}^a (q^{x} -q^{\lambda_i} ) \prod_{j=1}^b (q^{x}+ q^{\mu_j}) + \prod_{i=1}^a (q^{x}+ q^{\lambda_i}) \prod_{j=1}^b(q^{x} -q^{\mu_j})}{\displaystyle\prod_{i=1}^{(a+b)/2} (q^{x} -q^{i})(q^{x} +q^i)}.
}
The top $q$-degrees of the numerator and denominator of \rref{FalseInductionFactor}
clearly match, and are equal to $x(a+b)$, suggesting a cancellation with the corresponding ``level 0"
error term.
Our goal is to re-express the numerator of \rref{FalseInductionFactor} in terms of the previous
levels' error terms. To do this, we proceed inductively, replacing each error term's
$X_N(\ell, m)$ factor with the induction hypothesis for $X_{\ell} (\ell , m)$, multiplied
by \rref{InductionBetweenXS}.
This will give
$$\frac{(-1)^N}{2} \cdot \left( \frac{\sum_{\rho \in \widehat{\text{SO}_{2N}^+ (\F_q)} } \text{dim}(\rho)^2 }{|\text{SO}_{2N}^+ (\F_q)|_{q'}} + 
\frac{\sum_{\rho \in \widehat{\text{SO}_{2N}^- (\F_q)} } \text{dim}(\rho)^2}{|\text{SO}_{2N}^- (\F_q)|_{q'}} \right),$$
which is $(-1)^N q^{N (N-1)}$ since the $q$ part of the order of $|\text{SO}_{2N}^\pm (\F_q)|$ is $q^{N(N-1)}$.

\end{proof}

\subsection{Modifications for even orthogonal groups}\label{EvenCombinSubSect}

Now consider orthogonal spaces $W$ of even dimension $\text{dim} (W) =  2m$.
First, note that there is no distinction in the dimension of the top part depending on whether
the symmetric bilinear form on $W$ is completely split or not. Our replacement for the calculation of the
dimension of the top part is

\begin{proposition}
Suppose $\text{dim} (W) = 2m$, $\text{dim } (V)= 2N$. Then
\beg{EvenDimPropTopDim}{\begin{array}{c}
 \omega [ V\otimes W]^{\text{top}} =\\[1ex]
\displaystyle \sum_{\ell= 0}^N 
q^{\ell^2 + (N-\ell) (N-\ell -1)} {N \choose \ell}_{q^2} \cdot \prod_{i=1}^{N-\ell} (q^{2(m-N + \ell + i)} -1)
\end{array}}
\end{proposition}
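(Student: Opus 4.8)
The plan is to mirror the two-step derivation of Propositions~\ref{LemmaPartOddOrthoStabTopDim} and~\ref{PropFinalOddOrthoStabTopDim}, changing only the $q$-combinatorial bookkeeping so that it reflects the even orthogonal groups. First I would take the dimension of both sides of the decomposition~\rref{ZetaCorrThmDecomp} of Theorem~\ref{TotalHoweITheorem}. Since $\epsilon(det)$ is one-dimensional, $\dim\bigl(Ind^{P_k^V}(\rho\otimes\epsilon(det))\bigr)=|Sp_{2N}(\F_q)/P_k^V|\cdot\dim(\rho)$; the $k=0$ summand is exactly $\omega[V\otimes W]^{\text{top}}$, and the $k$-th summand contributes $|Sp_{2N}(\F_q)/P_k^V|\cdot\dim\bigl(\omega[V[-k]\otimes W]^{\text{top}}\bigr)$, because $\sum_{\rho\in\widehat{Sp(V[-k])}}\dim(\rho)\dim\bigl(\zeta^{W,B}_{V[-k]}(\rho)\bigr)$ is by definition the dimension of the top part of $\omega[V[-k]\otimes W]$. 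Using $\dim\bigl(\omega[V\otimes W]\bigr)=q^{N\cdot\dim(W)}=q^{2Nm}$ and the index $|Sp_{2N}(\F_q)/P_k^V|=\binom{N}{k}_q\prod_{j=N-k+1}^{N}(q^j+1)$ (the number of $k$-dimensional isotropic subspaces of $\F_q^{2N}$; here $0\le k\le N$, which is consistent with the orthogonal-stable hypothesis $N\le h_W$), one obtains a recursion of exactly the shape~\rref{IterativeFormulaTopDim}, with $N$ in place of $m$ and $q^{2Nm}$ in place of $q^{(2m+1)N}$.

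Solving this recursion by iterated substitution and a $q$-analogue of the multinomial theorem --- the verbatim computation of the proof of Proposition~\ref{SympTopPartProp}, equivalently an application of Proposition~\ref{LemmaPartOddOrthoStabTopDim} with $\dim(W)=2m$ (whose statement and proof, as noted there, are insensitive to the parity of $\dim(W)$) --- yields the raw closed form
$$\dim\bigl(\omega[V\otimes W]^{\text{top}}\bigr)=\sum_{i=0}^{N}(-1)^{N-i}\,q^{\binom{N-i}{2}}\binom{N}{i}_q\prod_{j=i+1}^{N}(q^j+1)\,q^{2im}.$$
It then remains to re-index and regroup this alternating sum into the form of the statement, exactly as in the passage from~\rref{TopDimFormula} to~\rref{Step2Expression} in the proof of Proposition~\ref{CombinEleOdd} and the analogous even-dimensional modifications of Subsection~\ref{EvenCombSubsect}. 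Concretely, I would substitute $i=N-\ell$, expand $\prod_{j=N-\ell+1}^{N}(q^j+1)$, and collect the coefficient of each power $q^{2jm}$; the $q$-binomial factorizations used there, such as $\binom{N}{N-k}_{q^2}\binom{N-k}{N-\ell}_{q^2}=\binom{N}{N-\ell}_{q^2}\binom{\ell}{k}_{q^2}$, collapse the inner sums to a $q$-Chu--Vandermonde identity. The regrouping is arranged so that the prime-to-$q$ part of the $\ell$-th summand, $\binom{N}{\ell}_{q^2}\prod_{i=1}^{N-\ell}(q^{2(m-N+\ell+i)}-1)$, equals
$$\frac{|Sp_{2N}(\F_q)|_{q'}\,|SO_{2m+1}(\F_q)|_{q'}}{|Sp_{2\ell}(\F_q)|_{q'}\,|Sp_{2(N-\ell)}(\F_q)|_{q'}\,|SO_{2(m-N+\ell)+1}(\F_q)|_{q'}},$$
while the remaining $q$-power $q^{\ell^2+(N-\ell)(N-\ell-1)}$ is $|Sp_{2\ell}(\F_q)|_{q}\cdot|SO^{\pm}_{2(N-\ell)}(\F_q)|_{q}$ --- precisely the form needed so that, in the inductive argument of the following section, the $\ell$-th term can be matched against $\sum_{\rho}\dim(\rho)\dim\bigl(\psi^{W,B}_V(\rho)\bigr)$ over the $\rho$ whose Lusztig datum has a semisimple part carrying a $+1$-eigenvalue of multiplicity $2\ell$ (the split/non-split ambiguity in $SO^{\pm}_{2(N-\ell)}$ washing out, as observed just before the statement).

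I expect the sole obstacle to be bookkeeping: pushing the $q$-powers and signs through the substitution $i=N-\ell$ so that the exponent $\ell^2+(N-\ell)(N-\ell-1)$ and the sign-free coefficients emerge exactly as written --- and, as a consistency check, confirming agreement with the raw closed form above at small $N$. Nothing beyond a $q$-multinomial identity is required.
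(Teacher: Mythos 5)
Your overall strategy --- take dimensions in \rref{ZetaCorrThmDecomp}, obtain the recursion in $N$ with coefficients $|Sp_{2N}(\F_q)/P_k^V|=\binom{N}{k}_q\prod_{j=N-k+1}^N(q^j+1)$, iterate and collapse via the $q$-multinomial theorem to reach Proposition~\ref{LemmaPartOddOrthoStabTopDim} with $\dim(W)=2m$, then re-index by $i=N-\ell$ --- is exactly the argument the paper uses for the analogous odd-case Propositions~\ref{SympTopPartProp}, \ref{CombinEleOdd}, \ref{LemmaPartOddOrthoStabTopDim}, and \ref{PropFinalOddOrthoStabTopDim}, and the paper clearly intends it to transfer verbatim to the even case. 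Your observation that Proposition~\ref{LemmaPartOddOrthoStabTopDim} is parity-independent is also correct, and the interpretation of the $\ell$th coefficient in terms of prime-to-$q$ parts of group orders is the right way to set up the induction that follows.

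There is, however, a genuine gap: you state that when you push the signs through the substitution $i=N-\ell$, ``the sign-free coefficients emerge exactly as written.'' They do not. The raw alternating sum cannot regroup into the sign-free expression \rref{EvenDimPropTopDim}; a factor $(-1)^\ell$ must appear in front of the $\ell$th summand, exactly as it does in the odd-case formulas \rref{Step2Expression} and \rref{FinalOddOrthoTopDim}, and as your own mimicry of the proof of Proposition~\ref{CombinEleOdd} would produce. The small-$N$ consistency check you propose would have caught this: at $N=1$ the recursion reads $q^{2m}=Y_1+(q+1)Y_0$ with $Y_0=1$, hence $Y_1=q^{2m}-q-1$; but the displayed sum in the statement evaluates at $N=1$ to $(q^{2m}-1)+q=q^{2m}+q-1$. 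Inserting $(-1)^\ell$ fixes the $\ell=1$ term to $-q$ and recovers the correct $q^{2m}-q-1$, and one can similarly check agreement at $N=2$. So the method you outline is sound and would prove the corrected identity $\dim\bigl(\omega[V\otimes W]^{\text{top}}\bigr)=\sum_{\ell=0}^N(-1)^\ell\,q^{\ell^2+(N-\ell)(N-\ell-1)}\binom{N}{\ell}_{q^2}\prod_{i=1}^{N-\ell}(q^{2(m-N+\ell+i)}-1)$, but it does not and cannot produce the statement as printed; you should flag the missing $(-1)^\ell$ rather than assert the sign cancellation occurs.
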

\noindent Write $Y_{\ell} (N,m)$ for the $\ell$th term of \rref{EvenDimPropTopDim}, replacing \rref{FinalOddOrthoTopDim}.

Let us suppose that the symmetric bilinear form on $W$ is completely split, i.e.
$\text{O}(W,B)= \text{O}_{2m}^+ (\F_q)$. (Again the non-split even case follows similarly.)
Consider a semisimple element $s$ of $\text{SO}_{2N+1} (\F_q)$ with $1$
as an eigenvalue of total multiplicity $2\ell +1$. Then the identity component of its centralizer is of the form
\beg{CentralizerOfSSEltForModifEvenSect}{Z_{\text{SO}_{2N+1} (\F_q)} (s)^\circ = H \times \text{SO}_{2\ell+1} (\F_q),}
for $H$ now denoting 
the identity component of a centralizer of a semisimple element with no $1$ eigenvalues
in an even special orthogonal group
(of either parity) $\text{SO}_{2(N-\ell)}^\pm (\F_q)$. Let us write $H \subseteq \text{SO}_{2(N-\ell}^\epsilon (\F_q)$.
Then
\beg{ModifEvenSSectPsiCentralizerSplitCase}{Z_{\text{SO}_{2m}^+ (\F_q)} (\psi (s))^\circ = H^* \times \text{SO}_{2(m-N +\ell)}^\epsilon (\F_q).}

Hence, inductively, the level $\ell$ approximation in this case has terms
equal to $Y_{\ell} (\ell, m)$, multiplied by ``inductive factor" equal to 
half of the sum of 
\beg{InductiveSplitOrthoLevelEllFactor}{\begin{array}{c}
\displaystyle |\text{SO}_{2(N-\ell)}^\epsilon (\F_q)| \frac{|\text{SO}_{2N+1} (\F_q)|_{q'}}{|\text{SO}_{2(N-\ell)}^\epsilon (\F_q) \times \text{SO}_{2\ell +1} (\F_q)|_{q'}}\\
\\
\displaystyle  \frac{|\text{SO}_{2m}^+ (\F_q)|_{q'}}{|\text{SO}_{2(N-\ell)}^\epsilon (\F_q) \times \text{SO}_{2(m-N+\ell)}^\epsilon (\F_q)|_{q'}}
\end{array}
}
over the two choices of $\epsilon = \pm $. Now \rref{InductiveSplitOrthoLevelEllFactor}
can be simplified as
\beg{SimplifiedInductiveFactorSplitOrtho}{\begin{array}{c}
\displaystyle q^{(N-\ell) (N-\ell -1) } \cdot {N \choose \ell}_{q^2} \cdot (q^m -1) \cdot \prod_{i =1}^{N-\ell -1} 
(q^{2(m-N+\ell +i)} -1) \cdot \\
\\
 (q^{N-\ell} + \epsilon 1) \cdot (q^{m-N+\ell} + \epsilon 1),
\end{array}}
and we further have
$$\frac{1}{2}((q^{N-\ell} + 1) \cdot (q^{m-N+\ell} +1) + (q^{N-\ell} -1) \cdot (q^{m-N+\ell} -1))
= q^m +1.$$
Therefore, the average of \rref{SimplifiedInductiveFactorSplitOrtho}
over the two choices of parity $\epsilon = \pm $ is
\beg{FinalSplitEvenOrtho}{q^{(N-\ell) (N-\ell -1)} \cdot {N \choose \ell}_{q^2} \prod_{i=1}^{N-\ell} (q^{2(m-N+\ell + i)} -1).}
Hence, considering \rref{EvenDimPropTopDim}, it remains to find
\beg{YTerms}{Y_{\ell} (\ell, m) = q^{\ell^2}.}
Finding these terms proceeds exactly similarly to in the case of odd-dimensional $W$, since
it is the $q$-part of the order of $\text{SO}_{2\ell+1}(\F_q)$.

\vspace{5mm}

In the case when the symmetric bilinear form on $W$ is not completely split, i.e.
$\text{O} (W,B) = \text{O}_{2m}^- (\F_q)$, if we have \rref{CentralizerOfSSEltForModifEvenSect},
then instead of \rref{ModifEvenSSectPsiCentralizerSplitCase}, we have
$$Z_{\text{SO}_{2m}^- (\F_q)} (\psi (s))^\circ = H^* \times \text{SO}_{2(m-N+\ell)}^{-\epsilon} (\F_q)$$
and therefore the inductive factor \rref{InductiveSplitOrthoLevelEllFactor}
is replaced by
\beg{InductiveNonSplitOrthoLevelEllFactor}{\begin{array}{c}
\displaystyle |\text{SO}_{2(N-\ell)}^\epsilon (\F_q)| \frac{|\text{SO}_{2N+1} (\F_q)|_{q'}}{|\text{SO}_{2(N-\ell)}^\epsilon (\F_q) \times \text{SO}_{2\ell +1} (\F_q)|_{q'}}\\
\\
\displaystyle  \frac{|\text{SO}_{2m}^- (\F_q)|_{q'}}{|\text{SO}_{2(N-\ell)}^\epsilon (\F_q) \times \text{SO}_{2(m-N+\ell)}^{-\epsilon} (\F_q)|_{q'}},
\end{array}
}
which is simplified as 
$$\begin{array}{c}
\displaystyle q^{(N-\ell) (N-\ell -1) } \cdot {N \choose \ell}_{q^2} \cdot (q^m +1) \cdot \prod_{i =1}^{N-\ell -1} 
(q^{2(m-N+\ell +i)} -1) \cdot \\
\\
 (q^{N-\ell} + \epsilon 1) \cdot (q^{m-N+\ell} - \epsilon 1).
\end{array}$$
Now we have
$$\frac{1}{2}((q^{N-\ell} + 1) \cdot (q^{m-N+\ell} -1) + (q^{N-\ell} -1) \cdot (q^{m-N+\ell} +1))
= q^m -1,$$
again simplifying the average of terms for different parities $\epsilon = \pm$ into \rref{FinalSplitEvenOrtho},
meaning that it remains to find the same terms \rref{YTerms}.

\section{An inductive argument}\label{InductiveProof}

In this section, we conclude the statement of Theorem \ref{EtaExplicitIntro}.
First, we note that the toral characters of the eta and zeta correspondence are determined
inductively, by examining the restriction of the oscillator representations to finite general linear groups.
This confirms that the semisimple and cetnral sign data of $\eta^V_{W,B} (\rho)$ (resp.
$\zeta^{W,B}_V (\rho)$) matches that of $\phi^V_{W,B} (\rho)$ (resp. $\psi^{W,B}_V (\rho)$).
This is treated in Subsection \ref{SemisimpleSignPartsMatch}.

It then remains in all cases to confirm the unipotent part of $\eta^V_{W,B} (\rho)$
(resp. $\zeta^{W,B}_V (\rho)$) matches that of $\phi^V_{W,B} (\rho)$ (resp. $\psi^{W,B}_V (\rho)$).
First, we prove Proposition \ref{NnRank}, and conclude that for $N>>n$, we have
\beg{DimEtaIsDimPhi}{\text{dim} (\eta^V_{W,B} (\rho)) = \text{dim} (\phi^V_{W,B} (\rho))}
(and similarly, for $n>>N$, we have 
\beg{DimZetaIsDimPsi}{\text{dim} (\zeta^V_{W,B} (\rho)) = \text{dim} (\psi^V_{W,B} (\rho))).}
We may view these dimensions as polynomials of $q^N$ (resp. $q^n$).
The results of \cite{TotalHoweI} can be used to see that in either stable range,
the idempotent in the endomorphism algebra picking out any
summand of the eta (resp. zeta) correspondence does not depend on $N$ (resp. $n$).
Therefore, we can apply the description from \cite{TotalHoweI} to see that \rref{DimEtaIsDimPhi}
and \rref{DimZetaIsDimPsi} both hold for any choice of $N$, $n$ in the symplectic and orthogonal
stable ranges. Therefore, since each unipotent representation corresponding to a different
symbol has a different dimension,
we find that our claimed construction is the only possible choice. Hence, we conclude Theorem
\ref{EtaExplicitIntro}.

For the remainder of this section, we restrict attention to the case of the eta correspondence and
$\phi^V_{W,B}$, since the case of the zeta correspondence and $\psi^{W,B}_V$ can be
done completely similarly.

\subsection{Determining the semisimple and sign data}\label{SemisimpleSignPartsMatch}
The purpose of this subsection is to prove that the semisimple part (and sign data)
of the $\text{Sp}(V)$-classification data of the representation obtained by applying an eta
correspondence $\eta^V_{W,B} (\rho)$ matches the semisimple part (and sign data)
of our constructed representation $\phi^V_{W,B} (\rho)$ (and the similar statement
for $\zeta^{W,B}_V$ and $\psi^{W,B}_V$).

Broadly, this can be concluded since, considering $\text{GL}_N (\F_q) \subseteq  \text{Sp}(V)$,
the restriction of the oscillator representation is
$$\text{Res}_{\text{GL}_N(\F_q)} ( \omega[V]) \cong \epsilon (det) \otimes \C \F_q^N .$$
Now we also have the restriction
$$\text{Res}_{\text{GL}(V)} ( \omega [V \otimes W])  \cong (\text{Res}_{\text{GL}(V)} ( \omega [V]))^{\otimes W}$$
where $\otimes W$ denotes a degree $\text{dim} (W)$
tensor product of oscillator representations $ \omega [V]$.
Since characters are matched exactly in the premutation representation factors,
for example in the case of odd-dimensional $W$, we know the underlying
toral character and the sign data.
We now restrict attention to
the case of comparing $\eta^V_{W,B}$ and $\phi^V_{W,B}$, for $(\text{Sp}(V), \text{O}(W,B))$
in the symplectic stable range. The case of comparing $\zeta^{W,B}_V$ and $\psi^{W,B}_V$
for the orthogonal stable range is similar.

\vspace{3mm}

\begin{proposition}\label{SameCharacters}
Suppose $(\text{Sp}(V), \text{O}(W,B))$ is in the symplectic stable range.
If $\text{dim} (W) = 2m+1$ is odd,
for $\rho$ an irreducible representation of $\text{SO}_{2m+1}(\F_q)$ arising from the
conjugacy class of a semisimple element $s \in \text{SO}_{2m+1} (\F_q)$
and a unipotent representation $u$ of the dual of the identity component of its centralizer, then in the
classification data of
$\eta^V_{W,B} ((\pm 1) \otimes \rho)$, its semisimple part is
$$(\phi^\pm (s)) = (s \oplus \sigma^\pm_{N-m}).$$
If $\text{dim} (W) = 2m$ is even, for $\rho$ an irreducible representation of $\text{O}_{2m}^\pm (\F_q)$ arising from 
an $\text{SO}_{2m}^\pm (\F_q)$-representation corresponding to the
conjugacy class of a semisimple element $s \in \text{SO}_{2m}^\pm (\F_q)$
and a unipotent representation $u$ of the dual of the identity component of
its centralizer, then in the Jordan
decomposition of
$\eta^V_{W,B} (\rho)$, its semisimple part is
$$(\phi (s)) = (s \oplus I_{2(N-m)+1}).$$ 
\end{proposition}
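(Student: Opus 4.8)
The plan is to determine the semisimple part (and, along the way, the central sign data) of $\eta^V_{W,B}(\rho)$ by restricting the oscillator representation to the Siegel Levi $GL(V) = GL_N(\F_q) \subseteq Sp(V)$, and combining this with the already-understood behaviour of the correspondences of \cite{TotalHoweI} on the $GL$- and unitary-type factors of centralizers. Using the recalled identities $Res_{GL(V)}(\omega[V]) \cong \epsilon(\det)\otimes\C\F_q^N$ and $Res_{GL(V)}(\omega[V\otimes W]) \cong (Res_{GL(V)}(\omega[V]))^{\otimes \dim W}$, one gets $Res_{GL(V)}(\omega[V\otimes W]) \cong \epsilon(\det)^{\otimes\dim W}\otimes\C[\F_q^N\otimes W]$, where $\C[\F_q^N\otimes W]$ is the permutation module of $GL(V)\times O(W,B)$ on $\F_q^N\otimes W$. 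Every $GL(V)$-constituent of this permutation module is unipotent, so $Res_{GL(V)}(\omega[V\otimes W])$ lies entirely in the Lusztig series of $GL_N(\F_q)$ attached to the central semisimple element that is $+I$ when $\dim W$ is even and the order-two element $-I$ when $\dim W$ is odd. Recalling that the semisimple part of the $Sp(V)$-Lusztig data of an irreducible $\pi$ is exactly its toral character, and that Harish-Chandra restriction along the Siegel parabolic sends $\mathcal E(Sp(V),(s'))$ into the series of $GL_N(\F_q)$ indexed by the images of $s'$, this restriction will detect the $\pm1$-eigenvalue content of the semisimple part of $\eta^V_{W,B}(\rho)$.

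I would then run an induction on $\dim W$ (equivalently on $m$), with small base cases checked directly. For the inductive step, restrict the decomposition \rref{EtaCorrThmDecomp} of Theorem \ref{TotalHoweITheorem}(1) to $GL(V)\times O(W,B)$. The terms with $k>0$ involve the correspondences $\eta^V_{W[-k],B[-k]}$ with $\dim W[-k] = \dim W - 2k < \dim W$, still in the symplectic stable range, so their semisimple parts are known by the inductive hypothesis, while the restrictions of the parabolically induced factors $Ind^{P_k^B}(\rho\otimes\epsilon(\det))$ are understood; equivalently one may isolate $k=0$ as the top-$N$-rank part via Proposition \ref{NnRank}. Subtracting these off leaves the $k=0$ summand $\bigoplus_{\rho\in\widehat{O(W,B)}}Res_{GL(V)}(\eta^V_{W,B}(\rho))\otimes\rho$, and by linear independence of the $\rho$ this recovers $Res_{GL(V)}(\eta^V_{W,B}(\rho))$ for each $\rho$, showing it is $\epsilon(\det)^{\otimes\dim W}$ tensored with a sum of unipotent $GL_N(\F_q)$-representations.

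From here I would read off the semisimple part of $\eta^V_{W,B}(\rho)$ in $SO_{2N+1}(\F_q) = Sp(V)^D$ from two complementary inputs. First, the eigenvalues of the semisimple part different from $\pm1$ must coincide with those of (the semisimple part of) $\rho$, because they correspond to $GL$-type and unitary-type factors of the centralizer, on which the correspondences of \cite{TotalHoweI} act as the identity on the semisimple datum of that factor. Second, the fact that $Res_{GL(V)}(\eta^V_{W,B}(\rho))$ is $\epsilon(\det)^{\otimes\dim W}$ tensored with unipotent content forces the multiplicity of the eigenvalue $-1$ in the semisimple part to be positive exactly when $\dim W$ is odd, and — together with the rank bookkeeping $\dim V = 2N$, $\dim W = 2m+1$ and the control on $rk_N$ from Proposition \ref{NnRank} — to be precisely $2(N-m)$, with a single additional eigenvalue $1$; that is, the new block is $\sigma^\pm_{N-m}$. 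When $\dim W = 2m$ is even the twist $\epsilon(\det)^{\otimes\dim W}$ is trivial, so no $-1$ is introduced and the new block is $I_{2(N-m)+1}$. The split-versus-non-split type of the new $SO_2$-blocks (hence the sign in $\sigma^\pm_{N-m}$) and the central sign data are simultaneously pinned down by $disc(B)$ through the explicit $\epsilon(\det)$-twist carried by the oscillator representation (equivalently by comparison with $Res_{O(W,B)}(\omega[V\otimes W]) = \epsilon(\det)\otimes\C W$), which is exactly why the constructed datum in \rref{NewLusztigClassDataOddSympSt} carries the sign $disc(B)\cdot\varepsilon(s)$.

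The main obstacle will be carrying out the disentanglement of the previous paragraph carefully when $s$ is already singular — i.e. when $s$ itself has $\pm1$ eigenvalues — so that the newly adjoined $\pm1$-block must be amalgamated with the pre-existing ones and the split type of the resulting $SO_{2\bullet}^\pm$ factor must come out correctly; and, relatedly, verifying that the Harish-Chandra/Lusztig-series bookkeeping on the $GL(V)$ side is exact rather than only up to uncontrolled series. One handles this by noting that once the non-$\pm1$ eigenvalues and the $\pm1$-multiplicities are fixed, the only remaining freedom is the finite data of the split types of the $\pm1$-blocks and the central signs, all of which are rigidified by $disc(B)$ and the $\epsilon(\det)$-twist; since the even-$W$ case introduces no $-1$ eigenvalues and has trivial twist, I would dispatch it first and then treat the odd-$W$ case by the same template with the extra sign bookkeeping, reaching $(\phi^\pm(s)) = (s\oplus\sigma^\pm_{N-m})$ and $(\phi(s)) = (s\oplus I_{2(N-m)+1})$ respectively.
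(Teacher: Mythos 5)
Your approach is genuinely different from the paper's. The paper fixes a maximal torus $(SO_2^\pm(\F_q))^m$ inside $SO(W,B)$, a character $\chi_{a_1}\otimes\dots\otimes\chi_{a_m}$ of it, and restricts $\omega[V\otimes W]$ along the chain $SO_2^\pm(\F_q)\times Sp(V)\subseteq SO(W,B)\times Sp(V)\subseteq Sp(V\otimes W)$ one torus factor at a time, invoking the rank-one decomposition of $\omega[V\otimes\F_q^2]$ from \cite{TotalHoweI} to pair $\chi_{a_i}$ with the induction from $\chi_{a_i}$ viewed as a torus character of $Sp(V)$; the leftover $N-m$ factors of $\epsilon$ then come from the permutation-module description of $Res_{GL_{N-m}(\F_q)}(\omega[V])$. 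You instead shrink the $Sp(V)$ side to the Siegel Levi $GL(V)$ while keeping $O(W,B)$ whole, and try to read off the semisimple datum from which Lusztig series the $GL(V)$-restriction lands in.

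The first step of your argument already fails, and the rest leans on it. You claim that ``every $GL(V)$-constituent of this permutation module is unipotent,'' so that $Res_{GL(V)}(\omega[V\otimes W])$ lies in a single (twisted-unipotent) Lusztig series of $GL_N(\F_q)$. This is false. Stratifying $\C[\F_q^N\otimes W]=\C[M_{N\times n}(\F_q)]$ by rank $r$, a $GL_N(\F_q)$-orbit of rank-$r$ matrices has stabilizer $\left(\begin{smallmatrix}I_r & * \\ 0 & GL_{N-r}\end{smallmatrix}\right)$, so the corresponding summand is $Ind_{P_r}^{GL_N}(\C[GL_r]\otimes 1)$, where $\C[GL_r]$ is the regular representation of $GL_r(\F_q)$. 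This contains, for instance, $Ind_{P_r}^{GL_N}(\chi(\det)\otimes 1)$ for every multiplicative character $\chi$ of $\F_q^\times$, which is far from unipotent when $\chi$ is nontrivial. (Even the simplest case $N=1$: $\C[\F_q^n]$ contains every character of $\F_q^\times$.) What is true, and what the paper's ``broad'' remark alludes to, is that the permutation module is a $GL_N(\F_q)\times O(W,B)$-\emph{bimodule} in which the characters appearing on the two sides are matched; to exploit this you would have to track the bimodule pairing, not just observe the $GL_N$-side in isolation. Your subsequent claim that the non-$\pm1$ eigenvalues of the output semisimple part coincide with those of $\rho$ is also asserted rather than derived (``the correspondences of \cite{TotalHoweI} act as the identity on the semisimple datum of that factor''), which is essentially the content of the proposition. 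The paper's restriction-through-a-torus-of-$SO(W,B)$ argument gives exactly this eigenvalue-matching as output of the rank-one Howe duality rather than as an input, which is why it succeeds where the Levi-restriction route, at least as you have set it up, does not.
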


\begin{proof}Suppose $\text{dim} (W) = 2m+1$.
Let us begin by considering
\beg{mSO2+}{\underbrace{\text{SO}_2^\pm (\F_q) \times \dots \times \text{SO}_2^\pm (\F_q)}_{m}}
as a torus of $\text{SO}(W,B)$. Fix a character
$$\chi_{a_1} \otimes \dots \otimes \chi_{a_m},$$
corresponding to $a_1, \dots , a_m \in \mu_{q\mp1} \cong \text{SO}_2^\pm (\F_q)$. Consider the maximal
parabolic subgroup with Levi \rref{mSO2+} (i.e. the Borel subgroup) $B(W,B)\subseteq \text{SO} (W,B)$.
Then, for an irreducible representation $\rho$ with this character, i.e.
$$\rho \subseteq \text{Ind}^{\text{O}(W,B)} (\chi_{a_1} \otimes \dots \otimes \chi_{a_m}),$$
we need to prove that
$\eta_{W,B} (\rho)$ corresponds to a toral character 
\beg{ClaimedSpVsCharacterCorr}{\chi_{a_1} \otimes \dots \otimes \chi_{a_m} \otimes (\epsilon)^{\otimes N-m}}
in $\underbrace{\text{SO}_2^\pm (\F_q) \times \dots \times \text{SO}_2^\pm (\F_q)}_{N}\subseteq \text{Sp}_{2N} (\F_q)$
(considering $\epsilon$ as the quadratic character of $\mu_{q\mp 1} = \text{SO}_2^\pm (\F_q)$.

\vspace{3mm}

Consider the inclusion of the product of this torus with $\text{Sp} (V)$
\beg{TorusAndSpVS}{\begin{array}{c}
\underbrace{\text{SO}_2^\pm (\F_q) \times \dots \times \text{SO}_2^\pm (\F_q)}_m \times \text{Sp} (V)
\subseteq \text{SO} (W,B) \times \text{Sp} (V) \\[1ex]
\subseteq \text{Sp} (V \otimes W).
\end{array}}
Pick the $i$th factor $\text{SO}_2^\pm (\F_q)$ in \rref{TorusAndSpVS}, taking the inclusion
\beg{deg2Inclusion}{\text{SO}_2^\pm (\F_q) \times \text{Sp}(V) \subseteq \text{Sp} (V \otimes W)}
Restricting $ \omega [ V \otimes W]$ along \rref{deg2Inclusion} gives a restriction
\beg{Degree2HoweDuality}{\text{Res}_{\text{SO}_2^\pm (\F_q) \times \text{Sp}(V)} ( \omega [ V \otimes \F_q^2]) \otimes \C^{q^{(2m-1)N}}}
considering $\F_q^2$ with the split and non-split symmetric bilinear form, respectively,
(and taking the trivial action on $\C^{q^{(2m-1)N}}$.
Recalling the results of \cite{TotalHoweI}, in each factor \rref{Degree2HoweDuality}, it decomposes as
a $\text{SO}_2^\pm (\F_q) \times \text{Sp}(V)$-representation pairing every $\chi_{a_i}$-type
$\text{SO}_2^\pm (\F_q)$-representation with a representation $\text{Sp}(V)$ in the induction
$$\text{Ind}_{\text{SO}_2^\pm (\F_q)} (\chi_{a_i}),$$
considering $\text{SO}_2^\pm (\F_q)$ as a factor of a torus in $\text{Sp}(V)$
Since this holds for every $i$, it also holds in the restriction of $ \omega [ V \otimes W]$
along \rref{TorusAndSpVS}: in
$$\text{Res}_{\text{SO}_2^\pm (\F_q) \times \dots \times \text{SO}_2^\pm (\F_q) \times \text{Sp} (V)} ( \omega [ V \otimes W]),$$
the character $\chi_{a_1} \otimes \dots \otimes \chi_{a_m}$ as a representation
of $\text{SO}_2^\pm (\F_q) \times \dots \times \text{SO}_2^\pm (\F_q)$
is paired with a representation of $\text{Sp}(V)$ in that character's induction,
viewing the copies of $\text{SO}_2^\pm (\F_q)$'s as blocks in a torus of $\text{Sp} (V)$.

The remaining factors of $\epsilon$ in \rref{ClaimedSpVsCharacterCorr} corresponding to the remaining $N-m$
factors in a torus of $\text{Sp}(V)$ arise since the restriction of $\text{Sp}(V)$ to a representation of 
$$\text{GL}_{N-m} (\F_q) \subseteq \text{GL} (\Lambda) \subseteq \text{Sp}(V)$$
is $\epsilon (det)$ tensored with a permutation representation.

\vspace{3mm}

A similar argument applies to both even-dimensional cases.
 
\end{proof}

\subsection{The proof of Propostion \ref{NnRank}}

The purpose of thie subsection is 
to prove Propostion \ref{NnRank} by induction.
Again, we restrict attention to the case of $N>>n$, since the
case of $n>>N$ is completely similar.

\vspace{3mm}

First, we begin by observing the following

\begin{lemma}\label{All2m+1NRank}
Fix $n$, and consider $N>>n$. Every irreducible representation of $\text{Sp}_{2N} (\F_q)$ with $N$-rank $n$ is
constructed
by applying
$\phi_{W,B}^V$ to an irreducible representation of $\text{O}(W,B)$ for $n$-dimensional orthogonal space $(W,B)$.
\end{lemma}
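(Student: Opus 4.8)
The plan is to prove Lemma \ref{All2m+1NRank} by a counting/dimension argument, leveraging the combinatorial identity (Theorem \ref{DimsMatchTheoremEta}) together with Theorem \ref{TotalHoweITheorem}. The key observation is that the decomposition \rref{EtaCorrThmDecomp} expresses $\mathrm{Res}(\omega[V\otimes W])$ as a sum over $k$ of parabolically induced pieces, and the ``top part'' $\omega[V\otimes W]^{\mathrm{top}}$ picks out precisely the $k=0$ summand, i.e. the summands $\eta^V_{W,B}(\rho)\otimes\rho$ with $\rho$ ranging over $\widehat{O(W,B)}$ for the full-dimensional orthogonal space. First I would establish that every $\eta^V_{W,B}(\rho)$ appearing in the top part has $N$-rank exactly $n$: this follows by computing $\deg_q\dim(\eta^V_{W,B}(\rho))$, which by Theorem \ref{DimsMatchTheoremEta} and the explicit formula \rref{FinalDimIrredGenForm} equals $\deg_q$ of the prime-to-$q$ index $|Sp_{2N}(\F_q)|_{q'}/|Z(\phi^\pm(s))|_{q'}$ plus $\deg_q\dim(u)$, and since $\phi^\pm(s)$ adds $\sim 2N$ eigenvalues $-1$ (for $W$ odd) or $\sim 2N$ eigenvalues $1$ (for $W$ even), the dominant term in $N$ is of degree exactly $nN + O(1)$, giving $rk_N = \lceil (nN + O(1))/N\rceil = n$ once $N \gg n$.

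The core of the argument is then a cardinality count. On one side, by Proposition \ref{NnRank}(\ref{N>>nPartOfRankProp}) — which this lemma is a step toward, so I must be careful not to circularly invoke it — I instead argue directly: I would count the set $S$ of irreducible $Sp_{2N}(\F_q)$-representations of $N$-rank $n$, and separately count the disjoint union $\bigsqcup_{(W,B)}\mathrm{Im}(\phi^V_{W,B})$ over the two isometry classes of $n$-dimensional $(W,B)$, and show these two counts agree. Disjointness of the images follows from Theorem \ref{EtaExplicitIntro}'s setup (the $\eta^V_{W,B}$ have mutually disjoint images by construction in \cite{TotalHoweI}, and $\phi = \eta$ is what we are proving — so here I would instead use that $\phi^V_{W,B}$ is injective and that its image is characterized by the semisimple data having the prescribed block of added $\pm 1$ eigenvalues of the right multiplicity, which by Proposition \ref{SameCharacters} matches, and these distinguish the two $(W,B)$). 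So the plan is: characterize the image of $\phi^V_{W,B}$ intrinsically as the set of $\rho_{(s),u,\alpha}\in\widehat{Sp_{2N}(\F_q)}$ whose semisimple part $s$ contains a block of $-1$'s (resp. $1$'s) of multiplicity $\geq 2(N-m)$ for $W$ odd of dimension $2m+1$ (resp. $\geq 2(N-m)$ for $W$ even), with the unipotent symbol in that factor having rank/defect in the appropriate range, and then verify every $N$-rank-$n$ representation satisfies exactly one such characterization.

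The main obstacle — and where I would spend the most care — is showing that $N$-rank $n$ \emph{forces} the semisimple part to have a large block of $\pm 1$ eigenvalues of the requisite multiplicity. This is a lower-bound argument: if $s$ had fewer than $2(N-m)$ eigenvalues equal to $-1$ and fewer than $2(N-m)$ equal to $1$, then the centralizer $Z_{SO_{2N+1}(\F_q)}(s)$ would be ``too small'' in the sense that $\deg_q(|Sp_{2N}(\F_q)|_{q'}/|Z_s|_{q'})$ would exceed $nN$, pushing the $N$-rank above $n$; conversely if the block is too large the rank drops below $n$. Making this precise requires estimating $\deg_q$ of the index as a function of the multiplicities and the field-extension data $(n_i', n_i'')$ of the generic eigenvalue blocks, and checking the unipotent contribution $\deg_q\dim(u)$ (bounded via \rref{SymbolDimTwoRowsBigFrac}) cannot compensate. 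Once this structural dichotomy is in place, the bijection is essentially bookkeeping: match $\rho\in\widehat{O(W,B)}$ to the representation obtained by deleting the forced $\pm 1$-block and shrinking the symbol — which is exactly the inverse of $\phi^V_{W,B}$ as constructed in Section \ref{LusztigClaimSect} — and conclude. I would close by noting that for the two choices of $(W,B)$ (split vs. non-split, determined by $\mathrm{disc}(B)$), the added $\sigma^\pm_{N-m}$ block differs precisely in the sign $\pm$, so the images are disjoint, and their union exhausts $S$.
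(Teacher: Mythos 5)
Your third paragraph correctly identifies the heart of the matter: a $q$-degree estimate showing that $N$-rank $n$ forces the Lusztig data to contain a large block of $\pm 1$ eigenvalues (equivalently, a large $D$-type or $B$-type factor in the centralizer of the semisimple part), together with a constraint on the symbol of the corresponding unipotent factor. This is exactly what the paper does: it reformulates the lemma as a structural claim that the centralizer must be of the form \rref{PhiWBCentra} with the symbol containing the maximal entry, then writes out $\mathrm{deg}_q$ of the index $|Sp_{2N}(\F_q)|_{q'}/|Z_{SO_{2N+1}(\F_q)}(s)|_{q'}$, isolates the $N$-linear terms, uses $\ell + \sum j_i + \sum k_i \leq m < N/2$ (the $N\gg n$ hypothesis) to rule out compensating contributions from the unitary and odd-orthogonal factors, and finally observes that \rref{DTypeFactorSymbol} contributes additional copies of $N$ unless cancelled by the denominator of \rref{SymbolDimTwoRowsBigFrac}, which happens precisely when the symbol has the prescribed maximal entry.

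That said, your first two paragraphs are detours you should remove. The lemma is a statement about $\phi^V_{W,B}$ alone, so the discussion of $\eta^V_{W,B}$ is not needed; more seriously, you cannot extract $\mathrm{deg}_q\dim(\eta^V_{W,B}(\rho))$ for an individual $\rho$ from Theorem \ref{DimsMatchTheoremEta}, which only gives the equality of dimension sums and says nothing about the individual summands (since we do not yet know the Lusztig data of $\eta^V_{W,B}(\rho)$, formula \rref{FinalDimIrredGenForm} cannot be applied to it). The cardinality-count framing is also a distraction: once the degree estimate forces the structure of the Lusztig data, surjectivity is immediate by inverting the symbol-concatenation, with no enumeration of $|S|$ against $|\bigsqcup_{(W,B)}\mathrm{Im}(\phi^V_{W,B})|$ required. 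Finally, your proposal only sketches the degree estimate; carrying it out — the paper's simplification of $\mathrm{deg}_q$ of the index to $N(1-2(m-p)) + (m-p)^2 + O(1)$ and the symbol-entry argument — is the actual content of the proof and is not bookkeeping.
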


\begin{proof}
First suppose $\text{dim} (W) = n =2m+1$.
Writing out the definition of $\phi_{W,B}$, we find that the statement is equivalent to the claim that
every irreducible representation of $\text{Sp}_{2N} (\F_q)$ of $N$-rank $2m+1$ arises from a
conjugacy class $(s)$ of a semisimple element of $\text{SO}_{2N+1} (\F_q)$ with 
the identity component of its centralizer expressible as
\beg{PhiWBCentra}{\prod_{i=1}^r U_{j_i}^+ (\F_q) \times \prod_{i=1}^t U_{k_i}^- (\F_q)\times
\text{SO}_{2\ell+1} (\F_q) \times \text{SO}_{2(N-m+p)}^\pm (\F_q)}
and a unipotent representation $u$ of this group,
whose $\text{SO}_{2(N-m+p)}^\pm (\F_q)$-representation tensor
factor $u_{\text{SO}_{2(N-m+p)}^\pm}$ corresponds to a symbol of the form
${\alpha_1< \dots < \alpha_a \choose \beta_1< \dots < \beta_b}$
such that either
$$\alpha_a = N-m+p + \frac{a+b-1}{2} \text{ or }\beta_b =N-m+p + \frac{a+b-1}{2}.$$

First note the prime to $q$ part of the group orders
$$|\text{SO}_{2N+1} (\F_q)|_{q'} = \prod_{i=1}^N (q^{2i}-1), \hspace{3mm}
|\text{SO}_{2\ell+1} (\F_q)|_{q'} = \prod_{i=1}^\ell (q^{2i} -1),$$
for the groups of type $B$
$$|\text{SO}_{2 (N-m+p)}^\pm (\F_q)|_{q'} = (q^{N-m+p} \mp 1)  \prod_{i=1}^{N-m+p-1} (q^{2i} -1),$$
for the group of type $D$,
and
$$|U_{k_i}^\pm (\F_q)|_{q'} = \prod_{u=1}^{k_i} (q^u -(\pm 1)^u) \text{ for } i = 1, \dots , t.$$

Therefore, the total top degree of $q$ in the quotient of prime
to $q$ parts of group orders \rref{indexSp2N} is
$$\sum_{i=1}^N 2i - (\sum_{i=1}^\ell 2i + (N-m+p) + \sum_{i=1}^{N-m+p-1} 2i + 
\sum_{i=1}^r \sum_{u=1}^{j_i} u + \sum_{i=1}^t \sum_{u=1}^{k_i} u),$$
which can be simplified as
\beg{TopQDegreeInProofOfOnly}{\begin{array}{c}
N(N+1) - (\ell (\ell+1) + (N-m+p)^2 +\\[1ex]
\displaystyle  \sum_{i=1}^r\frac{j_i (j_i+1)}{2} 
+\sum_{i=1}^t \frac{k_i (k_i+1)}{2}).
\end{array}}
The terms not involving $N$ (arising from $\text{SO}_{2\ell+1}(\F_q)$ and the unitary groups)
do not affect the $N$-rank of the final $\text{Sp}_{2N} (\F_q)$-representation, since
$$\ell +\sum_{i=1}^r j_i + \sum_{i=1}^t k_i \leq m < \frac{N}{2}.$$
The remaining terms of \rref{TopQDegreeInProofOfOnly} are
$$N \cdot (1-2(m-p)) + (m-p)^2.$$
Therefore, no smaller factor of type $D$ can occur than those allowed by \rref{PhiWBCentra}.

The condition on the symbol arises since otherwise the factor \rref{DTypeFactorSymbol} contributes
additional copies of $N$, 
unless it is cancelled by the denominator of \rref{SymbolDimTwoRowsBigFrac}, which can only occur
if the rank $N-m+p+ (a+b-1)/2$ occurs as an entry in the symbol itself.

\vspace{3mm}

A similar argument applies to even cases of $n = \text{dim} (W)$.

\end{proof}

The case of Proposition \ref{NnRank} for $N>>n$ then follows by induction.

\begin{proof}[Proof of Proposition \ref{NnRank}, part \rref{N>>nPartOfRankProp}]
First we consider the case of $W$ with odd dimensions,
and proceed by induction.
Suppose for every $m' < m$, we know that the disjoint union of the images
of the two eta correspondences $\eta^V_{W,B}$ such that $\text{dim} (W) = 2m'+1$
is exactly the set of all irreducible representations of $\text{Sp}_{2N} (\F_q)$ with $N$-rank
$2m'+1$, for $N>>m$.

Suppose $(W,B)$ forms an orthogonal space of dimension $2m+1$.
By the definition of $\eta^V_{W,B}$, the sum
$$\bigoplus_{\rho \in \widehat{\text{O} (W,B)}} \rho \otimes \eta_{W,B}^V (\rho)$$
is the top summand of $ \omega [V \otimes W]$. In particular, its dimension
less than or equal to
$$\text{dim} ( \omega) = q^{(2m+1) N},$$
so
all $\text{Sp}_{2N} (\F_q)$-representations of higher $N$-rank cannot occur
in the image of $\eta^V_{W,B}$.
Additionally, the images of the different $\eta$-correspondences are all disjoint.
Therefore, by the induction hypothesis, no irreducible representations of lesser odd $N$-rank 
may occur in the image of $\eta_{W,B}$.

\vspace{3mm}

To conclude Theorem \ref{EtaExplicitIntro}, note that the pairing $\phi_{W,B}$
obtains the maximal possible dimension
$$\text{dim} (\bigoplus_{\rho \in \text{O}(W,B)} \rho \otimes \phi_{W,B} (\rho)).$$
If a representation of $\text{O}(W,B)$ were paired
by $\eta_{W,B}$ with a $\text{Sp}_{2N} (\F_q)$-representation
of lesser $N$-rank, 
it would waste dimensions in
$$\text{dim} (\bigoplus_{\rho \in \widehat{\text{O} (W,B)}} \rho \otimes \eta_{W,B} (\rho)),$$
which would be impossible to get back, by Theorem \ref{DimsMatchTheoremEta}, since no other
representations of $N$-rank $2m+1$ exist by Proposition \ref{All2m+1NRank}.
\end{proof}

\subsection{Concluding Theorem \ref{EtaExplicitIntro}}\label{FinalArgumentSubSect}
In this subsection, we first conclude
that for every $\rho \in \widehat{\text{O}(W,B)}$, 
\beg{DimEtaIsDimPhi}{\text{dim} (\eta^V_{W,B} (\rho)) = \text{dim} (\phi^V_{W,B} (\rho)).}
for $V$ of dimension $2N$ and $W$ of dimension $n$, with $N>>n$.
In our construction, for a fixed choice of $(W,B)$ and $\rho$, for every $N\geq n$,
the dimension of our constructed
representation $\phi^V_{W,B} (\rho)$ for $\text{dim} (V) = 2N$ can be expressed as a polynomial
of $q^N$ (see \rref{PolynomialPhiDim} below).
On the other hand, we recall the results of \cite{TotalHoweI}, which allow us to consider
the eta correspondence on the level of idempotents. By the stable description of
the endomorphism algebra of an oscillator representation given in \cite{TotalHoweI},
we also know the dimensions of $\eta^V_{W,B} (\rho)$ for a fixed $\rho$ and $(W,B)$
must be polynomial in $q^N$.
Therefore, \ref{DimEtaIsDimPhi} must in fact hold for every $N \geq n$.
Combining this with the results of the previous subsection, we conclude that
$$\eta^V_{W,B} (\rho) = \phi^V_{W,B} (\rho),$$
since all symbols have different dimensions.

\vspace{3mm}

First, combining Proposition \ref{SameCharacters}, Proposition \ref{NnRank}, and Theorem \ref{DimsMatchTheoremEta}
allows us to conclude \rref{DimEtaIsDimPhi} for $N>> n$:
Our construction $\phi^V_{W,B}$ satisfies the condition that, for representations
$\rho, \pi \in \widehat{\text{O}(W,B)}$ such that $\text{dim} (\rho) < \text{dim} (\pi)$, we have
$$\text{dim} (\phi^V_{W,B} (\rho))< \text{dim} (\phi^V_{W,B} (\pi)).$$
Therefore, $\phi^V_{W,B}$ is an injective correspondence
from 
which maximizes the dimension sum
$$
\sum_{\rho \in \widehat{\text{O}(W,B)}} \text{dim} (\rho) \cdot \text{dim} (\phi_{W,B} (\rho)),
$$
which we know numerically matches with
$$\sum_{\rho \in \widehat{\text{O}(W,B)}} \text{dim} (\rho) \cdot \text{dim} (\eta_{W,B} (\rho))$$
by Theorem \ref{DimsMatchTheoremEta}.
Therefore, for $N>>n$, we must have that the dimensions of $\eta_{W,B}^V (\rho)$
match the dimensions of $\phi_{W,B}^V (\rho)$. It remains to prove that this holds for
every $N\geq n$, from which we can conclude that the unipotent parts of their classification
data agree in general. 
We do this now, concluding Theorem \ref{EtaExplicitIntro}, art \rref{SympStabSide}.
The proof of Part \rref{OrthoStabSide} is similar, using the analogue of Proposition \ref{SameCharacters}
for the zeta correspondence, and the orthogonal stable cases of
Proposition \ref{NnRank}, and Theorem \ref{DimsMatchTheoremEta}.

\begin{proof}[Proof of Theorem \ref{EtaExplicitIntro}, part \rref{SympStabSide}]
We restrict attention to the case of $W$ odd dimensional. The even dimensional case
proceeds similarly. 
Fix an orthogonal space $(W,B)$ of dimesion $n = 2m+1$,
and fix an irreducible representation $\rho$ of $\text{O}(W,B)$. Considering
$\text{O}(W,B) = \Z/2 \times \text{SO}_{2m+1} (\F_q)$, write $\rho$ as a tensor product
$$\rho =r^{\text{O} (W,B)} [(s),u]^\alpha$$
for $\alpha$ denoting a sign specifying a $\Z/2$-action, and $[(s), u]$
denoting the $\text{SO}_{2m+1} (\F_q)$-classification data corresponding to the restriction of
$\rho$ to $\text{SO}_{2m+1} (\F_q)$. 
Let us consider the symbol ${\lambda_1< \dots < \lambda_a \choose \mu_1< \dots < \mu_b}$
associated to the factor of $u$ corresponding to the $-1$ eigenvalues of $s$, as in the construction
of $\phi^V_{W,B} (\rho)$.
Recall the notation $N_\rho' = N-m + \frac{a+b-1}{2}$.
For every $V$ of dimension $2N$ with $N \geq 2m+1$, the dimension of $\phi^V_{W,B} (\rho)$
is then equal to 
\beg{PolynomialPhiDim}{
\frac{\displaystyle \text{dim} (\rho) \cdot \prod_{i=N_\rho' +1}^N (q^{2i} -1) \cdot
\prod_{i=1}^a (q^{N_\rho'}+ \alpha \cdot  q^{\lambda_i}) \cdot
\prod_{i=1}^b (q^{N'_\rho}- \alpha \cdot q^{\mu_i})}{2 \cdot q^{(a+b-1)(a+b+1)/4} \cdot |\text{SO}_{2m+1} (\F_q)|_{q'}}.
,}
which is a polynomial expression applied to $q^N$.

On the other hand, let us consider the values of $\text{dim} (\eta^V_{W,B} (\rho))$
for $V$ of dimension $2N$ as a function of $N$. We recall the description of endomorphism
algebra of $\omega [ V\otimes W]$ over $\text{Sp}(V)$ given in Section 2 of \cite{TotalHoweI}:
Considering the Schr\"{o}dinger model of the oscillator representation, there is an isomorphism
between the endomorphism algebra and the space of $\text{Sp}(V)$-fixed points in $\C (V\otimes W)$
\beg{EndIsOrbits}{(\text{End}_{\text{Sp}(V)} (\omega [ V\otimes W]), \circ) \cong (\C (V\otimes W)^{\text{Sp}(V)}, \star),}
where $\star$ is defined by
$$(v_1 \otimes w_1) \star (v_2 \otimes w_2) = \psi (\frac{S(v_1, v_2) \cdot B(w_1, w_2)}{2}) \cdot (v_1 \otimes w_1 + v_2 \otimes w_2)$$
(here $\psi$ denotes the non-trivial additive character corresponding to $1 \in \F_q^\times$,
under our identification of $\F_q$ with its Pontrjagin dual).
To consider the eta correspondence $\eta^V_{W,B}$, in \cite{TotalHoweI}
we consider $\omega [ V\otimes W]$ as a degree $\text{dim} (W)$ tensor product of oscillator
representations $\omega_{a_1} [V] \otimes \dots \otimes \omega_{a_n} [V]$ (considering
$B$ to be equivalent to the symmetric bilinear form corresponding to a diagonal
matrix with entries $a_1, \dots ,a_n$). This essential corresponds to writing out $V\otimes W$
as a direct sum of $n$ copies of $V$. Therefore we also view \rref{EndIsOrbits} as describing
\beg{EndTensorOms}{\text{End}_{\text{Sp}(V)} (\omega_{a_1} [V] \otimes \dots \otimes \omega_{a_n} [V])).}
We note that as long as $N \geq n$, the right hand side of \rref{EndIsOrbits}, as an algebra,
is stable and does not depend on $N$.
Therefore the same linear combination of $n$-tuples of $V$ vectors 
in the right hand side of \rref{EndIsOrbits} describes the idempotent
with image $\eta^V_{W,B} (\rho)$ for any choice of $N \geq n$. 
In particular, the dimension of $\eta^V_{W,B} (\rho)$ (expressible as
the trace of this idempotent in \rref{EndTensorOms}
for $V$ of dimension $2N$, is also polynomial in $q^N$, since, considering
one tensor factor at a time, trace of a linear combination
of $V$-vectors $(v)$ as an endomorphism of $\omega_{a_i} [V]$ is computed according to
$$tr ((v)) = \begin{cases}
0 \text{ if } v \neq 0\\
q^N \text{ if } v= 0
\end{cases}.$$
Hence, since this polynomial agrees with the polynomial \rref{PolynomialPhiDim}
for infinitely many values i.e., when applied to $q^N$ for $N$ large enough, they must in fact
always agree. 
Therefore, we obtain \rref{DimEtaIsDimPhi} for every $N \geq n$. 

Combining this with the results of the previous subsection which confirm
that the semisimple and sign parts of the classification data for 
$\eta^V_{W,B} (\rho)$ and $\phi^V_{W,B} (\rho)$ always match, we obtain that the unipotent
parts must match also (since every symbol has a different dimension). Therefore, we obtain that
$$\eta^V_{W,B} (\rho) = \phi^V_{W,B} (\rho),$$
by Lusztig's parametrization of irreducible representations, as claimed.

\end{proof}

\section{An Explicit Example: The case of $\text{SL}_2 (\F_q)$}\label{SL2Sect}

Consider, for example, the case of $N = 1$ (i.e. $\text{Sp}_2 (\F_q) = \text{SL}_2 ( \F_q)$), for $n = 2m+1$.
The oscillator representation $\omega [ \F_q^2]$ is $q$-dimensional, and decomposes along
the central $\Z/2$-action into pieces
$$\omega [ \F_q^2] = \omega^+ [\F_q^2] \oplus \omega^- [\F_q^2]$$
of dimension $(q+1)/2$, $(q-1)/2$, respectively.
Applying Lemma \ref{LemmaPartOddOrthoStabTopDim}
and Proposition \ref{PropFinalOddOrthoStabTopDim} above
gives that the top part of $ \omega [ \F_q^2 \otimes W]$ has dimension
$$q^{2m+1} - (q+1) = q \cdot (q^{2m}-1) -1.$$

Consider representations $\rho$ of $\text{SL}_2 (\F_q)$. The irreducible representations
are patametrized classification data consists
of the data of a conjugacy class of a semisimple element 
$$s \in \text{SO}_3 (\F_q) = \text{SL}_2^*(\F_q),$$
a unipotent representation $u$ of $(Z_{\text{SO}_3 (\F_q)} (s)^\circ)^*$,
and an additional choice of sign when $s$ has $-1$ eigenvalues.
There are $(q-3)/2$, resp. $(q-1)/2$, conjugacy classes $(s)$ (corresponding to having eigenvalues
$\{ \lambda, \lambda^{-1} \} \subseteq \mu_{q-1}\smallsetminus \{\pm1\}$, resp. $\mu_{q+1} \smallsetminus\{\pm 1\}$) with
$$(Z_{\text{SO}_3 (\F_q)} (s)^\circ)^* = U_1^+ (\F_q), \text{ resp. } U_1^- (\F_q),$$
whose only unipotent representation is trivial, and whose corresponding $\text{SL}_2 (\F_q)$-representation
then has dimension
$$\text{dim} (\rho^{\text{Sp}_2 (\F_q)}[s, 1]) = q+1 , \text{ resp. } q-1.$$
There is a single choice of semisimple conjugacy class
$(\sigma_1^\pm)$ each with 
the identity component of its centralizer isomorphic to 
$Z_{\text{SO}_3 (\F_q)} (s)^\circ = \text{SO}_2^\pm (\F_q)$
(corresponding to having eigenvalue $-1$ with multiplicity two, with sign determined by
the placement of the last eigenvalue $1$, depending on the presentation of the form defining
$\text{SO}_3 (\F_q)$), which again has only the trivial unipotent representation, giving representations
of dimension
$$\text{dim} (r^{\text{Sp}_2 (\F_q)}[\sigma_1^\pm, 1, +1]) = \text{dim} (
r^{\text{Sp}_2 (\F_q)}[\sigma_1^\pm, 1, -1]) = (q\pm 1)/2.$$
Finally, only $(s) = (I)$ has (the identity component of) its centralizer isomorphic the full $\text{SO}_3 (\F_q)$, which has two non-trivial unipotent
representations corresponding to symbols ${1 \choose \emptyset}$, ${0<1 \choose 1}$, of dimesions
$1$ and $q$, respectively.

We call the $s$ with no $-1$ eigenvalues the ``level $0$" choices. Call the other choices
of $s$ the ``level $1$" choices.
For the level $0$ choices of $s$, we have that
$$Z_{\text{Sp}_{2m} (\F_q)} (\psi (s))^\circ = (Z_{\text{SO}_3 (\F_q)} (s)^\circ)^* \times \text{Sp}_{2(m-1)} (\F_q),$$
with $\psi (u)$ defined as the representation corresponding to $u$ of the first factor, tensored with the
trivial representation of $\text{Sp}_{2(m-1)} (\F_q)$.
We assign the central sign describing the action of
of according to the discriminant of the form on $W$ and the quadratic
character
This fully describes $\zeta (r^{\text{Sp}_2 (\F_q)}[(s), 1])$ for the level $0$ $s$, and we find
$$\begin{array}{c}
\displaystyle \text{dim} (\zeta (r^{\text{Sp}_2 (\F_q)}[(s),1])) =\\
 \displaystyle \text{dim} (r^{\text{Sp}_2 (\F_q)}[(s),1]) \frac{|\text{SO}_{2m+1} (\F_q)|_{q'} }{|\text{Sp}_{2(m-1)}(\F_q)|_{q'} \cdot |\text{SO}_3 (\F_q)|_{q'}} =\\
\displaystyle \text{dim} (r^{\text{Sp}_2 (\F_q)}[(s),1]) 
\frac{q^{2m} -1}{q^2 -1}
\end{array}$$

For both level $1$ choices of $s$ (in this case, precisely $(s) = (\sigma_1^\pm)$), we have
$Z_{\text{Sp}_{2m} (\F_q)} (\psi (s)) = \text{Sp}_{2m} (\F_q),$
and we need to assign two choices of unipotent representations in both cases of the sign.
We alter the trivial representation of $\text{SO}_2^+ (\F_q)$ (corresponding to the symbol 
$1 \choose 0$ of rank 1, type $D$) by adjoining the coordinate $m$ to obtain the two choices
of symbols 
$$\textstyle { 1< m \choose 0}, \hspace{3mm} {1 \choose 0 <m},$$
describing unipotent representations of $\text{SO}_{2m+1} (\F_q)$.
Similarly, we alter the trivial representation of $\text{SO}_2^- (\F_q)$ (corresponding to the symbol 
${0<1 \choose \emptyset}$ of rank 1, type ${}^2 D$) by adjoining the cooedinate $m$
to obtain the two choices of symbols
$$\textstyle {0<1<m \choose \emptyset}, \hspace{3mm} {0<1 \choose m}.$$
Therefore, for level $1$ representations of $\text{SL}_2(\F_q)$, we have
$$\begin{array}{c}
\displaystyle \text{dim} (\zeta(r^{\text{Sp}_2 (\F_q)}[\sigma_1^+, 1, \pm1]) ) = \frac{(q^m \pm 1)(q^m\mp q)}{2 (q -1)}\\
\\
\displaystyle \text{dim} (\zeta(r^{\text{Sp}_2 (\F_q)}[\sigma_1^-, 1, \pm1]) ) = \frac{(q^m \pm 1)(q^m\pm q)}{2 (q +1)}.
\end{array}
$$

We may now apply our general combinatorial argument, but this case is small enough to verify directly.
Indeed, we can explicitly write out
$$
\begin{array}{c}
\displaystyle \sum_{\rho \in \widehat{\text{SL}_2(\F_q)}} \text{dim} (\rho) \cdot \text{dim} (\zeta (\rho))=\\
\\
$\resizebox{0.9\textwidth}{!}{$\displaystyle \frac{q-3}{2} \cdot \frac{(q+1)^2 (q^{2m}-1)}{q^2-1}+\frac{q-1}{2} \cdot \frac{(q-1)^2 (q^{2m}-1)}{q^2-1}+ \frac{(1+q^2)(q^{2m}-1)}{q^2-1}$}$\\
\\
\displaystyle + \frac{(q+1)(q^m+1)(q^m-q)}{4(q-1)}+ \frac{(q+1)(q^m-1)(q^m+q)}{4(q-1)}\\
\\
\displaystyle + \frac{(q-1)(q^m+1)(q^m+q)}{4(q+1)}+ \frac{(q-1)(q^m+1)(q^m+q)}{4(q+1)}
\end{array}
$$
(the first row corresponds to the level $0$ $\rho$, the second row corresponds to $\rho$ from $(s) = (\sigma_1^+)$, and the third row corresponds to $\rho$ from $(s) = (\sigma_1^-)$),
and verify that it equals $q(q^{2m}-1) -1$.

\end{document}